\documentclass[12pt,a4paper]{amsart}
\usepackage{a4}
\usepackage{amsfonts, amssymb}
\usepackage[all]{xy}

\usepackage[hidelinks]{hyperref}          
\usepackage{todo}


\newtheorem{thm}{Theorem}

\newtheorem{lem}{Lemma}

\newtheorem{conj}{Conjecture}
\newtheorem{defn}{Definition}

\newtheorem{prop}{Proposition}
\newcommand{\E}{\ensuremath{\mathbb E}}

\numberwithin{equation}{section}
\numberwithin{lem}{section} \numberwithin{problem}{section}
\numberwithin{claim}{section} \numberwithin{defn}{section}

\newcommand{\ex}{\text{ex}}

\newcommand{\Ll}{\mathcal{L}}

\newcommand{\Z}{\mathbb{Z}}

\newcommand{\F}{\mathbb F}

\renewcommand{\Pr}[1]{\mathbb{P}\left (  #1 \right )}
\newcommand{\Esp}[1]{\mathbb{E}\left (  #1 \right )}

\newcommand{\ud}{\,\mathrm{d}}
\parskip 2mm

\begin{document}
\title[On extremal problems in $\Ll$-free sets]{On sets free of sumsets with summands of prescribed size}
\begin{abstract}
We study extremal problems about sets of integers that do not contain sumsets with summands of prescribed size. We analyse both finite sets and infinite sequences. We also study the connections of these problems with extremal problems of graphs and hypergraphs.
\end{abstract}
\author{Javier Cilleruelo}
\author{Rafael Tesoro}
\address{Javier Cilleruelo\\ Instituto de Ciencias Matem\'aticas (CSIC-UAM-UC3M-UCM) and
Departamento de Matem\'aticas\\
Universidad Aut\'onoma de Madrid\\
28049, Madrid, Espa\~na}
\address{Rafael Tesoro\\ Departamento de Matem\'aticas\\
Universidad Aut\'onoma de Madrid\\
28049, Madrid, Espa\~na}

\maketitle

\section{Introduction}
A popular topic in combinatorial/additive number theory is the study of extremal sets of integers free of subsets with some given particular shape.
We tackle here extremal problems about sets that do not contain sumsets with summands of prescribed size, and we show their relationship with extremal problems on graphs that are free of complete $r$-partite subgraphs.

\begin{defn}\label{def} Let $r, \ell_1,\ldots,\ell_r$ be integers with $r\ge 1$ and $2\le \ell_1\le \cdots \le \ell_r$. Given an abelian group $G$ we say that  $A\subset G$ is a $\mathcal L^{(r)}_{\ell_1,\dots,\ell_r}$-free set if $A$ does not contain any sumset of the form  
$$L_1+\cdots+L_r=\{\lambda_1+\cdots+\lambda_r:\ \lambda_i\in L_i, \ i=1,\dots, r\},$$ with  $|L_i|=\ell_i,\ i=1,\dots,r$.
For $r=2$ we simply write $\mathcal L_{\ell_1,\ell_2}$. 
\end{defn}
The degenerate case $r=1$, that we denote by $\mathcal L_{\ell_1},$  is trivial: a set $A$ is $\mathcal L_{\ell_1}$-free $\iff |A| \le \ell_1-1.$



\subsection{$\mathcal L$-free sets problems in intervals and finite abelian groups}
To motivate Definition \ref{def} and the results in this work we start by summarizing the state of knowledge for some particular cases already studied in the literature.

\begin{enumerate}
  \item [i)]\emph{$\Ll_{2,2}$-free sets.} They are just the Sidon sets, those having the property that all the differences $a-a'\; ( a,a'\in A,\ a\ne a')$ are distinct.
Indeed take $L_1=\{a_1,b_1\},\ L_2=\{a_2,b_2\}, \: (a_i 
< b_i)$, then the shape of the sumset $L_1+L_2$ can be depicted as one $2$-point set plus one of its translates:

\medskip

$\qquad \qquad \qquad \qquad \bullet$\textbf{{-----}}$\bullet \qquad \qquad \qquad \qquad \bullet$\textbf{{-----}}$\bullet$

{\tiny $\qquad \qquad \qquad  \qquad {\tiny a_1+a_2}\qquad {\tiny b_1+a_2}\qquad \ \  \qquad \qquad {\tiny a_1+b_2}\qquad {\tiny b_1+b_2}\qquad $}

\medskip

A Sidon set can be characterized as being free of this shape.
      
      \

  \item [ii)]\emph{$\Ll_{2,\ell}$-free sets.} A $\Ll_{2,\ell}$-free set $A$ is characterized by the property that there are no more than $\ell-1$ different ways to express any non-zero element in the ambient group as a difference of two elements of $A$. They have been called $B_2^{\circ}[\ell-1]$ sets \cite{Le} and $B_2^{-}[\ell-1]$ sets \cite{GTV}.

For example the typical shape of a sumset $L_1+L_2$ with $|L_1|=2$ and $|L_2|=3$  is one $2$-point set plus two translates of it:

      \medskip

        $\qquad \bullet$---$\bullet \qquad \qquad \bullet$---$\bullet\qquad \qquad \qquad \qquad \bullet$---$\bullet$

           \medskip
           
The $\Ll_{2,3}$-free sets are characterized as being free of this shape.

\

  \item [iii)]\emph{$\mathcal L_{\ell_1,\ell_2}$-free sets.} The sets that are free of $\ell_1$ translations of sets with $\ell_2$ elements were introduced by Erd\H{o}s and Harzheim \cite{E-H} and have been further studied in \cite{PTT}. For example the $\mathcal L_{3,4}$-free sets are characterized by avoiding the following shape:

      \medskip

   $\bullet$--$\bullet$- - -$\bullet \qquad \qquad\bullet$--$\bullet$- - -$\bullet\qquad \qquad \qquad \bullet$--$\bullet$- - -$\bullet$ $\quad $ $\bullet$--$\bullet$- - -$\bullet$

           \

  \item [iv)]\emph{$\mathcal L^{_{(r)}}_{2,\dots,2}$-free sets.} A Hilbert cube of dimension $r$ is a sumset of the form $L_1+\cdots +L_r$ with $|L_1|=\cdots=|L_r|=2$. Thus  $\mathcal L^{_{(r)}}_{2,\dots,2}$-free sets are those free of Hilbert cubes of dimension $r$. A Hilbert cube of dimension $3$ has this shape:

      \medskip

      $\qquad \bullet$--$\bullet$- - -$\bullet$--$\bullet \qquad \qquad \qquad \bullet$--$\bullet $- - -$  \bullet$--$\bullet$
\end{enumerate}


\ 

Estimating the largest size of a set $A\subset \{1,\dots,n\}$ that is $\mathcal L^{_{(r)}}_{\ell_1,\dots,\ell_r}$-free is an interesting and significant problem.

\smallskip

\begin{defn}
We will denote by $F\big(n,\Ll^{_{(r)}}_{\ell_1,\dots,\ell_r}\big )$ the size of a largest $\Ll^{_{(r)}}_{\ell_1,\dots,\ell_r}$-free set in the interval $\{1,\dots,n\}$.
%
\end{defn}
Our first result is a general upper bound that recovers known upper bounds for  the particular cases considered above.
\begin{thm}\label{thm:main-ubound}For any $r\ge 2$ and $ 2\le \ell_1\le \cdots \le \ell_r$ we have
$$\text{F}\big (n,\mathcal L^{_{(r)}}_{\ell_1,\dots,\ell_r} \big )\le (\ell_r-1)^{\frac 1{\ell_1\cdots \ell_{r-1}}}n^{1-\frac{1}{\ell_1\cdots \ell_{r-1}}}+O\left (n^{\frac 12+\frac 1{2\ell_{r-1}}-\frac 1{\ell_1\cdots \ell_{r-1}}}\right ).$$
\end{thm}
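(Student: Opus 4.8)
The plan is to convert the $\mathcal{L}^{(r)}_{\ell_1,\dots,\ell_r}$-free condition into a Tur\'an-type condition for an $r$-uniform $r$-partite hypergraph avoiding a complete $r$-partite subhypergraph, and then to prove the bound --- essentially a quantitative, asymmetric form of the hypergraph K\H{o}v\'ari--S\'os--Tur\'an (Erd\H{o}s box) estimate --- by an iterated convexity argument. Let $A\subseteq\{1,\dots,n\}$ be $\mathcal{L}^{(r)}_{\ell_1,\dots,\ell_r}$-free with $|A|=k$; we may assume $k\ge n^{1-1/(\ell_1\cdots\ell_{r-1})}$, as otherwise there is nothing to prove. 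Fix a \emph{window} parameter $m$, to be chosen at the end, and build the $r$-partite $r$-uniform hypergraph $H$ with parts $W_1=\cdots=W_{r-1}=\{1,\dots,m\}$ and $W_r$ an interval of $N:=n+(r-1)(m-1)$ consecutive integers, where $\{w_1,\dots,w_{r-1},x\}$ (one vertex from each part) is an edge precisely when $w_1+\cdots+w_{r-1}+x\in A$. Two points are then routine: (i) the length of $W_r$ has been chosen so that each $a\in A$, paired with each $(w_1,\dots,w_{r-1})\in W_1\times\cdots\times W_{r-1}$, gives exactly one edge and conversely, whence $e(H)=km^{r-1}$ \emph{with no boundary loss}; and (ii) a copy of $K^{(r)}_{\ell_1,\dots,\ell_r}$ in $H$ (with $\ell_i$ vertices in $W_i$) would produce sets $L_i$ with $|L_i|=\ell_i$ and $L_1+\cdots+L_r\subseteq A$, which is forbidden, so $H$ contains no such subhypergraph.

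Next I would double-count
\[
\Sigma:=\#\bigl\{(L_1,\dots,L_{r-1},x):\ L_i\subseteq W_i,\ |L_i|=\ell_i,\ x\in W_r,\ (L_1+\cdots+L_{r-1})+x\subseteq A\bigr\}.
\]
By (ii), for every fixed $(L_1,\dots,L_{r-1})$ at most $\ell_r-1$ values of $x$ can occur, so $\Sigma\le(\ell_r-1)\prod_{i=1}^{r-1}\binom{m}{\ell_i}$. For the matching lower bound I would peel off the small parts one at a time, in the order $W_1,\dots,W_{r-1}$, so that $W_{r-1}$ --- the part attached to the largest of $\ell_1,\dots,\ell_{r-1}$ --- is peeled last, applying at each step a standard convexity inequality for $\binom{\cdot}{\ell_i}$; this is legitimate because the relevant average co-degrees stay far above $\ell_i$ throughout. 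Writing $\Sigma_j$ for the partial count in which $W_1,\dots,W_j$ have been replaced by their $\ell_i$-element subsets, one gets $\Sigma_j\ge\bar I_{j-1}\binom{\Sigma_{j-1}/\bar I_{j-1}}{\ell_j}$ for an explicit normalising count $\bar I_{j-1}$, and iterating from $\Sigma_0=e(H)=km^{r-1}$ to $\Sigma_{r-1}=\Sigma$, while retaining the lower-order factors, gives
\[
\Sigma\ \ge\ \frac{m^{\ell_1+\cdots+\ell_{r-1}}\,N}{\ell_1!\cdots\ell_{r-1}!}\Bigl(\frac kN\Bigr)^{\ell_1\cdots\ell_{r-1}}\bigl(1-O(n^{1/\ell_{r-1}}/m)\bigr).
\]

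Comparing the two estimates for $\Sigma$ --- the factor $m^{\ell_1+\cdots+\ell_{r-1}}/(\ell_1!\cdots\ell_{r-1}!)$ cancels --- yields $k^{\ell_1\cdots\ell_{r-1}}\le(\ell_r-1)\,N^{\ell_1\cdots\ell_{r-1}-1}\bigl(1+O(n^{1/\ell_{r-1}}/m)\bigr)$; taking $\ell_1\cdots\ell_{r-1}$-th roots and using $N=n+O(m)$ gives
\[
k\ \le\ (\ell_r-1)^{1/(\ell_1\cdots\ell_{r-1})}\,n^{\,1-1/(\ell_1\cdots\ell_{r-1})}\ +\ O\bigl(m\,n^{-1/(\ell_1\cdots\ell_{r-1})}\bigr)\ +\ O\bigl(n^{\,1-1/(\ell_1\cdots\ell_{r-1})+1/\ell_{r-1}}/m\bigr).
\]
Balancing the two error terms forces $m$ of order $n^{1/2+1/(2\ell_{r-1})}$ --- which is both $o(n)$ and much larger than $n^{1/\ell_{r-1}}$, as the argument requires --- and then each of them is $O\bigl(n^{1/2+1/(2\ell_{r-1})-1/(\ell_1\cdots\ell_{r-1})}\bigr)$, exactly the remainder in the statement. (For $r=2$ the peeling has a single step, recovering the classical K\H{o}v\'ari--S\'os--Tur\'an estimate.)

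The cheap ingredients are the exact count $e(H)=km^{r-1}$, the trivial upper bound on $\Sigma$, and the final optimisation in $m$; the step I expect to be the main obstacle is the lower bound on $\Sigma$. There one must peel \emph{in the right order} and, crucially, propagate the secondary terms through all $r-1$ steps: a crude version of this estimate --- equivalently, a naive induction on $r$ that simply substitutes the already-proved bound for $r-1$ into the peeling --- loses a genuine power of $n$ in the error term. The bookkeeping behind the relative-error bound $O(n^{1/\ell_{r-1}}/m)$ is exactly what produces the stated exponent: this error is dominated by the last peeling step, where the relevant average co-degree is of order $m\,(k/N)^{\ell_1\cdots\ell_{r-2}}$ and hence, by the assumption $k\ge n^{1-1/(\ell_1\cdots\ell_{r-1})}$, at least of order $m\,n^{-1/\ell_{r-1}}$.
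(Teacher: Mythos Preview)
Your proposal is correct and reaches the stated bound, but the route differs from the paper's. The paper argues by induction on $r$: it first establishes the crude estimate $F_r\ll n^{1-1/(\ell_1\cdots\ell_{r-1})}$ (their Lemma~3.1), then proves a one-step recursion
\[
F\big(n,\mathcal L^{(r)}_{\ell_1,\dots,\ell_r}\big)\ \le\ n^{1-1/\ell_1}\,F\big(n,\mathcal L^{(r-1)}_{\ell_2,\dots,\ell_r}\big)^{1/\ell_1}+O\big(n^{1/2-1/(2\ell_1\cdots\ell_{r-1})}\big)
\]
(their Lemma~3.2), obtained by a single application of the overlapping inequality with a window $[0,m]$ and an optimisation of $m$ that \emph{already uses} the crude bound on $F_{r-1}$; the theorem then follows by substituting the inductive hypothesis for $F_{r-1}$ into this recursion. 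So the paper peels one coordinate at a time and recurses, whereas you build the full $r$-partite auxiliary hypergraph once and carry out all $r-1$ Jensen steps in a single pass, optimising a single window parameter $m$ at the very end.

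What each approach buys: the paper's inductive scheme is modular and makes the structural content explicit (their Lemma~2.1, that the intersection of $\ell_1$ translates of an $\mathcal L^{(r)}$-free set is $\mathcal L^{(r-1)}$-free). Your approach avoids the auxiliary weak lemma altogether---your hypothesis $k\ge n^{1-1/(\ell_1\cdots\ell_{r-1})}$ plays the same role---and the error analysis is more transparent: your observation that the last peel dominates the relative error, with average co-degree $\asymp m(k/N)^{\ell_1\cdots\ell_{r-2}}\gg m\,n^{-1/\ell_{r-1}}$, directly explains the exponent $\tfrac12+\tfrac1{2\ell_{r-1}}-\tfrac1{\ell_1\cdots\ell_{r-1}}$. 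Your remark that a ``naive induction\dots loses a genuine power of $n$'' deserves a small qualification: the paper \emph{does} succeed by induction, but only because the choice of $m$ inside Lemma~3.2 already encodes the weak bound; a recursion with error term depending only on $\ell_1$ would indeed fail. One minor point to tidy in a full write-up: when you say the factor $m^{\ell_1+\cdots+\ell_{r-1}}/(\ell_1!\cdots\ell_{r-1}!)$ ``cancels'', note that in the lower bound the normalising counts $\bar I_j$ carry genuine factors $\binom{m}{\ell_i}$, which match those in the upper bound up to $1+O(\ell_i^2/m)$; this slack is harmless for your choice of $m$ but is worth saying explicitly.
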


Let us compare Theorem \ref{thm:main-ubound} with  the know upper bounds for the aforementioned cases:

\begin{itemize}
  \item [i)] \emph{$\Ll_{2,2}$-free sets.} The upper bound $|A|\le \sqrt n +O(n^{1/4})$ for any Sidon set $A\subset \{1,\dots,n\}$ was proved by Erd\H os and Tur\'an \cite{E-T} and refined until $|A|< \sqrt n +n^{1/4}+1/2$  by other authors \cite{L,Ru,Ci1}. The Erd\H os-Tur\'an bound 
follows from  Theorem \ref{thm:main-ubound} for $r=\ell_1=\ell_2=2$.

      \smallskip

  \item [ii)] \emph{$\Ll_{2,\ell}$-free sets.} The upper bound $|A|<\sqrt{(\ell-1)n}+((\ell-1)n)^{1/4}+1/2$ for $B_2^{\circ}[\ell-1]$ sets $A\subset \{1,\dots,n\}$ was proved in \cite{Ci1}. Theorem \ref{thm:main-ubound} for $r=\ell_1=2$ and $\ell_2=\ell \ge 2$ gives
      $$F(n,\Ll_{2,\ell})\le (\ell-1)^{1/2}n^{1/2}+O(n^{\frac 14}).$$

      \medskip

    \item [iii)] \emph{$\mathcal L_{\ell_1,\ell_2}$-free sets.} Peng, Tesoro and Timmons \cite{PTT} proved that if $A\subset\{1,\dots,n\}$ does not contain $\ell_1$ copies of any set of $\ell_2$ elements then $|A|\le (\ell_2-1)^{1/\ell_1}n^{1-1/\ell_1}+O\left (n^{1/2-1/(2\ell_1)}\right )$. This also follows from Theorem \ref{thm:main-ubound} for $r=2$. Note that Erd\H os and Harzheim \cite{E-H} had previously proved the weaker estimate $|A|\ll n^{1-1/\ell_1}$ for these sets.

        \medskip

       \item [iv)] \emph{$\mathcal L^{_{(r)}}_{2,\dots,2}$-free sets.} Csaba S\'andor \cite{Sa} proved that if $A\subset \{1,\dots,n\}$ does not contain a Hilbert cube of dimension $r$ then $|A|\le n^{1-1/2^{r-1}}+2n^{1-1/2^{r-2}}$, except for finitely many $n$. Gunderson and R\"odl \cite{GR} had previously established the weaker upper bound  $|A|\ll n^{1-1/2^{r-1}}$.
  Theorem \ref{thm:main-ubound} in the case $\Ll^{(r)}_{2,\dots,2}$ implies $$F(n,\Ll^{(r)}_{2,\dots,2})\le n^{1-1/2^{r-1}}+O\left (n^{3/4-1/2^{r-1}}  \right ),$$ which improves  the error term for $r\ge 4$ in Sandor's estimate.
\end{itemize}

The probabilistic method provides a general lower bound for $F(n,\mathcal L^{_{(r)}}_{\ell_1,\dots,\ell_r} )$.
\begin{thm}\label{thm:main-lbound}For any $r\ge 2$ and $ 2\le \ell_1\le \cdots \le \ell_r$ we have
$$F(n,\mathcal L^{_{(r)}}_{\ell_1,\dots,\ell_r} )\ge n^{1-\frac{\ell_1+\dots+\ell_r\ -r}{\ell_1\cdots \ell_r\ -1}-o(1)}.$$
\end{thm}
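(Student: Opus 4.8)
The plan is to use the deletion method. Fix a density parameter $p$ and choose each element of $\{1,\dots,n\}$ independently with probability $p$ to form a random set $S$. Let $X = |S|$ and let $Y$ be the number of "copies" of a forbidden sumset configuration $L_1+\cdots+L_r$ contained in $S$, where the $L_i$ have sizes $\ell_1,\dots,\ell_r$. Removing one element from each bad configuration produces a $\mathcal L^{(r)}_{\ell_1,\dots,\ell_r}$-free set of size at least $X - Y$, so $F(n,\mathcal L^{(r)}_{\ell_1,\dots,\ell_r}) \ge \E(X) - \E(Y)$. We have $\E(X) = pn$, and the main task is to bound $\E(Y)$ from above, then optimize $p$.

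The key step is estimating $\E(Y)$. A configuration $L_1+\cdots+L_r \subseteq \{1,\dots,n\}$ is determined by the tuple $(L_1,\dots,L_r)$, but the map from tuples to sumsets is many-to-one, and — more importantly for the exponent — different tuples can have different numbers of \emph{distinct} elements in their sumset. If a given sumset configuration occupies $k$ distinct integers, its probability of lying inside $S$ is $p^k$; to get the stated exponent we need to count configurations by their number of distinct sums. The crude count is that the number of ways to choose the tuple $(L_1,\dots,L_r)$ with all entries in an interval of length $n$ is $O(n^{\ell_1+\cdots+\ell_r})$ — but here one must be careful: translating all the $L_i$ by amounts summing to zero gives the same sumset, so really there are $O(n^{\ell_1+\cdots+\ell_r-(r-1)})$ essentially distinct tuples, and the generic such configuration has $\ell_1\cdots\ell_r$ distinct sums (a "generic" sumset of sets of those sizes has full size $\prod \ell_i$, by an easy genericity/dimension argument, e.g. choosing the $L_i$ with algebraically independent-looking coordinates). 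Thus the dominant contribution is roughly $n^{\ell_1+\cdots+\ell_r-r+1}\, p^{\ell_1\cdots\ell_r}$, while degenerate configurations with fewer distinct sums contribute lower-order terms provided $p$ is not too small; one should check this by summing over the possible "types" of degeneracies and verifying each is negligible. This gives $\E(Y) = O\!\left(n^{\ell_1+\cdots+\ell_r-r+1} p^{\ell_1\cdots\ell_r}\right)$, up to lower-order terms.

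With $\E(X) - \E(Y) \gtrsim pn - C\, n^{\ell_1+\cdots+\ell_r-r+1} p^{\ell_1\cdots\ell_r}$, I would choose $p$ so that the second term is a small constant fraction of the first, i.e. $p^{\ell_1\cdots\ell_r - 1} \asymp n^{-(\ell_1+\cdots+\ell_r - r)} = n^{r - (\ell_1+\cdots+\ell_r)}$, giving $p \asymp n^{-\frac{\ell_1+\cdots+\ell_r - r}{\ell_1\cdots\ell_r - 1}}$ and hence $F \gtrsim pn \asymp n^{1 - \frac{\ell_1+\cdots+\ell_r - r}{\ell_1\cdots\ell_r - 1}}$. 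The $o(1)$ in the exponent absorbs the constants and the lower-order degenerate terms, so there is no need to track them precisely.

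The main obstacle is the combinatorial bookkeeping in the count of configurations: one must correctly identify that the relevant exponent of $n$ is $\ell_1+\cdots+\ell_r - r + 1$ (accounting for the translation redundancy among the summands), and show rigorously that configurations whose sumset has fewer than $\ell_1\cdots\ell_r$ distinct elements do not dominate. The cleanest route is probably to stratify by the number $k$ of distinct sums: bound the number of configurations with exactly $k$ distinct sums by $O(n^{k})$ times a constant depending only on $r$ and the $\ell_i$ (since $k$ distinct target sums together with bounded combinatorial data pin down the tuple up to finitely many choices), so that $\E(Y) \le \sum_{k} O(n^k) p^k$, and this geometric-type sum is dominated by its top term $k = \ell_1\cdots\ell_r$ precisely in the range of $p$ we use. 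Verifying the claim "$k$ distinct sums determine the configuration up to $O_{\,r,\ell_i}(1)$ choices" is the one place where a genuine (if short) argument is needed.
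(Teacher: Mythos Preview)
Your overall deletion-method strategy matches the paper's, and you correctly identify both the main term $n^{\ell_1+\cdots+\ell_r-r+1}\,p^{\ell_1\cdots\ell_r}$ and the fact that degenerate sumsets require separate treatment. The gap is precisely in that treatment.

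The stratification bound you propose does not work. With $p\asymp n^{-(\ell_1+\cdots+\ell_r-r)/(\ell_1\cdots\ell_r-1)}$ one has $np\to\infty$, so for every $k\ge 2$ the term $n^k p^k=(np)^k$ is \emph{larger} than $np$, not smaller; your sum $\sum_k O(n^k)p^k$ is dominated by its top term $(np)^{\ell_1\cdots\ell_r}$, which swamps $\mathbb E|S|=np$. Moreover the supporting claim that ``$k$ distinct sums plus bounded combinatorial data pin down the tuple'' is false: for $r=2,\ \ell_1=\ell_2=2$, a three-term progression $\{x,x+d,x+2d\}$ arises as $\{a,a+d\}+\{c,c+d\}$ for every pair with $a+c=x$, so there are $\asymp n$ tuples behind a single $3$-element sumset. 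A correct degeneracy analysis would need a much sharper count (essentially, each independent collision relation should cost a factor of $n$ in the tuple count while saving only a factor $p^{-1}$ in probability, and one must show the relations are ``independent enough''), which is genuinely delicate.

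The paper sidesteps the whole issue with one idea you are missing: restrict the random experiment to a Behrend set. One first takes $B_n\subset[n]$ with $|B_n|=n^{1-o(1)}$ containing no $3$-term arithmetic progression (Behrend's theorem), and samples $S\subset B_n$ with the same $p$. Since any \emph{degenerate} sumset $L_1+\cdots+L_r$ necessarily contains a $3$-term AP (if $x_1+\cdots+x_r=y_1+\cdots+y_r$ with $x_k>y_k$, then $\sum_{i\ne k}x_i+y_k,\ \sum x_i,\ \sum_{i\ne k}y_i+x_k$ is an AP inside the sumset), no degenerate configuration can occur inside $S$. Thus every obstruction in $S$ has exactly $\ell_1\cdots\ell_r$ distinct elements, its probability is exactly $p^{\ell_1\cdots\ell_r}$, and the single bound $\#\{\text{sumsets in }[n]\}\le n^{\ell_1+\cdots+\ell_r-r+1}$ suffices. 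The Behrend set costs only an $n^{o(1)}$ factor, which is absorbed into the $o(1)$ in the exponent.
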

The exponents in Theorems \ref{thm:main-ubound} and \ref{thm:main-lbound} are distinct and to close the gap between them is a major problem. We think that the exponent for these extremal sets is the one attained in the upper bound.
\begin{conj}\label{con1}For any $r\ge 2$ and $ 2\le \ell_1\le \cdots \le \ell_r$,
$$F(n,\mathcal L^{_{(r)}}_{\ell_1,\cdots,\ell_r})\asymp n^{1-1/(\ell_1\cdots \ell_{r-1})}.$$
\end{conj}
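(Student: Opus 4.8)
The upper bound $F(n,\mathcal L^{(r)}_{\ell_1,\dots,\ell_r})\ll n^{1-1/(\ell_1\cdots\ell_{r-1})}$ is already contained in Theorem~\ref{thm:main-ubound} (indeed with an explicit constant), so the entire content of the conjecture is the matching lower bound $F(n,\mathcal L^{(r)}_{\ell_1,\dots,\ell_r})\gg n^{1-1/(\ell_1\cdots\ell_{r-1})}$, which would improve the general probabilistic bound of Theorem~\ref{thm:main-lbound}. The plan is to obtain it through the hypergraph correspondence mentioned in the abstract: reduce the problem to constructing a \emph{translation-invariant} extremal hypergraph free of $K^{(r)}_{\ell_1,\dots,\ell_r}$, build such an object from algebraic and number-theoretic material, and transfer it from a finite abelian group to the interval $\{1,\dots,n\}$.

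\emph{The correspondence.} To $A\subseteq\Z_N$ associate the $r$-partite $r$-uniform hypergraph $H_A$ whose $r$ vertex classes are copies of $\Z_N$ and in which $\{v_1,\dots,v_r\}$ (one vertex per class) is an edge exactly when $v_1+\cdots+v_r\in A$. Then $H_A$ contains $K^{(r)}_{\ell_1,\dots,\ell_r}$ if and only if there are $L_i$ with $|L_i|=\ell_i$ and $L_1+\cdots+L_r\subseteq A$, i.e. if and only if $A$ is \emph{not} $\mathcal L^{(r)}_{\ell_1,\dots,\ell_r}$-free, while $|E(H_A)|=N^{r-1}|A|$ exactly. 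By the K\H ov\'ari--S\'os--Tur\'an type counting underlying Theorem~\ref{thm:main-ubound}, any $K^{(r)}_{\ell_1,\dots,\ell_r}$-free $r$-partite $r$-uniform hypergraph with classes of size $N$ has $O(N^{\,r-1/(\ell_1\cdots\ell_{r-1})})$ edges, so a set $A$ with $|A|\asymp N^{1-1/(\ell_1\cdots\ell_{r-1})}$ corresponds precisely to a Zarankiewicz-extremal hypergraph of the special ``Cayley'' form $H_A$. Hence it suffices to produce, for infinitely many $N$, an extremal configuration that is translation invariant in this sense.

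\emph{Constructions and transfer.} In the parameter ranges where the Zarankiewicz problem is understood one can hope to arrange this. For $\ell_1=2$ the relevant sets are $B_2^{\circ}[\ell_r-1]$ sets, and perfect-difference-set and Sidon constructions (Singer, Bose--Chowla, and their $B_2[g]$ variants) already furnish $A\subseteq\Z_N$ that is $\mathcal L_{2,\ell_r}$-free with $|A|=(1+o(1))\sqrt{(\ell_r-1)N}$, which settles cases~i) and~ii). For $\ell_1=\ell_2=3$, and more generally whenever $\ell_r$ exceeds the threshold in the Koll\'ar--R\'onyai--Szab\'o and Alon--R\'onyai--Szab\'o norm-graph theorems, the idea is to take the corresponding norm construction over $\F_q^{\,d}$, pass to a cyclic group by a digit/Freiman embedding as in the classical $B_h[g]$ literature, and recast the result as a dilation/translation-invariant subset of $\Z_N$, all the while checking that the edge count and the absence of $K^{(r)}_{\ell_1,\dots,\ell_r}$ survive. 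For $r\ge 3$, and in particular for the Hilbert-cube case $\ell_1=\cdots=\ell_r=2$, one would look for an inductive sumset construction or a direct one. Whatever the source of $A\subseteq\Z_N$, the passage to the interval is routine: take $N\asymp n$, view $A$ inside $\{0,\dots,N-1\}\subseteq\{1,\dots,n\}$, and note that a sumset $L_1+\cdots+L_r$ of small summands lying in $\{1,\dots,n\}$ reduces modulo $N$ to one of the same shape (the $L_i$ reduce injectively, being contained in an interval shorter than $N$), so $\mathcal L^{(r)}$-freeness over $\Z_N$ descends to the interval, exactly as in cases~i)--iv).

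\emph{The main obstacle.} One must be candid that the conjecture in full generality is at least as hard as notoriously open instances of the Zarankiewicz problem --- for example whether $\ex(n,K_{4,4})\asymp n^{7/4}$, which is the case $r=2$, $\ell_1=\ell_2=4$ --- so a complete proof along these lines is not currently within reach, and the realistic goal is the partial progress just sketched. Moreover, even where the extremal number \emph{is} known, the standard extremal graphs and hypergraphs (norm graphs, projective norm graphs) live on products of finite fields and exploit \emph{multiplicative} structure; they are not translation invariant for the \emph{additive} structure the correspondence requires. Converting them --- or finding genuinely additive substitutes --- into translation- or dilation-invariant sets of the same density while keeping the forbidden complete multipartite configuration absent is the crux. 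The tools I would try to bring to bear are Fourier-analytic control of the higher additive structure of $A$ (forcing sumsets of prescribed size to be large), random subsets of algebraic varieties in the spirit of Bukh's random algebraic method but in a cyclic ambient group, and the classical additive $B_h[g]$-constructions (Singer, Bose, Ruzsa), suitably generalised to the multi-summand, prescribed-size regime.
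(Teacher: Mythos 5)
You are attempting a statement that the paper itself presents only as a conjecture (Conjecture~\ref{con1}); the authors do not prove it, and they explicitly describe closing the gap between Theorems~\ref{thm:main-ubound} and~\ref{thm:main-lbound} as a major open problem, listing the estimates \eqref{Ex-2,l2}--\eqref{Ex-l1,l2} (due to Erd\H{o}s--Tur\'an, Trujillo-Solarte et al.\ and Peng--Tesoro--Timmons) as the only known cases. Your write-up is honest about this: the upper bound does follow from Theorem~\ref{thm:main-ubound}, the Cayley-hypergraph correspondence you describe is correct (it is essentially Proposition~\ref{ex-ge-Ex} of the paper, with $|E(H_A)|=N^{r-1}|A|$ in the $r$-partite normalisation), and the known cases $\ell_1=2$, $\ell_1=\ell_2=3$ and $\ell_2\ge(\ell_1-1)!+1$ that you recover via $B_2$-type sets and norm-graph ideas are exactly the cases already recorded in the paper. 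But nothing in your proposal proves the conjecture, and you say so yourself.

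The concrete gap is in the step ``hence it suffices to produce a Zarankiewicz-extremal configuration that is translation invariant.'' This is not a reduction that gains anything: the correspondence only transfers information in the direction $\Ll$-free set $\Rightarrow$ $K^{(r)}$-free hypergraph (so Conjecture~\ref{con1} implies Conjecture~\ref{con2}, as the paper notes), while an extremal $K^{(r)}_{\ell_1,\dots,\ell_r}$-free hypergraph --- even if one could exhibit it, which is itself open beyond the cases \eqref{exK22}--\eqref{exK(l1l2)} --- need not be of the Cayley form $H_A$, and the known norm/projective-norm constructions exploit multiplicative structure that does not survive as additive translation invariance. You identify this as ``the crux'' but supply no mechanism (Fourier control, random algebraic varieties in a cyclic group, generalised $B_h[g]$ constructions are named only as hopes) to overcome it, and in particular for $r\ge 3$ and for the Hilbert-cube case no candidate construction reaching the exponent $1-1/(\ell_1\cdots\ell_{r-1})$ is proposed; the best the paper itself has there is Theorem~\ref{lbound:Ex(Z_{(p-1)}^3,222)}, giving exponent $2/3$ for $\Ll^{(3)}_{2,2,2}$, still below the conjectured $3/4$. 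So the proposal should be read as a (reasonable) research programme reproducing the known partial results, not as a proof; the general statement remains open, in the paper and in your argument alike.
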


This conjecture  has been proved for some particular cases:
\begin{align}
F(n,\mathcal L_{2,\ell_2}) &\sim (\ell_2-1)^{1/2}n^{1-1/2}, \label{Ex-2,l2}\\
F(n,\mathcal L_{3,\ell_2}) &\asymp n^{1-1/3}, \quad (\ell_2\ge 3), \label{Ex-3,l2}\\
F(n,\mathcal L_{\ell_1,\ell_2}) &\asymp n^{1-1/\ell_1}, \quad (\ell_2\ge (\ell_1-1)!+1) \label{Ex-l1,l2}.
\end{align}
The asymptotic estimate \eqref{Ex-2,l2} for $\ell_2=2$ recovers the estimate found by Erd\H os and Tur\'an \cite{E-T} for extremal Sidon sets and was generalized to any $\ell_2\ge 2$ by Trujillo-Solarte, Garc\'ia-Pulgar\'in and Vel\'asquez-Soto \cite{GTV}. The estimates \eqref{Ex-3,l2} and \eqref{Ex-l1,l2} were proved by Peng, Tesoro and Timmons \cite{PTT}.

The lower bound in Theorem \ref{thm:main-lbound} has also been improved in other cases although they do not match the exponent $1-1/(\ell_1\dots \ell_{r-1})$. Trivially $F(n,\mathcal L_{4,\ell_2})\ge F(n,\mathcal L_{3,\ell_2})$, thus \eqref{Ex-3,l2} implies $F(n,\mathcal L_{4,\ell_2})\gg n^{1-1/3}$ for $\ell_2\ge 3$, which gives a better lower bound than Theorem \ref{thm:main-lbound} for $\ell_2=4,5,6$.

Another interesting case corresponds to $\Ll^{_{(3)}}_{2,2,2}$-free sets. Theorem \ref{thm:main-lbound} gives the lower bound $F(n,\mathcal L^{_{(3)}}_{2,2,2})\gg n^{1-3/7-o(1)}$ but Katz,  Krop and Maggioni \cite{KKM} found a construction which gives
\begin{equation}\label{lbound:Ex222}
F(n,\mathcal L^{_{(3)}}_{2,2,2})\gg n^{1-1/3}.
\end{equation}
We confirm this last lower bound with an alternative construction based upon a $\Ll^{(3)}_{2,2,2}$-free set in $\Z_{p-1}^3$. 
\begin{defn}Given a finite abelian group $G$, we will denote by $F(G,\mathcal L^{_{(r)}}_{\ell_1,\cdots,\ell_r})$ the largest size of a $\mathcal L^{_{(r)}}_{\ell_1,\cdots,\ell_r}$-free set in $G$.
\end{defn}
\begin{thm}\label{lbound:Ex(Z_{(p-1)}^3,222)}
For any prime $p \ge 2$ we have
$$F(\Z_{p-1}^3,\Ll^{(3)}_{2,2,2})\ge (p-3)^2.$$
\end{thm}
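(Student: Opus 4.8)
The plan is to work in the isomorphic group $(\F_p^*)^3$. Fixing a primitive root $g$ modulo $p$, the discrete logarithm is a group isomorphism $\F_p^*\cong\Z_{p-1}$, hence so is the coordinatewise map $(\F_p^*)^3\cong\Z_{p-1}^3$, and under it an additive sumset $L_1+L_2+L_3$ becomes a product set $L_1L_2L_3=\set{xyz:x\in L_1,\,y\in L_2,\,z\in L_3}$. So it suffices to produce $S\subseteq(\F_p^*)^3$ with $\abs S=(p-3)^2$ containing no product set $L_1L_2L_3$ with $\abs{L_1}=\abs{L_2}=\abs{L_3}=2$. I would take
$$S=\set{(x,y,z)\in(\F_p^*)^3:\ x+y+z=1\ \text{in }\F_p,\ x\neq1,\ y\neq1,\ z\neq1}.$$
Its cardinality follows from inclusion--exclusion among the $p^2$ solutions of $x+y+z=1$ in $\F_p^3$, over the events $x\in\{0,1\}$, $y\in\{0,1\}$, $z\in\{0,1\}$ (individual sizes $2p$, pairwise intersections $4$, triple intersection $3$ when $p\ge3$), giving $\abs S=p^2-6p+9=(p-3)^2$. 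The cases $p=2,3$ are vacuous, since then $\Z_{p-1}^3$ is trivial or $(p-3)^2=0$.

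The core is to prove $S$ free. Suppose $L_1L_2L_3\subseteq S$ with $\abs{L_i}=2$; write $L_i=\set{x_i,\,x_id_i}$ with $d_i=(a_i,b_i,c_i)\neq(1,1,1)$ and set $x_0:=x_1x_2x_3=:(\alpha,\beta,\gamma)$, so $x_0\prod_{i\in I}d_i\in S$ for all $I\subseteq\{1,2,3\}$. Taking $I=\varnothing$ gives $x_0\in S$, so $\alpha,\beta,\gamma\in\F_p^*$. Using $t^{\chi}=1+(t-1)\chi$ for $\chi\in\{0,1\}$, the requirement that all eight cube points satisfy $x+y+z=1$ is equivalent to saying that the multilinear polynomial $F(\mathbf x)=\alpha\prod_i(1+u_ix_i)+\beta\prod_i(1+v_ix_i)+\gamma\prod_i(1+w_ix_i)$, with $u_i=a_i-1$, $v_i=b_i-1$, $w_i=c_i-1$, equals $1$ at every point of $\{0,1\}^3$; being multilinear, $F$ must then be the constant $1$. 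Comparing the coefficients of $1$, of each $x_k$, of each $x_jx_k$ and of $x_1x_2x_3$ yields $\alpha+\beta+\gamma=1$ together with, writing $\mathbf c_i=(u_i,v_i,w_i)$, $\mathbf 1=(1,1,1)$ and $\langle\mathbf r,\mathbf s\rangle:=\alpha r_1s_1+\beta r_2s_2+\gamma r_3s_3$, the relations $\langle\mathbf c_k,\mathbf 1\rangle=0$ for all $k$, $\langle\mathbf c_j,\mathbf c_k\rangle=0$ for $j\neq k$, and $\alpha u_1u_2u_3+\beta v_1v_2v_3+\gamma w_1w_2w_3=0$.

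Since $\alpha\beta\gamma\neq0$ the form $\langle\cdot,\cdot\rangle$ on $\F_p^3$ is nondegenerate, and $\langle\mathbf 1,\mathbf 1\rangle=\alpha+\beta+\gamma=1\neq0$, so $\mathbf 1^\perp$ is a nondegenerate plane containing the pairwise orthogonal vectors $\mathbf c_1,\mathbf c_2,\mathbf c_3$, each nonzero because $d_i\neq\mathbf 1$. These must be mutually proportional (if two of them were linearly independent they would span the plane, forcing the third, orthogonal to both, to vanish), so $\mathbf c_i=\lambda_i\mathbf h$ with $\lambda_i\neq0$ and $\mathbf h=(h_1,h_2,h_3)\neq0$, and the relations above collapse to: $\alpha h_1^m+\beta h_2^m+\gamma h_3^m$ equals $1$ for $m=0$ and $0$ for $m=1,2,3$. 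A Vandermonde/moment computation, using $\alpha,\beta,\gamma\neq0$, rules this out unless $\mathbf h$ has two equal coordinates and a $0$ in the remaining place; in that case, say $\mathbf h=(h,h,0)$ with $h\neq0$, the $m=1$ relation gives $\alpha+\beta=0$ and hence $\gamma=1$, contradicting $x_0\in S$ (the two symmetric arrangements of $\mathbf h$ force $\alpha=1$ or $\beta=1$). Therefore $S$ contains no such product set. I expect the main obstacle to be exactly this last step: showing that the polynomial identity $F\equiv1$ forces the generators $d_i$ into a single near-degenerate family, and then checking that the three conditions $x,y,z\neq1$ defining $S$ are precisely what removes that family; getting the cardinality to be exactly $(p-3)^2$, as opposed to merely of that order, is a secondary bookkeeping point.
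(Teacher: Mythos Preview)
Your construction is identical to the paper's: your set $S\subset(\F_p^*)^3$ is precisely the image under $(x_1,x_2,x_3)\mapsto(\theta^{x_1},\theta^{x_2},\theta^{x_3})$ of the paper's set
\[
A=\{(x_1,x_2,x_3)\in\Z_{p-1}^3:\ \theta^{x_1}+\theta^{x_2}+\theta^{x_3}=1,\ x_1,x_2,x_3\neq0\},
\]
and your inclusion--exclusion count of $|S|=(p-3)^2$ is equivalent to (and slightly slicker than) the paper's direct count.

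Where you genuinely diverge is in the proof that $A$ is $\Ll^{(3)}_{2,2,2}$-free. The paper proceeds by the reduction of Lemma~\ref{lem:TBD-1}: it fixes $y\neq0$, shows that $A^y=A\cap(A+y)$ lies inside a set of the form $\{(x_1,x_2,x_3(x_1,x_2)):\lambda_1\theta^{x_1}+\lambda_2\theta^{x_2}=\mu\}$ with $\lambda_1,\lambda_2,\mu\neq0$, and then verifies by a short case analysis that this last set is Sidon. Your route is more structural: you encode all eight membership conditions at once as the identity $F\equiv1$ for a multilinear polynomial, read off that the shift vectors $\mathbf c_i=d_i-\mathbf 1$ are nonzero, lie in $\mathbf 1^\perp$, and are pairwise orthogonal for the nondegenerate diagonal form $\langle r,s\rangle=\alpha r_1s_1+\beta r_2s_2+\gamma r_3s_3$, and then use that three nonzero pairwise-orthogonal vectors in a nondegenerate plane must be proportional to a common isotropic $\mathbf h$. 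The resulting moment system $\alpha h_1^m+\beta h_2^m+\gamma h_3^m=[m=0]$ for $m=0,1,2,3$ then forces (via Vandermonde, using $\alpha\beta\gamma\neq0$) one of $\alpha,\beta,\gamma$ to equal $1$, which is exactly what the side conditions $x,y,z\neq1$ exclude.

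Both arguments are correct. The paper's is more elementary and fits its inductive framework (Lemma~\ref{lem:TBD-1}); yours is more conceptual and makes transparent \emph{why} the three conditions $x,y,z\neq1$ are precisely what is needed. Your Vandermonde step is stated tersely; when you write it up, spell out the short case split on how many of $h_1,h_2,h_3$ are equal and how many vanish (all cases except ``two equal, the third zero'' force some $\alpha,\beta,\gamma$ to vanish, and the remaining case forces one of them to equal $1$).
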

The set we construct to prove Theorem \ref{lbound:Ex(Z_{(p-1)}^3,222)} can be easily projected to the integers to prove \eqref{lbound:Ex222}, as it was done in \cite{KKM}. In general we have
\begin{prop}\label{prop:E(n_1n_2..n_k)}
$$F(2^{k-1}n_1\cdots n_k, \Ll^{(r)}_{\ell_1,\dots,\ell_r})\ge F(\Z_{n_1}\times \cdots \times\Z_{n_k}, \Ll^{(r)}_{\ell_1,\dots,\ell_r}).$$
\end{prop}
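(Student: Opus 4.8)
The plan is to transform a $\Ll^{(r)}_{\ell_1,\dots,\ell_r}$-free set $A\subseteq G:=\Z_{n_1}\times\cdots\times\Z_{n_k}$ into a $\Ll^{(r)}_{\ell_1,\dots,\ell_r}$-free set of the same cardinality inside the interval $\{1,\dots,2^{k-1}n_1\cdots n_k\}$, in two stages. First I would \emph{unfold} each cyclic coordinate $\Z_{n_i}$ into the integer interval $\{0,\dots,n_i-1\}$, which costs nothing, producing an $\Ll$-free set inside the box $\mathcal B:=\{0,\dots,n_1-1\}\times\cdots\times\{0,\dots,n_k-1\}\subseteq\Z^k$. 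Then I would \emph{merge} the coordinates of $\mathcal B$ one pair at a time, each merge doubling the size of the relevant interval, with $k-1$ merges collapsing $\mathcal B$ into a single interval of length $2^{k-1}n_1\cdots n_k$. Throughout I use two elementary facts: translating the summands of a sumset $L_1+\cdots+L_r$ by vectors that add up to $0$ leaves the sumset unchanged; and for finite sets of integers $\mathrm{diam}(T_1+\cdots+T_r)=\mathrm{diam}(T_1)+\cdots+\mathrm{diam}(T_r)$.

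\emph{Unfolding.} If $H$ is any finite abelian group (or, as needed in the iteration, a product of such with some intervals $\{0,\dots,m-1\}$) and $n\ge 1$, I claim $F(\{0,\dots,n-1\}\times H,\Ll^{(r)}_{\ell_1,\dots,\ell_r})\ge F(\Z_n\times H,\Ll^{(r)}_{\ell_1,\dots,\ell_r})$. Given an $\Ll$-free $A\subseteq\Z_n\times H$, let $A_0$ be obtained by replacing the first coordinate of each point by its representative in $\{0,\dots,n-1\}$. If $L_1+\cdots+L_r\subseteq A_0$ with $|L_i|=\ell_i$, translate the summands so that the first coordinate of $L_i$ has minimum $0$ for $i\ge 2$; since every first coordinate occurring in $A_0$ lies in $\{0,\dots,n-1\}$, one checks successively that $L_1$, and then every $L_i$, has all first coordinates in $\{0,\dots,n-1\}$. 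Reduction of the first coordinate modulo $n$ is then injective on each $L_i$ and carries $L_1+\cdots+L_r$ onto a sumset of the same shape inside $A$, a contradiction. Unfolding the coordinates of $G$ in turn gives $F(\mathcal B,\Ll)\ge F(G,\Ll)$.

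\emph{Merging.} To merge two consecutive coordinates of sizes $a$ and $b$, leaving the remaining ones (call them $H$) untouched, use the injection $\phi(x,y,h)=(x+2ay,\,h)$ into $\{0,\dots,2ab-1\}\times H$. Suppose $S$ in the domain is $\Ll$-free and $M_1+\cdots+M_r\subseteq\phi(S)$ with $|M_i|=\ell_i$. Since $\phi$ is injective on its domain we may write $M_i=\phi(\tilde M_i)$ with $\tilde M_i$ in the domain, and, $\phi$ being the restriction of a homomorphism, $\tilde M_1+\cdots+\tilde M_r$ maps onto $M_1+\cdots+M_r\subseteq\phi(S)$. For any $q$ in the sumset $\tilde M_1+\cdots+\tilde M_r$, writing $q=(\pi_1(q),\pi_2(q),\pi_H(q))$, membership of $\phi(q)$ in $\phi(S)$ forces $\pi_1(q)+2a\pi_2(q)$ to be of the form $x+2ay$ with $0\le x<a$, i.e.\ $\pi_1(q)\bmod 2a<a$. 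Now $C:=\pi_1(\tilde M_1)+\cdots+\pi_1(\tilde M_r)$ is a sumset of subsets of $\{0,\dots,a-1\}$, so two consecutive elements of $C$ differ by at most $\max_i\mathrm{diam}(\pi_1(\tilde M_i))\le a-1$ (in an expression of the smaller one, replace a summand by the larger one's summand in a coordinate where the latter is bigger). Hence $C$ cannot straddle any of the length-$a$ intervals $[2at+a,2at+2a-1]$, so it lies in a single block $[2at_0,2at_0+a-1]$. Consequently, for every $q$ the $\phi$-preimage of $\phi(q)$ equals $q-(2at_0,-t_0,0)$, so $S$ contains $(\tilde M_1+\cdots+\tilde M_r)-(2at_0,-t_0,0)$, a sumset of the prescribed shape — contradicting that $S$ is $\Ll$-free.

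Applying the merging step to $\mathcal B$ exactly $k-1$ times — first merging the coordinates of sizes $n_1$ and $n_2$ into one of size $2n_1n_2$, then merging that with $n_3$, and so on — collapses $\mathcal B$ into $\{0,\dots,2^{k-1}n_1\cdots n_k-1\}$; combined with the unfolding step (and translating the interval by $1$) this gives the Proposition. The translation bookkeeping in the unfolding step and the verification that the composite maps land in the asserted intervals are routine. The one point that carries the argument, and the step I expect to be the main obstacle when filling in the details, is the merging step: the choice of the ``doubled'' map $\phi(x,y,h)=(x+2ay,h)$ together with the diameter estimate showing that a sumset appearing in the image is confined to a single block of length $2a$ — which is precisely what stops the collapsing map from manufacturing spurious sumsets.
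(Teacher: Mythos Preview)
Your unfolding step is fine, but the merging step has a real gap. The sentence ``since $\phi$ is injective on its domain we may write $M_i=\phi(\tilde M_i)$ with $\tilde M_i$ in the domain'' is unjustified: the summands $M_i$ live in the ambient group $\Z\times H$ and need not lie in the image of $\phi$ restricted to the box. For instance, with $a=2$, $H$ trivial, $M_1=\{0,3\}$ and $M_2=\{1,5\}$ one has $M_1+M_2=\{1,4,5,8\}\subseteq\operatorname{im}(\phi|_{\text{box}})$ while $3\notin\operatorname{im}(\phi|_{\text{box}})$. Your diameter bound $\mathrm{diam}(\pi_1(\tilde M_i))\le a-1$, on which the single-block argument rests, uses exactly this claim. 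The gap is repairable within your framework: lift each $M_i$ through the surjective \emph{homomorphism} $\phi\colon\Z^2\times H\to\Z\times H$ instead; since every first coordinate of $M_1+\cdots+M_r$ is $<a$ modulo $2a$, fixing one element in each $M_j$ with $j\ne i$ shows that a translate of $\pi_1(M_i)\bmod 2a$ already sits in $\{0,\dots,a-1\}$, so lifts $\tilde M_i$ can be chosen with $\pi_1(\tilde M_i)$ in some integer interval of length $a$, restoring the diameter bound and the rest of your argument.

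The paper takes a different route that sidesteps the lifting issue entirely. It uses the single map $\varphi(x_1,\dots,x_k)=x_1+(2n_1)x_2+\cdots+(2n_1)\cdots(2n_{k-1})x_k$ together with the Freiman-type property $\varphi(x)+\varphi(y)=\varphi(u)+\varphi(v)\Rightarrow x+y=u+v$ (a base-$2n_i$ digit argument), and a lemma characterising membership in $\Ll^{(r)}_{\ell_1,\dots,\ell_r}$ purely by a system of equations of the form $a+b=c+d$ plus some distinctness conditions. Both types of conditions are preserved under $\varphi^{-1}$, so the $\varphi$-preimage of a sumset in $\varphi(A)$ is automatically a sumset of the same shape in $A$. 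Your geometric block argument is more hands-on and avoids that characterisation lemma, at the price of having to control where the lifts of the summands land; the paper's algebraic route trades that for an auxiliary structural lemma but never needs to choose lifts at all.
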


\subsection{Extremal problems in graphs and hypergraphs}
Given a graph $\mathcal H$, let $\ex(n,\mathcal H)$ denote the maximum number of edges (or hyperedges) of a $n$ vertices graph (or hypergraph) which does not contain $\mathcal H$ as a sub-graph (or sub-hypergraph). Estimating $\ex(n,\mathcal H)$ is a major problem in extremal graph theory. An important case is when $\mathcal H=K_{\ell_1,\ell_2}$.
It is known  that
\begin{equation}\label{co}n^{2-\frac{\ell_1+\ell_2-2}{\ell_1\ell_2-1}}\ll \ex(n, K_{\ell_1,\ell_2})\le \frac 12(\ell_2-1)^{1/\ell_1}n^{2-\frac 1{\ell_1}} (1+o(1)).\end{equation}
The upper bound was obtained by K\"{o}vari, S\'os and Tur\'an \cite{KST} and the lower bound can be easily obtained using the probabilistic method.

There is a gap between the exponents in \eqref{co} and to improve the exponent on the lower bound is a difficult problem. The conjecture is that the true exponent  is the one attained in the upper bound. The only cases where the upper bound has been reached by a construction of a graph  with $\Omega(n^{2-1/\ell_1})$ edges are
\begin{align}
\ex(n,K_{2,2}) &=\frac 12n^{3/2}(1+o(1)), \label{exK22}\\
\ex(n,K_{2,\ell_2}) &=\frac{\sqrt{\ell_2-1}}2n^{3/2}(1+o(1)),\quad (\ell_2\ge 2), \label{exK(2l2)}\\
\ex(n,K_{3,3}) &=\frac{1}2n^{5/3}(1+o(1)),\label{exK33}\\
\ex(n,K_{\ell_1,\ell_2}) &\asymp n^{2-1/\ell_1},\quad (\ell_2\ge (\ell_1-1)!+1). \label{exK(l1l2)}
\end{align}
Erd\H os, R\'enyi and S\'os \cite{ERS}, and Brown \cite{Br} proved \eqref{exK22}. F\"{u}redi \cite{F} obtained \eqref{exK(2l2)} and Brown \cite{Br} and F\"{u}redi \cite{F} proved \eqref{exK33}, whereas \eqref{exK(l1l2)} was proved by Alon, R\'onyai and Szab\'o \cite{ARS}.
Ball and Pepe \cite{BP} have recently proved that $\ex(n,K_{5,5})\gg n^{7/4}$. Their result also improves the exponent in the lower bound in \eqref{co} for the cases  $(\ell_1,\ell_2)=(5, \ell_2),\ 5\le \ell_2\le 12$ and for $(\ell_1,\ell_2)=(6,\ell_2),\ 6\le \ell_2\le 8$.

The analogous problem for hypergraphs  seems to be more difficult.

\begin{defn} \label{defn:r-uniform_hypergraph}
Let $r\ge 2$ and $2\le \ell_1\le \cdots \le \ell_r$ be integers. We write $K^{(r)}_{\ell_1,\dots,\ell_r}$  for the $r$-uniform hypergraph $(V,\mathcal E)$ where $V=V_1\cup\cdots \cup V_r$ with $|V_i|=\ell_i,\ i=1,\dots,r$ and $$\mathcal E=\left \{\{x_1,\dots,x_r\}:\ x_i\in V_i,\ i=1,\dots,r\right \}.$$

We will say that the $r$-hypergraph $\mathcal H$ is $K^{(r)}_{\ell_1,\dots,\ell_r}$-free when $\mathcal H$ does not contain
any $r$-uniform hypergraph $K^{(r)}_{\ell_1,\dots,\ell_r}$.
\end{defn}
We recall that  $\ex(n; K^{(r)}_{\ell_1,\cdots ,\ell_r})$ is the maximum number of hyperedges of a $K^{(r)}_{\ell_1,\dots,\ell_r}$-free hypergraph of $n$ vertices. See \cite{FS} for a nice survey on extremal problems on graphs and hypergraphs. An easy probabilistic argument gives a lower bound which generalizes \eqref{co}:
\begin{equation}\label{bounds}
n^{r-\frac{\ell_1+\cdots +\ell_r-r}{\ell_1\cdots \ell_r-1}}\ll \ex(n; K^{(r)}_{\ell_1,\cdots ,\ell_r}).
\end{equation}
The upper bound was considered by Erd\H os in the case $\ell=\ell_1=\cdots =\ell_r$. He proved
 \cite[Theorem 1]{E}  that 
\begin{equation}\label{eq:ubound_Kl..l-free_HGraph-E_1}
ex(n,K^{(r)}_{\ell,\ldots,\ell}) \ll n^{r - 1/\ell^{r-1}}.
\end{equation} 
Erd\H os and Simonovits wrote in \cite{ES}  that probably $\lim_{n\to \infty} \frac{\ex(n, K^{(r)}_{\ell,\cdots ,\ell })}{n^{r-1/\ell^{r-1}}}$ exists and it is a positive number.
We refine the estimate \eqref{eq:ubound_Kl..l-free_HGraph-E_1} as follows.
\begin{thm} \label{thm:ubound_Kl..l-free_HGraph}
\begin{equation} \label{ubound_Kl..l-free_HGraph}
ex(n,K^{(r)}_{\ell_1,\ldots,\ell_r}) \le \frac{(\ell_r-1)^{1/\ell_1\cdots \ell_{r-1}}}{r!} \; n^{r - 1/\ell_1\cdots \ell_{r-1}} (1 + o(1)), \quad (n \to \infty). 
\end{equation}
\end{thm}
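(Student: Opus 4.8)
The plan is to deduce Theorem~\ref{thm:ubound_Kl..l-free_HGraph} from a supersaturation counting estimate proved by induction on the uniformity, and then to run a single K\H ovari--S\'os--Tur\'an-type double count. Throughout, set $P=\ell_1\cdots\ell_{r-1}$, $\Sigma=\ell_1+\cdots+\ell_{r-1}$, and more generally $P_s=\ell_1\cdots\ell_s$, $\Sigma_s=\ell_1+\cdots+\ell_s$. Call a \emph{labelled copy} of $K^{(s)}_{\ell_1,\dots,\ell_s}$ in an $s$-uniform hypergraph $\mathcal G$ a tuple $(V_1,\dots,V_s)$ of pairwise disjoint vertex sets with $|V_i|=\ell_i$ such that $\{x_1,\dots,x_s\}$ is an edge of $\mathcal G$ for every transversal $(x_1,\dots,x_s)\in V_1\times\cdots\times V_s$.

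\emph{Step 1: a counting lemma.} I would prove, by induction on $s\ge 1$, that every $s$-uniform hypergraph on $N$ vertices with $E$ edges contains at least
$$\frac{(s!\,E)^{P_s}}{\ell_1!\cdots\ell_s!}\;N^{\Sigma_s-sP_s}\;-\;(\text{lower-order error})$$
labelled copies of $K^{(s)}_{\ell_1,\dots,\ell_s}$. The base case $s=1$ is just $\binom{E}{\ell_1}\ge E^{\ell_1}/\ell_1!-O(E^{\ell_1-1})$. For the inductive step, count labelled copies as $\sum_{(V_1,\dots,V_{s-1})}\binom{c(V_1,\dots,V_{s-1})}{\ell_s}$, where $(V_1,\dots,V_{s-1})$ runs over disjoint tuples of the prescribed sizes and $c(V_1,\dots,V_{s-1})$ is the number of common neighbours (which one checks automatically lie outside $\bigcup_i V_i$). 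Jensen's inequality applied to the convex function $\binom{\cdot}{\ell_s}$ bounds this below by the number of tuples times $\binom{\text{average codegree}}{\ell_s}$; and the total codegree $\sum_{(V_1,\dots,V_{s-1})} c(V_1,\dots,V_{s-1})$ equals $\sum_v(\text{number of labelled copies of }K^{(s-1)}_{\ell_1,\dots,\ell_{s-1}}\text{ in the link of }v)$, to which the inductive hypothesis applies; a last application of Jensen to the degrees $d(v)$ (using $\sum_v d(v)=sE$ and $P_{s-1}\ge1$) finishes it. The algebra is rigged so that the relations $(s-1)!\cdot s=s!$, $P_{s-1}\cdot\ell_s=P_s$ and $\Sigma_{s-1}+\ell_s=\Sigma_s$ make the bound reproduce its own form and, crucially, the factorials $\ell_1!\cdots\ell_s!$ reappear exactly where needed.

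\emph{Step 2: from the lemma to the theorem.} Let $\mathcal H$ be a $K^{(r)}_{\ell_1,\dots,\ell_r}$-free $r$-uniform hypergraph on $n$ vertices with $m$ edges; we may assume $m\gg n^{r-1/P}$, else there is nothing to prove. Call a \emph{flower} a tuple $(V_1,\dots,V_{r-1},v)$ with the $V_i$ disjoint of sizes $\ell_1,\dots,\ell_{r-1}$, with $v\notin\bigcup_i V_i$, and with $\{x_1,\dots,x_{r-1},v\}$ an edge of $\mathcal H$ for every transversal. On the one hand, for each fixed $(V_1,\dots,V_{r-1})$ the number of admissible $v$ is at most $\ell_r-1$, since $\ell_r$ of them would, together with $V_1,\dots,V_{r-1}$, form a copy of $K^{(r)}_{\ell_1,\dots,\ell_r}$ (the admissible $v$ necessarily avoid $\bigcup_i V_i$, so disjointness is automatic). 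Hence the number of flowers is at most $(\ell_r-1)\prod_{i=1}^{r-1}\binom n{\ell_i}=(1+o(1))(\ell_r-1)\,n^{\Sigma}/(\ell_1!\cdots\ell_{r-1}!)$. On the other hand, the number of flowers equals $\sum_v(\text{number of labelled copies of }K^{(r-1)}_{\ell_1,\dots,\ell_{r-1}}\text{ in the link of }v)$; applying the Step~1 lemma with $s=r-1$ to each link (which has $d(v)$ edges on at most $n$ vertices) and then Jensen to the degrees with $\sum_v d(v)=rm$ gives a lower bound $(1-o(1))\,(r!\,m)^{P}\,n^{\Sigma-rP+1}/(\ell_1!\cdots\ell_{r-1}!)$, using $\big((r-1)!\big)^{P}r^{P}=(r!)^{P}$. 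Comparing the two estimates, the factors $\ell_1!\cdots\ell_{r-1}!$ and $n^{\Sigma}$ cancel, leaving $(r!\,m)^{P}\le(1+o(1))(\ell_r-1)\,n^{rP-1}$, that is $m\le(1+o(1))\,(\ell_r-1)^{1/P}n^{r-1/P}/r!$, which is exactly \eqref{ubound_Kl..l-free_HGraph}. (One must peel off the \emph{largest} part: discarding a smaller $\ell_i$ instead would yield the weaker exponent $r-\ell_i/(\ell_1\cdots\ell_r)$.)

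\emph{Main obstacle.} The conceptual content is light and the constant $1/r!$ is forced by the two sources of overcounting (each edge of $\mathcal H$ lies in $r$ links, and within each link each edge is split off a vertex $r-1$ more times), so that the multiplicities multiply to $r!$. The real work is bookkeeping: one must carry the lower-order error terms through the nested Jensen steps of Step~1 and, in particular, handle vertices of small degree in the convexity estimate $\sum_v d(v)^{P}\ge n(rm/n)^{P}$, so that all such losses remain $o(1)$ relative to the main term. This is routine within the K\H ovari--S\'os--Tur\'an circle of ideas, but it has to be done carefully to land the precise coefficient in \eqref{ubound_Kl..l-free_HGraph}.
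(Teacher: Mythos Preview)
Your approach is correct, and the error-term bookkeeping you flag is indeed routine (the paper's Overlapping Theorem, Theorem~\ref{thm:overlapping-thm}, is precisely the clean form of the Jensen step you need, and it holds without any lower bound on the arguments). The route, however, is genuinely different from the paper's.

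The paper runs the induction on $r$ directly on the \emph{extremal number}: writing $e_r=\ex(n,K^{(r)}_{\ell_1,\dots,\ell_r})$ and $e_{r-1}=\ex(n,K^{(r-1)}_{\ell_2,\dots,\ell_r})$, it fixes $\ell_1$ vertices $v_1,\dots,v_{\ell_1}$ and observes that their common link $N(v_1)\cap\cdots\cap N(v_{\ell_1})$, viewed as an $(r-1)$-uniform hypergraph, is $K^{(r-1)}_{\ell_2,\dots,\ell_r}$-free and hence has at most $e_{r-1}$ edges. One application of the Overlapping Theorem then yields the recursion $e_r\le \frac{((r-1)!)^{1/\ell_1}}{r!}\,e_{r-1}^{\,1/\ell_1}\,n^{r-(r-1)/\ell_1}(1+o(1))$, into which the inductive bound is substituted. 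So the paper peels off the \emph{smallest} part $\ell_1$ at the top level and feeds in the already-known asymptotic \emph{upper bound} for the lower uniformity; no supersaturation statement is ever formulated.

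Your route is in a sense dual: you first prove, by the same nested Jensen induction, a supersaturation \emph{lower bound} on labelled copies of $K^{(r-1)}_{\ell_1,\dots,\ell_{r-1}}$ in an arbitrary $(r-1)$-graph, apply it inside each single-vertex link, and only at the very last step convert to an extremal bound by peeling off the \emph{largest} part $\ell_r$. What you gain is an explicit counting lemma of independent interest; what you pay is having to propagate the error terms through every link of every vertex, whereas the paper needs only the single asymptotic $e_{r-1}$ at each stage. Both arguments land on the same constant $(\ell_r-1)^{1/P}/r!$, and for the same structural reason: the $r!$ records that each $r$-edge is seen once from each of its $r$ vertices, and then again $r-1$ times, $r-2$ times, etc., as the induction unwinds.
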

The case $r=2$ in Theorem \ref{thm:ubound_Kl..l-free_HGraph} is the result \eqref{co} proved by K\"{o}vari, S\'os and Tur\'an \cite{KST}. 
It is believed that the upper bound in \eqref{ubound_Kl..l-free_HGraph} is not far from the real value of $ex(n,K^{(r)}_{\ell_1,\ldots,\ell_r})$.
\begin{conj}\label{con2}
$$\ex(n, K^{(r)}_{\ell_1,\cdots ,\ell_r})\asymp n^{r-1/\ell_1\cdots \ell_{r-1}}.$$
\end{conj}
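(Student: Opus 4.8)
\textbf{Proof proposal for Conjecture \ref{con2}.} The upper bound $\ex(n,K^{(r)}_{\ell_1,\ldots,\ell_r})\ll n^{r-1/\ell_1\cdots\ell_{r-1}}$ is exactly Theorem \ref{thm:ubound_Kl..l-free_HGraph}, so the entire content of the conjecture is the matching lower bound: for every fixed $r$ and $2\le\ell_1\le\cdots\le\ell_r$ one must construct a $K^{(r)}_{\ell_1,\ldots,\ell_r}$-free $r$-uniform hypergraph on $n$ vertices with $\gg n^{r-1/\ell_1\cdots\ell_{r-1}}$ hyperedges. The probabilistic construction behind \eqref{bounds} only yields the exponent $r-\frac{\ell_1+\cdots+\ell_r-r}{\ell_1\cdots\ell_r-1}$, which is strictly smaller, so a deletion argument cannot reach the target and one genuinely needs an algebraic construction.

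The first route I would try transfers the problem to the $\mathcal L$-free world, where the analogous Conjecture \ref{con1} lives. Given a $\mathcal L^{(r)}_{\ell_1,\ldots,\ell_r}$-free set $A\subseteq\Z_N$, form the $r$-partite $r$-uniform hypergraph $\mathcal H_A$ on $r$ disjoint copies $C_1,\ldots,C_r$ of $\Z_N$ by declaring $\{x_1,\ldots,x_r\}$ (with $x_i\in C_i$) a hyperedge exactly when $x_1+\cdots+x_r\in A$. Since $\mathcal H_A$ is $r$-partite, a short argument forces each part of a would-be $K^{(r)}_{\ell_1,\ldots,\ell_r}$ to lie in one class and distinct parts in distinct classes; unwinding the definitions, such a copy is precisely a sumset $L_1+\cdots+L_r\subseteq A$ with $|L_i|=\ell_i$, which $A$ forbids, so $\mathcal H_A$ is $K^{(r)}_{\ell_1,\ldots,\ell_r}$-free. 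As every element of $\Z_N$ has exactly $N^{r-1}$ representations as an ordered sum of $r$ elements, $\mathcal H_A$ has $|A|\,N^{r-1}$ hyperedges on $rN$ vertices, which (taking $A$ extremal) gives
$$\ex(n,K^{(r)}_{\ell_1,\ldots,\ell_r})\;\gg_{r}\;n^{r-1}\,F\!\left(\Z_{\lfloor n/r\rfloor},\,\mathcal L^{(r)}_{\ell_1,\ldots,\ell_r}\right).$$
In particular Conjecture \ref{con1} for cyclic groups would imply Conjecture \ref{con2}, and one would then manufacture the required dense $\mathcal L$-free sets by generalizing the algebraic constructions already behind \eqref{Ex-2,l2}, \eqref{Ex-3,l2}, \eqref{Ex-l1,l2} and Theorem \ref{lbound:Ex(Z_{(p-1)}^3,222)}: say a Bose--Chowla/Singer-type set tailored to $r$-fold sumsets, or a higher-dimensional cousin of the $\Z_{p-1}^3$ construction.

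A second and possibly more flexible route---since a hypergraph need not be as rigid as a Cayley-type object---is to build the extremal hypergraph directly, extending the (projective) norm graphs of Brown, F\"uredi and Alon--R\'onyai--Szab\'o that give \eqref{exK(l1l2)}, or Bukh's random algebraic method. One would take the $r$ vertex classes to be copies of $\F_q$ (or of an extension $\F_{q^t}$) and join $\{x_1,\ldots,x_r\}$ when a carefully chosen polynomial system in $x_1,\ldots,x_r$ over $\F_q$ holds, deducing $K^{(r)}_{\ell_1,\ldots,\ell_r}$-freeness from a B\'ezout/Lang--Weil bound on the common zeros of the resulting system; as in the graph case this should succeed once $\ell_r$ is large relative to $\ell_1\cdots\ell_{r-1}$ (one expects a threshold of the shape $\ell_r\ge(\ell_1\cdots\ell_{r-1}-1)!+1$), with the degrees tuned so that the edge count comes out as $\asymp n^{r-1/\ell_1\cdots\ell_{r-1}}$.

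The hard part is exactly the obstruction already visible when $r=2$: even for graphs the conjectured exponent is known only for the special pairs in \eqref{exK22}--\eqref{exK(l1l2)}, and for $r\ge 3$ essentially no construction matching $n^{r-1/\ell_1\cdots\ell_{r-1}}$ is known at all. The difficulty is structural rather than computational---one needs a genuinely new family of explicit configurations, or a new random-algebraic input, that simultaneously avoids every $\ell_1\times\cdots\times\ell_r$ \emph{grid} while staying dense, and at present there is no general mechanism for producing such families. Consequently a complete proof of Conjecture \ref{con2} would in particular settle the long-standing lower-bound problem for the K\"ovari--S\'os--Tur\'an theorem, so I would expect only partial progress---new ranges of $(\ell_1,\ldots,\ell_r)$ such as $\ell_1=\cdots=\ell_{r-1}=2$, or $\ell_r$ very large---rather than a full resolution.
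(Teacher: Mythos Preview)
The statement you were asked to prove is labeled \emph{Conjecture} in the paper, and the paper does not prove it; it is presented precisely as an open problem, supported on the upper-bound side by Theorem~\ref{thm:ubound_Kl..l-free_HGraph} and on the lower-bound side only by the probabilistic estimate~\eqref{bounds} together with the sporadic constructions \eqref{lbound:exK222}--\eqref{lbound:exK2-2}. So there is no ``paper's own proof'' to compare against.

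Your write-up is not a proof either, and you say so yourself in the last paragraph: you correctly observe that already the case $r=2$ is the notorious lower-bound problem for the K\"ov\'ari--S\'os--Tur\'an theorem, open for most pairs $(\ell_1,\ell_2)$, and that therefore a full resolution is out of reach. That diagnosis matches the paper exactly. Your first ``route'' is essentially the content of Proposition~\ref{ex-ge-Ex} (the paper uses the non-partite Cayley-type hypergraph on $G$ rather than your $r$-partite variant, but the conclusion is the same: Conjecture~\ref{con1} in groups implies Conjecture~\ref{con2}). Your second route, via norm-graph or random-algebraic generalizations, is a reasonable research programme but, as you note, no such construction is known for general $r\ge 3$.

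In short: there is no gap to name because you have not claimed a proof; your assessment that the conjecture is open and your outline of the known reductions are consistent with the paper.
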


The lower bound in \eqref{bounds} has been improved in a few cases for $r \ge 3$:
\begin{align}
\ex(n,K_{2,2,2}^{(3)}) &\gg n^{3-1/3}, \label{lbound:exK222}\\
\ex(n,K^{(r)}_{2,\dots,2}) & \gg n^{r-\frac r{2^r-1}+\frac 1{s(2^r-1)}}, \quad (sr\equiv 1\pmod{2^r-1}). \label{lbound:exK2-2}
\end{align}
Katz, Krop and Maggioni \cite{KKM} attained \eqref{lbound:exK222} and Gunderson and R\"{o}dl \cite{GR} proved \eqref{lbound:exK2-2}. An alternative proof of \eqref{lbound:exK222} follows from Theorem \ref{lbound:Ex(Z_{(p-1)}^3,222)} and Proposition \ref{ex-ge-Ex} below.

\subsection{Connection between the two areas of problems}
The two exponents of $n$ in Theorems \ref{thm:main-ubound} and \ref{thm:main-lbound} have the same flavour as the two exponents of $n$ in \eqref{bounds} and in Theorem \ref{thm:ubound_Kl..l-free_HGraph}. This is a consequence of the following result.
\begin{prop}\label{ex-ge-Ex}
Let $G$ be a finite abelian group with $|G|=n$. Then
$$\ex(n,K^{(r)}_{\ell_1,\dots,\ell_r})\ge \binom nr \frac{F(G,\Ll^{(r)}_{\ell_1,\dots,\ell_r})}n.$$
\end{prop}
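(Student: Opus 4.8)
The plan is to build a $K^{(r)}_{\ell_1,\dots,\ell_r}$-free hypergraph on $n$ vertices directly from a $\Ll^{(r)}_{\ell_1,\dots,\ell_r}$-free set in $G$, using the group structure of $G$ to translate, and then average over all translates to extract one hypergraph with many hyperedges. Let $A\subset G$ be a $\Ll^{(r)}_{\ell_1,\dots,\ell_r}$-free set with $|A|=F(G,\Ll^{(r)}_{\ell_1,\dots,\ell_r})$. Take the vertex set to be $G$ itself (so $n$ vertices), and for each $g\in G$ consider the $r$-uniform hypergraph $\mathcal H_g$ whose hyperedges are all the $r$-subsets $\{x_1,\dots,x_r\}\subset G$ such that $x_1+\cdots+x_r\in g+A$ (equivalently $x_1+\cdots+x_r-g\in A$). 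First I would check that $\mathcal H_g$ is $K^{(r)}_{\ell_1,\dots,\ell_r}$-free: if it contained a copy of $K^{(r)}_{\ell_1,\dots,\ell_r}$ with parts $V_1,\dots,V_r$ of sizes $\ell_1,\dots,\ell_r$, then every sum $x_1+\cdots+x_r$ with $x_i\in V_i$ would lie in $g+A$, i.e. $V_1+\cdots+V_r\subset g+A$, hence $(V_1-g+\text{shift})+\cdots$ — more precisely the sumset $(V_1+\cdots+V_r)-g\subset A$ would be a sumset with summand sizes exactly $\ell_i$ contained in $A$ (after absorbing the shift $-g$ into one summand), contradicting that $A$ is $\Ll^{(r)}_{\ell_1,\dots,\ell_r}$-free. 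One minor point to handle here: the hyperedge condition is about $r$-element subsets, whereas a sumset $V_1+\cdots+V_r$ could in principle have coincidences among its elements or among the $x_i$; but the definition of $K^{(r)}_{\ell_1,\dots,\ell_r}$ as a subhypergraph forces the $r$ chosen vertices to be distinct, so the copy genuinely witnesses the forbidden sumset pattern and the contradiction stands.

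Next I would count hyperedges on average. Fix any $r$-subset $\{x_1,\dots,x_r\}\subset G$; the number of $g\in G$ for which this is a hyperedge of $\mathcal H_g$ is exactly the number of $g$ with $x_1+\cdots+x_r-g\in A$, which is $|A|$ (as $g$ ranges over $G$, $x_1+\cdots+x_r-g$ ranges over all of $G$, hitting $A$ exactly $|A|$ times). Hence
$$\sum_{g\in G}|\mathcal E(\mathcal H_g)| \;=\; \sum_{\{x_1,\dots,x_r\}}|A| \;=\; \binom{n}{r}|A|,$$
so by averaging there exists some $g$ with $|\mathcal E(\mathcal H_g)|\ge \binom{n}{r}|A|/n$. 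Since $\mathcal H_g$ is $K^{(r)}_{\ell_1,\dots,\ell_r}$-free on $n$ vertices, this gives $\ex(n,K^{(r)}_{\ell_1,\dots,\ell_r})\ge \binom{n}{r}F(G,\Ll^{(r)}_{\ell_1,\dots,\ell_r})/n$, as claimed.

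I do not expect a serious obstacle here; the argument is a clean "translation + averaging" and the only care needed is the bookkeeping in the first step — making sure that a genuine subhypergraph copy of $K^{(r)}_{\ell_1,\dots,\ell_r}$ in $\mathcal H_g$ really produces $r$ sets $L_i\subset G$ with $|L_i|=\ell_i$ whose sumset lies in $A$, and in particular that pulling the translate $-g$ into (say) $L_1$ does not change its cardinality. Both are immediate: distinctness of vertices in the parts $V_i$ gives $|L_i|=\ell_i$, and translating a finite set preserves its size. So the whole proof is a short direct construction; the slightly delicate (but routine) part is just verifying the $K^{(r)}_{\ell_1,\dots,\ell_r}$-freeness from the $\Ll$-freeness of $A$.
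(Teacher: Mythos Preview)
Your proof is correct and follows essentially the same approach as the paper: construct, for each translate $g+A$ of a maximal $\Ll^{(r)}_{\ell_1,\dots,\ell_r}$-free set $A$, the $r$-uniform hypergraph on vertex set $G$ whose edges are $r$-subsets summing into $g+A$, verify $K^{(r)}_{\ell_1,\dots,\ell_r}$-freeness from $\Ll$-freeness, and average over $g$ to obtain a hypergraph with at least $\binom{n}{r}|A|/n$ edges. The only cosmetic difference is that the paper phrases the averaging as ``$\ex\ge$ each translate's edge count, hence $\ex\ge$ their average'', while you pick one translate achieving at least the average; these are equivalent, and your treatment of the distinctness issue in the $K$-freeness step is, if anything, slightly more careful than the paper's.
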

The proof uses  $\Ll^{(r)}_{\ell_1,\dots,\ell_r}$-free sets in finite abelian groups to construct $K^{(r)}_{\ell_1,\dots,\ell_r}$-free hypergraphs.
Proposition \ref{ex-ge-Ex} connects results on extremal problems in abelian groups with results on extremal problems in hypergraphs.

We mention a couple of cases already discussed in the literature. It is well known that if $A$ is a Sidon set in a finite abelian group $G$ then the graph $\mathcal G(V,\mathcal E)$ where $V=G$ and $\mathcal E=\{\{x,y\}:\ x+y\in A\}$ is a $K_{2,2}$-free graph. Another related example is the connection that was discussed in \cite{PTT} between $\Ll_{\ell_1,\ell_2}$-free sets and the problem of Zarankiewicz, which in turn is connected to extremal problems on $K_{\ell_1,\ell_2}$-free graphs (see \cite[\S 2]{F}). As a final example \eqref{lbound:exK222} can be obtained from Theorem \ref{lbound:Ex(Z_{(p-1)}^3,222)} by using Proposition \ref{ex-ge-Ex}. 

In the same line of reasoning since any $\Ll^{(r)}_{\ell_1,\dots,\ell_r}$-free set in $\Z_n$ is also a $\Ll^{(r)}_{\ell_1,\dots,\ell_r}$-free set in $\{1,\dots, n\}$ then Conjecture \ref{con1} implies Conjecture \ref{con2}. The converse is not true but the algebraic ideas behind the constructions of  some $K_{\ell_1,\ell_2}$-free graphs can be used in some cases to construct $\Ll_{\ell_1,\ell_2}$-free sets. This was the strategy followed in \cite{PTT} to construct dense $\Ll_{3,3}$-free sets.

\subsection{Extremal problems in infinite sequences of integers}
We consider also infinite $\mathcal L$-free sequences of positive integers. This problem is more difficult than the analogous finite problem, even in the simplest case of $\mathcal L_{2,2}$-free sets (Sidon sequences). Let $A(x)=|A \cap [1,x]|$ be the counting function of any sequence $A$. 
In the light of Conjecture \ref{con1} for the finite case and being optimistic one could believe in the existence of an infinite $\mathcal L^{(r)}_{\ell_1,\dots,\ell_r}$-free sequence  satisfying $A(x)\gg x^{1-1/(\ell_1\cdots \ell_{r-1})}$. Erd\H os \cite{St} proved that it is not true for Sidon sequences, and Peng, Tesoro and Timmons \cite{PTT} proved that neither for $\mathcal L_{\ell_1,\ell_2}$-free sequences this is true. We generalize these results for all $\mathcal L^{(r)}_{\ell_1,\dots,\ell_r}$.
\begin{thm} \label{thm:liminf_Lfree_seq}
If $A$ is an infinite $\mathcal L^{(r)}_{\ell_1,\dots,\ell_r}$-free sequence then
$$\liminf_{x\to \infty}\frac{A(x)}x (x\log x)^{1/(\ell_1\cdots \ell_{r-1})} \ll 1.$$
\end{thm}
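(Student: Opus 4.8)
The plan is to run a counting argument inspired by Erdős's proof for Sidon sequences, combined with the finite upper bound in Theorem~\ref{thm:main-ubound}. Suppose, for contradiction, that $A$ is an infinite $\mathcal L^{(r)}_{\ell_1,\dots,\ell_r}$-free sequence with
$$\frac{A(x)}{x}(x\log x)^{1/(\ell_1\cdots\ell_{r-1})}\to\infty,$$
equivalently $A(x)\gg x^{1-1/(\ell_1\cdots\ell_{r-1})}(\log x)^{1/(\ell_1\cdots\ell_{r-1})}\cdot g(x)$ for some $g(x)\to\infty$. The key point is that a tail of an $\mathcal L$-free sequence is still $\mathcal L$-free, and more importantly any finite chunk $A\cap(N,2N]$ is an $\mathcal L^{(r)}_{\ell_1,\dots,\ell_r}$-free set inside an interval of length $N$, so Theorem~\ref{thm:main-ubound} caps its size by roughly $(\ell_r-1)^{1/(\ell_1\cdots\ell_{r-1})}N^{1-1/(\ell_1\cdots\ell_{r-1})}$. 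Thus the assumed growth rate can only hold if, on most dyadic scales, $A$ essentially saturates the finite bound — and the extra $(\log x)^{1/(\ell_1\cdots\ell_{r-1})}$ factor is precisely what a pigeonhole over $\asymp\log x$ dyadic blocks cannot deliver.

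First I would fix a large parameter and consider the dyadic decomposition of $[1,x]$ into blocks $I_j=(2^{j-1},2^j]$ for $j\le\log_2 x$. Writing $a_j=|A\cap I_j|$, Theorem~\ref{thm:main-ubound} gives $a_j\le c\,(2^j)^{1-1/(\ell_1\cdots\ell_{r-1})}$ with $c=(\ell_r-1)^{1/(\ell_1\cdots\ell_{r-1})}+o(1)$. Summing the geometric-type series $\sum_j a_j$ already yields $A(x)\le c'\,x^{1-1/(\ell_1\cdots\ell_{r-1})}$, which kills any $g(x)\to\infty$ factor but not the logarithm — so a cruder argument suffices only to recover $A(x)\ll x^{1-1/(\ell_1\cdots\ell_{r-1})}$, not the sharper $\liminf$ statement. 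To get the logarithm I would instead argue that if $A(x)/x\cdot(x\log x)^{1/(\ell_1\cdots\ell_{r-1})}$ stays bounded below by a large constant $C$ for all large $x$, then summing the lower bounds $A(2^{k})\gtrsim C\,2^{k(1-1/(\ell_1\cdots\ell_{r-1}))}k^{-1/(\ell_1\cdots\ell_{r-1})}$ against the per-block upper bounds forces, via a telescoping/averaging argument over the $\asymp\log x$ scales, some block $I_j$ to contain $\gtrsim C'\,(2^j)^{1-1/(\ell_1\cdots\ell_{r-1})}$ points with $C'$ arbitrarily large — contradicting the finite bound once $j$ is large. Concretely: from $A(x)\ge C\,x^{1-1/(\ell_1\cdots\ell_{r-1})}(\log x)^{-1/(\ell_1\cdots\ell_{r-1})}$ and $\sum_{j\le m}a_j=A(2^m)$, compare with $\sum_{j\le m}c\,2^{j(1-1/(\ell_1\cdots\ell_{r-1}))}$; the latter sum is $\asymp c\,2^{m(1-1/(\ell_1\cdots\ell_{r-1}))}$ (no log), so for the lower bound to be consistent for every $m$ one needs the partial sums to track the geometric series with a uniform constant, and choosing $m\asymp\log x$ and iterating over a long window of scales produces the needed blow-up in the constant.

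The cleanest route to the logarithm, which I would actually pursue, is the \emph{self-similarity} trick of Erdős: if $A$ is $\mathcal L^{(r)}_{\ell_1,\dots,\ell_r}$-free and dense then for each $k$ the set $A\cap[1,2^k]$ is $\mathcal L$-free in an interval of length $2^k$, but one can also dilate — passing from scale $x$ to scale $x^{1+\epsilon}$, the hypothetical density of $A$ there combined with the $\mathcal L$-freeness at scale $x$ gives a recursive inequality of the shape $A(x^{1+\epsilon})\le A(x)\cdot(\text{something})+c\,x^{(1+\epsilon)(1-1/(\ell_1\cdots\ell_{r-1}))}$; iterating this recursion over $\log\log x$-many scales and solving the recursion is what manufactures the $\log x$ saving, exactly as in the Sidon case and as in \cite{PTT} for $\mathcal L_{\ell_1,\ell_2}$. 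I expect the main obstacle to be bookkeeping the error terms through the iteration: each application of Theorem~\ref{thm:main-ubound} carries an $O(n^{1/2+1/(2\ell_{r-1})-1/(\ell_1\cdots\ell_{r-1})})$ tail, and one must check these secondary terms do not accumulate to swamp the main $(\log x)^{1/(\ell_1\cdots\ell_{r-1})}$ gain — this is routine but delicate, and is the step where the exponent $1/(\ell_1\cdots\ell_{r-1})$ (rather than something weaker) has to be tracked exactly. The algebraic structure of the $\mathcal L$-free condition itself plays no role beyond the black-box finite bound; everything is a density/iteration argument on the counting function $A(x)$.
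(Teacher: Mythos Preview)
Your proposal has a genuine gap: the claim that ``the algebraic structure of the $\mathcal{L}$-free condition itself plays no role beyond the black-box finite bound'' is false, and this is precisely why the argument does not close. Applying Theorem~\ref{thm:main-ubound} separately to each dyadic block $I_j=(2^{j-1},2^j]$ gives only the $\ell^\infty$ information $a_j\le c\,2^{j(1-1/(\ell_1\cdots\ell_{r-1}))}$. These per-block bounds are fully consistent with a sequence satisfying $a_j\sim c\,2^{j(1-1/(\ell_1\cdots\ell_{r-1}))}$ for \emph{every} $j$, which yields $A(x)\asymp x^{1-1/(\ell_1\cdots\ell_{r-1})}$ for all $x$ and hence no log saving on the $\liminf$. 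Your ``iteration over a long window of scales'' does not produce a blow-up of constants: the hypothetical lower bound $A(2^m)\ge C\,2^{m(1-\alpha)}m^{-\alpha}$ and the upper bound $\sum_{j\le m}a_j\le c'\,2^{m(1-\alpha)}$ are compatible for every $m$ simultaneously, so nothing forces any constant to grow. The ``self-similarity trick'' you sketch is not how Erd\H{o}s's argument works and does not lead to a recursion of the shape you write down.

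What the paper actually uses is a \emph{global} $\ell^p$-type constraint on block sizes that is strictly stronger than the per-block bound. One partitions $(0,N^2]$ into $N$ intervals $I_j$ of length $N$, sets $A_j=A\cap I_j$, and proves (Lemma~\ref{sum}) that
\[
\sum_{j\le N}|A_j|^{\ell_1\cdots\ell_{r-1}}\ll N^{\ell_1\cdots\ell_{r-1}-1}.
\]
The black-box finite bound applied blockwise gives only $\sum_j|A_j|^{\ell_1\cdots\ell_{r-1}}\ll N^{\ell_1\cdots\ell_{r-1}}$, off by a full factor of $N$. Lemma~\ref{sum} genuinely exploits the $\mathcal{L}$-free property \emph{across} blocks: for $r=2$ one counts congruence classes of $\ell_1$-subsets of $A$ with diameter $<N$ (at most $\ell_2$ members per class, by $\mathcal{L}_{\ell_1,\ell_2}$-freeness), and for $r\ge 3$ one inducts via Lemmas~\ref{lem:TBD-1} and~\ref{lem:over}. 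With this $\ell^p$ bound in hand, H\"older applied to the weighted sum $S=\sum_j |A_j|\,j^{-(1-1/(\ell_1\cdots\ell_{r-1}))}$ gives $S\ll (N\log N)^{1-1/(\ell_1\cdots\ell_{r-1})}$, while summation by parts gives $S\gg \sigma(N)\,(N\log N)^{1-1/(\ell_1\cdots\ell_{r-1})}$ where $\sigma(N)=\inf_{y>N}A(y)y^{-1}(y\log y)^{1/(\ell_1\cdots\ell_{r-1})}$; comparing the two yields $\sigma(N)\ll 1$. The logarithm arises from the harmonic-type weight paired via H\"older with the $\ell^{\ell_1\cdots\ell_{r-1}}$ bound, not from any scale iteration.
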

Hence a natural question is
\emph{whether or not for any $\epsilon>0$ there exists a $\mathcal L^{(r)}_{\ell_1,\dots,\ell_r}$-free sequence with 
\begin{equation} \label{eq:Erdos-conj}
A(x)\gg x^{1-1/(\ell_1\cdots \ell_{r-1})-\epsilon}.
\end{equation}
}
A positive answer to this question was conjectured by Erd\H os in the case of Sidon sequences. The greedy algorithm provides a Sidon sequence $A$ with $A(x)\gg x^{1/3}$. This was the densest infinite Sidon sequences known during nearly 50 years. Ajtai, Koml\'os and Szemer\'edi \cite{AKS} proved the existence of a Sidon sequence with $A(x)\gg (x\log x)^{1/3}$ and Ruzsa \cite{Ru2} proved the existence of a Sidon sequence with $A(x)\gg x^{\sqrt 2-1+o(1)}$. The first author \cite{Ci4} constructed an explicit Sidon sequence with similar counting function. 

To attain the exponent in \eqref{eq:Erdos-conj} looks like a difficult problem. It even seems difficult to get a exponent greater than $1- \frac{\ell_1+\cdots+\ell_r -r}{\ell_1 \cdots \ell_r-1}$, which is the exponent obtained in Theorem \ref{thm:main-lbound} for finite sets.
 
The probabilistic method used in Theorem \ref{thm:main-lbound} to construct dense finite $\mathcal L^{(r)}_{\ell_1,\dots,\ell_r}$-free sets might be 
adapted to construct infinite $\mathcal L^{(r)}_{\ell_1,\dots,\ell_r}$-free sequences with large counting function in order to prove that
for every $\epsilon > 0$  there exist an infinite $\Ll^{(r)}_{\ell_1, \ldots, \ell_r}$-free sequence $A$ satisfying
$$A(x) \gg x^{1-\gamma-\epsilon}, \text{ with } \gamma=\frac{\ell_1+\cdots+\ell_r-r}{\ell_1 \cdots \ell_r-1}.$$

We have not found a proof for the general case, however we have succeeded in two particular cases.
\begin{thm} \label{thm:2L-free_seq}
For any $\ell \ge 2$ and for any $\epsilon >0$ there exists an infinite $\mathcal L_{2,\ell}$-free sequence with $$A(x)\gg x^{1-\frac{\ell}{2\ell-1}-\epsilon}.$$
\end{thm}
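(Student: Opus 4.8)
\textbf{Proof strategy for Theorem~\ref{thm:2L-free_seq}.}

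The plan is to adapt the probabilistic construction behind Theorem~\ref{thm:main-lbound} to the infinite setting. Recall that a set is $\Ll_{2,\ell}$-free if and only if every non-zero integer has at most $\ell-1$ representations as a difference of two of its elements; equivalently, it contains no \emph{minimal bad configuration}, by which I mean a set $\{c_1,\dots,c_\ell,\,c_1+d,\dots,c_\ell+d\}$ with $c_1<\dots<c_\ell$ and $d\ge1$ (the degenerate configurations, in which some $c_i+d$ coincides with some $c_j$, will be handled separately). We may assume $\epsilon$ is small, the conclusion being weaker for larger $\epsilon$. First I would fix $\alpha=\frac{\ell}{2\ell-1}+\epsilon'$ with $0<\epsilon'\le\epsilon$; since $\frac12<\frac{\ell}{2\ell-1}\le\frac23$ for every $\ell\ge2$, we may take $\frac12<\alpha<1$. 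Let $A_0$ be the random set containing each integer $n\ge2$ independently with probability $p(n)=n^{-\alpha}$. Then $\E\bigl[|A_0\cap[1,x]|\bigr]\asymp x^{1-\alpha}$, and a routine Chernoff bound together with Borel--Cantelli over $x=2^{k}$ (using monotonicity in $x$) shows that almost surely $|A_0\cap[1,x]|\gg x^{1-\alpha}$ for all large $x$.

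Let $B$ be the set of all integers lying in some minimal bad configuration contained in $A_0$, and put $A=A_0\setminus B$. Then $A$ is $\Ll_{2,\ell}$-free: a bad configuration inside $A$ would also be a bad configuration inside $A_0$, hence would contain a minimal one, all of whose elements belong to $B$ and are therefore absent from $A$ --- a contradiction. Thus it suffices to prove that almost surely $|B\cap[1,x]|=o(x^{1-\alpha})$; combined with the first paragraph this gives $A(x)\gg x^{1-\alpha}=x^{1-\frac{\ell}{2\ell-1}-\epsilon'}$, and taking $\epsilon'=\epsilon$ proves the theorem (in particular $A$ is infinite). Since the least element of a minimal bad configuration is $c_1$, we have $|B\cap[1,x]|\le 2\ell\,N(x)$, where $N(x)$ is the number of minimal bad configurations $C\subseteq A_0$ with $c_1\le x$.

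The heart of the argument is a first moment estimate for $N(x)$. Organising configurations by $d$ and $c_1<\dots<c_\ell$, one gets
\[
\E\bigl[N(x)\bigr]\ \le\ \sum_{c_1\le x}p(c_1)\sum_{d\ge1}p(c_1+d)\,\frac{1}{(\ell-1)!}\Bigl(\sum_{c\ge1}p(c)p(c+d)\Bigr)^{\ell-1}.
\]
The key input is the bound $\sum_{c\ge1}p(c)p(c+d)\ll d^{\,1-2\alpha}$ for $d\ge1$, valid since $\frac12<\alpha<1$. Substituting it, splitting each remaining sum over $d$ at $d\asymp c_1$, and using $\alpha<1$ again, the exponents telescope and one obtains $\E[N(x)]\ll x^{\,\ell(1-2\alpha)+1}$ (and $\E[N(x)]=O(1)$ as soon as $\alpha\ge\frac{\ell+1}{2\ell}$). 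The decisive observation is that
\[
\bigl(\ell(1-2\alpha)+1\bigr)-(1-\alpha)\ =\ \ell-\alpha(2\ell-1)\ =\ -\epsilon'(2\ell-1)\ <\ 0,
\]
the sign being negative precisely because $\alpha>\frac{\ell}{2\ell-1}$. Hence $\E[N(x)]\ll x^{1-\alpha-\delta}$ with $\delta=\epsilon'(2\ell-1)>0$, and Markov's inequality together with Borel--Cantelli over $x=2^{k}$ upgrades this to the almost sure bound $|B\cap[1,x]|\ll x^{1-\alpha-\delta/2}=o(x^{1-\alpha})$, as required. Finally, the degenerate configurations (arithmetic progressions of length up to $\ell+1$, together with mixed types) have strictly fewer free parameters and are dealt with by the same computation, contributing $o(x^{1-\alpha})$ --- indeed only $O(1)$ in the extreme case of an arithmetic progression of length $\ell+1$, since there the relevant threshold is $\alpha>\frac{2}{\ell+1}$, which holds because $\alpha>\frac{\ell}{2\ell-1}\ge\frac{2}{\ell+1}$ for $\ell\ge2$.

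The step I expect to be the main obstacle is the estimate for $\E[N(x)]$. A minimal bad configuration may have an arbitrarily large common difference $d$ and thus straddle widely separated scales, so the sum over $d$ runs over all positive integers; the entire saving comes from the decay $\sum_{c}p(c)p(c+d)\ll d^{\,1-2\alpha}$ (which already forces $\alpha>\frac12$) combined with there being $\ell-1$ free translates to sum over, and this is exactly what produces the threshold $\alpha>\frac{\ell}{2\ell-1}$. The same scheme does not obviously extend to general $\Ll^{(r)}_{\ell_1,\dots,\ell_r}$, because there the minimal obstructions become higher-dimensional and the analogous nested sums need not converge.
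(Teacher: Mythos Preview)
Your argument is correct and the exponent calculation is exactly right: the identity $(1+\ell(1-2\alpha))-(1-\alpha)=\ell-\alpha(2\ell-1)=-\epsilon'(2\ell-1)$ is indeed the crux, and your key lemma $\sum_{c\ge 1}c^{-\alpha}(c+d)^{-\alpha}\ll d^{1-2\alpha}$ (valid for $\tfrac12<\alpha<1$) makes the nested sums telescope just as you describe. The treatment of degenerate configurations is sketchy but salvageable; the arithmetic-progression case you single out does give $O(1)$ since $\alpha>\tfrac{2}{\ell+1}$, and the intermediate degeneracies can be handled by the same splitting, each extra coincidence killing one free parameter while gaining at most one factor $p(\cdot)^{-1}$.

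The paper takes a genuinely different route. Rather than sampling from all of $\mathbb N$ and dealing with degenerate sumsets afterwards, it first plants a Behrend (AP-free) set $B_m$ inside each block $I_m=[4^{m+2},4^{m+2}+4^m)$ and samples only from $B=\bigcup_m B_m$; by Lemma~\ref{lem:degen-sumset-contains-AP} every sumset in $B$ is automatically non-degenerate, so degeneracies never arise. The expectation bound is then obtained not via your correlation estimate $\sum_c p(c)p(c+d)\ll d^{1-2\alpha}$ but by classifying each obstruction $\{0,x\}+\{y_1,\dots,y_\ell\}$ according to the position of $x$ among the $y_i$ and using only the monotonicity of $f(\nu)=\nu^{-\alpha}$ to bound each factor $f(x+y_i)$ by either $f(x)$ or $f(y_i)$; this leads to the same exponent $1+\ell(1-2\alpha)$ per block. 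The paper also deletes only $\max(X)$ from each obstruction rather than all of $X$, though as you note this costs only a factor $2\ell$.

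Your approach is more direct and avoids both the block structure and the Behrend input, at the price of the separate degenerate-case analysis. The paper's route, by contrast, dispatches degeneracies uniformly via Lemma~\ref{lem:degen-sumset-contains-AP}, and its block framework is reused verbatim for the Hilbert-cube case (Theorem~\ref{thm:L2...2-free_seq}), where the combinatorics of the obstruction are different but the same monotonicity trick applies.
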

Note however that the constructions in \cite{Ru2} and \cite{Ci4} provide a greater exponent for $\ell=2$ and $\ell=3$.

\begin{thm} \label{thm:L2...2-free_seq}
For any $r\ge 2$ and for any $\epsilon >0$ there exists an infinite $\mathcal L^{(r)}_{2,\dots,2}$-free sequence with 
$$A(x)\gg x^{1-\frac{r}{2^{r}-1}-\epsilon}.$$
\end{thm}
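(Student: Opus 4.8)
The plan is to adapt to the infinite setting the probabilistic construction behind Theorem~\ref{thm:main-lbound}: I would build a random sequence $B$, bound the expected number of forbidden sumsets $L_1+\dots+L_r$ (with $|L_i|=2$) that it contains, and then delete one element from each such sumset that survives, so that what remains is $\mathcal{L}^{(r)}_{2,\dots,2}$-free and still has the desired counting function. Here $\gamma:=\frac{\ell_1+\dots+\ell_r-r}{\ell_1\cdots\ell_r-1}=\frac{r}{2^{r}-1}$ is the exponent defect from Theorem~\ref{thm:main-lbound} in this case, and $\epsilon>0$ is fixed. The first subtlety is that the random selection cannot be performed inside $\{1,2,3,\dots\}$ itself: a $\mathcal{L}^{(r)}_{2,\dots,2}$-free set contains no $(r+1)$-term arithmetic progression, because $\{a,a+d,\dots,a+rd\}=\{a,a+d\}+\{0,d\}+\dots+\{0,d\}$, and for large $r$ a uniformly random subset of $[1,x]$ of density $x^{-\gamma}$ already contains polynomially more $(r+1)$-term progressions than it has elements, so a deletion step would wipe it out. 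So I would instead select at random inside a ground set $\mathcal G$ that, on each dyadic scale, is a dilated copy of a near-extremal finite $\mathcal{L}^{(r)}_{2,\dots,2}$-free set supplied by Theorem~\ref{thm:main-lbound}; such a $\mathcal G$ contains no forbidden configuration within a single scale, so every forbidden configuration of $B$ is forced to straddle several scales, and the scales can be chosen to make straddling configurations rare.

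Concretely, with $I_k=(2^k,2^{k+1}]$, I would take $\mathcal G\cap I_k$ to be a dilate of a finite $\mathcal{L}^{(r)}_{2,\dots,2}$-free set of size $\gg 2^{k(1-\gamma-\epsilon/4)}$ inside $I_k$, and form $B$ by keeping each element of $\mathcal G\cap I_k$ independently with probability $p_k\asymp 2^{-k\epsilon/4}$. Since the expectation of $B(2^{k+1})$ is $\asymp\sum_{j\le k}|\mathcal G\cap I_j|\,p_j\asymp 2^{k(1-\gamma-\epsilon/2)}$, a Chernoff estimate inside each block together with the Borel--Cantelli lemma would give $B(x)\gg x^{1-\gamma-\epsilon}$ almost surely for all large $x$.

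The heart of the argument is bounding the expected number of forbidden sumsets inside $B\cap[1,x]$. A sumset $L_1+\dots+L_r$ with $|L_i|=2$ is, after translation, the set of subset sums of a gap vector $(d_1,\dots,d_r)$, and its combinatorial type is governed by the lattice of $\mathbb Z$-linear relations among the $d_i$, which fixes both the number of free parameters and the number of distinct vertices. For configurations confined to a single block there is nothing to check, since that block is $\mathcal{L}^{(r)}_{2,\dots,2}$-free; for a straddling configuration I would decompose it into $2^{r-j}$ translated clusters, each an at most $j$-dimensional sumset sitting inside one block, and, choosing the dilation factors defining $\mathcal G$ generically, show that the expected number of such configurations with largest vertex $\le x$ is $o(x^{1-\gamma-\epsilon})$ for every type. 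For the generic (proper) type the relevant estimate is, as in the finite case, governed by the identity $(r+1)-2^{r}\gamma=1-\gamma$, so that the extra sparsification by $p_k$ (together with $\mathcal G$ being thinner than a full interval) makes the expected number negligible compared with $x^{1-\gamma-\epsilon}$. Combining these estimates with Markov's inequality along a sparse sequence of values of $x$, interpolation, and Borel--Cantelli would give that almost surely the total number $N(x)$ of forbidden sumsets with all vertices in $B\cap[1,x]$ is $o(x^{1-\gamma-\epsilon})$ for all large $x$. Finally I would process the forbidden sumsets in increasing order of their largest vertex, removing that vertex whenever the sumset is still entirely present; the resulting set $A$ is $\mathcal{L}^{(r)}_{2,\dots,2}$-free and satisfies $A(x)\ge B(x)-N(x)\gg x^{1-\gamma-\epsilon}$ for all large $x$, and since $\epsilon$ is arbitrary this yields the theorem.

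I expect the main obstacle to be precisely the uniform-in-$x$ control of the degenerate types in the previous paragraph. In the finite Theorem~\ref{thm:main-lbound} one needs such an estimate only for a single $x$, whereas here it must hold for all large $x$ simultaneously; moreover the abundance of long arithmetic progressions (and of other low-dimensional Hilbert cubes) in unstructured random sets is what forces both the structured ground set and the delicate choice of scales, and keeping the straddling-configuration count small without flattening the counting function is the delicate balance. This is also the step that ceases to work once the $\ell_i$ are allowed to exceed $2$, which is why only the case $\mathcal{L}^{(r)}_{2,\dots,2}$ (and, separately, $\mathcal{L}_{2,\ell}$ in Theorem~\ref{thm:2L-free_seq}) is treated here rather than the general $\mathcal{L}^{(r)}_{\ell_1,\dots,\ell_r}$.
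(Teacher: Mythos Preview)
Your overall architecture (structured ground set, independent random selection, Chernoff/Borel--Cantelli for the lower bound on $B(x)$, Markov/Borel--Cantelli for the obstruction count, then delete the top vertex of each surviving cube) matches the paper. The key divergence is the choice of ground set, and this is where your proposal has a genuine gap.

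The paper does \emph{not} use dilates of finite $\mathcal L^{(r)}_{2,\dots,2}$-free sets on adjacent dyadic scales. It places well-separated intervals $I_m=[4^{m+2},4^{m+2}+4^m)$ and inside each takes a Behrend set $B_m$ (no $3$-term AP). The separation is chosen so that $\bigcup_m B_m$ is globally AP-free. The payoff is Lemma~\ref{lem:degen-sumset-contains-AP}: any degenerate sumset contains a $3$-AP, so every Hilbert cube that lands in $S\subset\bigcup_m B_m$ is automatically \emph{non-degenerate}, i.e.\ has exactly $2^r$ distinct vertices. With independent selection at probability $f(\nu)=\nu^{-\alpha}$, $\alpha=\tfrac{r}{2^r-1}+\tfrac{\epsilon}{2}$, one then gets a clean bound $\Pr{X\subset S}\le y_0^{-\alpha}\prod_{i=1}^r y_i^{-2^{i-1}\alpha}$ (after ordering $x_1\le\cdots\le x_r$, $y_i=x_0+x_i$), and a short iterated-sum computation gives $\mathbb E(N(S_m))\ll 4^{m(1+r-2^r\alpha)}=o(|S_m|)$.

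In your scheme the ground set in each block is $\mathcal L^{(r)}_{2,\dots,2}$-free, which kills cubes confined to one block but says nothing about $j$-dimensional sub-cubes for $j<r$, nor about degeneracy. Your treatment of the straddling configurations is the gap: the sentence ``choosing the dilation factors defining $\mathcal G$ generically, show that the expected number of such configurations \dots\ is $o(x^{1-\gamma-\epsilon})$ for every type'' is not an argument. A straddling cube decomposes into $2^{r-j}$ translates of a $j$-dimensional sub-cube, but the number of $j$-dimensional cubes inside a single block of $\mathcal G$ is not controlled by the $\mathcal L^{(r)}_{2,\dots,2}$-freeness of that block, and with adjacent dyadic intervals the cross-block alignments are not obviously rare; you have neither specified what ``generic'' means nor shown any estimate. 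Moreover, degenerate cubes (fewer than $2^r$ vertices but more than $r+1$) are not ruled out in your ground set, and each degenerate type needs its own count with a smaller exponent on the probability side and a larger one on the parameter-count side.

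In short: the paper sidesteps the whole ``type'' analysis by making the ground set AP-free (Behrend), which forces all obstructions to be non-degenerate and reduces the expectation bound to a single explicit nested sum. Your route might be salvageable, but as written the straddling/degenerate count is missing, and that is precisely the hard part.
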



%
%
%

\

The rest of this paper is organized as follows.
Several auxiliary results that will be used in the sequel are included in section \ref{sec:lemmata}.
In section \ref{sec:finite-Lfree-sets} we discuss finite sets that are $\Ll$-free and we prove Theorems \ref{thm:main-ubound}, \ref{thm:main-lbound}, \ref{lbound:Ex(Z_{(p-1)}^3,222)}, and Proposition \ref{prop:E(n_1n_2..n_k)}.
In section \ref{sec:Kfree-hypergraphs} we prove Proposition \ref{ex-ge-Ex} and Theorem \ref{thm:ubound_Kl..l-free_HGraph}.
In section \ref{sec:Lfree-sequences} we discuss infinite $\Ll$-free sequences of integers and prove Theorems \ref{thm:liminf_Lfree_seq},  \ref{thm:2L-free_seq} and \ref{thm:L2...2-free_seq}.
\section{Lemmata and auxiliary theorems.} \label{sec:lemmata}
\begin{lem} \label{lem:TBD-1}
Let $r\ge 2$ and $ 2\le \ell_1\le \cdots \le \ell_r$ be given.
If $A$ is a $\Ll_{\ell_1,\ldots,\ell_r}^{(r)}$-free set in an abelian group $G$ then
the set $$(A+x_1)\cap \cdots \cap (A+x_{\ell_1})$$ is a $\mathcal L^{(r-1)}_{\ell_2,\dots,\ell_r}$-free set for any collection $\{x_1,\dots,x_{\ell_1}\}$ of $\ell_1$ distinct elements of $G$.
\end{lem}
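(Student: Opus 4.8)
The plan is to argue by contraposition: suppose that the set $S := (A+x_1)\cap\cdots\cap(A+x_{\ell_1})$ is \emph{not} $\Ll^{(r-1)}_{\ell_2,\dots,\ell_r}$-free, i.e.\ it contains a sumset $L_2+\cdots+L_r$ with $|L_i|=\ell_i$ for $i=2,\dots,r$; I want to produce inside $A$ a sumset $L_1+L_2+\cdots+L_r$ with $|L_1|=\ell_1$, contradicting the hypothesis that $A$ is $\Ll^{(r)}_{\ell_1,\dots,\ell_r}$-free. The natural candidate for the first summand is $L_1 := \{x_1,\dots,x_{\ell_1}\}$, which has exactly $\ell_1$ elements by assumption.

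\medskip

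First I would record the key membership observation: for any $y$, one has $y\in S$ if and only if $y - x_j \in A$ for every $j=1,\dots,\ell_1$, equivalently $x_j + z \in A$ for all $j$ whenever $z = y - x_j$ ranges appropriately — more cleanly, $y\in S \iff y \in A + x_j$ for all $j \iff x_j + (y-x_j)\in A$; the point is simply that $w + y \in A$ for every $w\in L_1$ is \emph{not} quite what we get, so I must be careful. Let me restate it correctly: $y\in S$ means $y\in A+x_j$ for each $j$, i.e.\ there is $a_j\in A$ with $y = a_j + x_j$; this does not immediately give $x_j + y\in A$. The correct formulation is to replace $A+x_j$ by $A - x_j$ if one wants additive compatibility, but since the lemma is stated with $A+x_j$, the relation we actually use is: $y \in S \iff y - x_j \in A$ for all $j$, i.e.\ $(-x_j) + y \in A$ for all $j$. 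So I would instead take $L_1 := \{-x_1,\dots,-x_{\ell_1}\}$, which still has $\ell_1$ distinct elements, and then $y\in S$ implies $w + y\in A$ for every $w\in L_1$.

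\medskip

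Next, with that fixed, take the sumset $L_2+\cdots+L_r \subseteq S$ guaranteed by the failure of $\Ll^{(r-1)}$-freeness. For any element $s = \lambda_2+\cdots+\lambda_r$ with $\lambda_i\in L_i$ we have $s\in S$, hence $w + s \in A$ for every $w\in L_1$. Ranging over all choices of the $\lambda_i$ and all $w\in L_1$, this says precisely that $L_1 + L_2 + \cdots + L_r \subseteq A$. Since $|L_i| = \ell_i$ for $i=1,\dots,r$, this is a forbidden configuration, contradicting that $A$ is $\Ll^{(r)}_{\ell_1,\dots,\ell_r}$-free. This completes the contrapositive, hence the lemma.

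\medskip

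I do not expect a serious obstacle here; the one point requiring care — and the only place where a careless write-up could go wrong — is matching the sign convention so that membership in the intersection of translates of $A$ translates into a sumset genuinely contained in $A$ (choosing $L_1 = \{-x_1,\dots,-x_{\ell_1}\}$, or equivalently working with intersections $A-x_j$, resolves this). A secondary routine check is that $|L_1| = \ell_1$ exactly, which holds because the $x_j$ are assumed distinct and negation is injective. Everything else is a direct unwinding of Definition~\ref{def}.
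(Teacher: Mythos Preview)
Your proposal is correct and follows essentially the same argument as the paper: assume $L_2+\cdots+L_r\subset (A+x_1)\cap\cdots\cap(A+x_{\ell_1})$, set $L_1=\{-x_1,\dots,-x_{\ell_1}\}$, and conclude $L_1+L_2+\cdots+L_r\subset A$. The paper's version is simply the one-line distillation of your argument, omitting the exploratory sign discussion.
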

\begin{proof}
If $L_2+\cdots +L_{r}\subset (A+x_1)\cap \cdots \cap (A+x_{\ell_1})$ then $L_2+\cdots +L_{r}-x_i\subset A$ for $i=1,\dots ,\ell_1$ and so $L_1+L_2+\cdots +L_{r}\subset  A$ for $L_1=\{-x_1,\dots,-x_{\ell_1}\}$.
\end{proof}

\begin{defn} \label{degenerated-sumset}
We say that a sumset $L_1+\cdots +L_r$ is \emph{degenerated} if $|L_1+\cdots +L_r|<|L_1|\cdots |L_r|$, that is to say some of all the possible sums are repeated.
 \end{defn}
\begin{lem} \label{lem:degen-sumset-contains-AP}
 If a sumset is degenerated then it contains an arithmetic progression.
 \end{lem}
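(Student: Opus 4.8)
The plan is to prove the contrapositive-flavored statement directly: if $L_1+\cdots+L_r$ is degenerated, meaning two of the formal sums coincide, then we can extract from the sumset an arithmetic progression of length at least $2$ (which is all that is needed, since every $2$-term set is trivially an AP, so really the content is that it contains a \emph{nontrivial} AP — I would state and use it as: the sumset contains an AP of length $\ge 2$ with nonzero common difference, equivalently it contains some value $v$ together with $v+d$ and we can iterate). Let me reconsider what is actually needed downstream; the honest reading is that "contains an arithmetic progression" should mean a three-term AP or longer, since a two-term AP is vacuous. So the real claim to prove is: a degenerated sumset contains a $3$-term arithmetic progression.

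The key observation is the following. Suppose $\lambda_1+\cdots+\lambda_r = \mu_1+\cdots+\mu_r$ with $\lambda_i,\mu_i\in L_i$ and $(\lambda_1,\dots,\lambda_r)\ne(\mu_1,\dots,\mu_r)$. Let $S=\{i: \lambda_i\ne\mu_i\}$, which is nonempty. First I would reduce to a single coordinate: fix all summands outside $S$ at their common value; then within the coordinates of $S$ we have two distinct choices giving the same sum. By a swapping/telescoping argument — change one coordinate of $S$ at a time from the $\lambda$-value to the $\mu$-value — one produces a chain of sumset elements; if at some intermediate step we hit a third distinct value we are closer to done, but the clean approach is different. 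Instead, pick any single index $j\in S$ and set $d=\mu_j-\lambda_j\ne 0$. Consider the element $x=\lambda_1+\cdots+\lambda_r\in L_1+\cdots+L_r$. Then $x+d = \lambda_1+\cdots+\mu_j+\cdots+\lambda_r\in L_1+\cdots+L_r$ as well (just replace the $j$-th summand). So far this only gives a $2$-term AP $\{x,x+d\}$. To get a third term $x+2d$ or $x-d$, I would use the \emph{other} coordinates in $S$: since $\sum_{i\in S}\lambda_i=\sum_{i\in S}\mu_i$, we have $\sum_{i\in S\setminus\{j\}}(\mu_i-\lambda_i) = -(\mu_j-\lambda_j)=-d$.

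So here is the cleaner route I would actually write down. Let $x = \lambda_1+\cdots+\lambda_r$ and let $y_i = x + (\mu_i-\lambda_i)$ for $i\in S$; each $y_i$ lies in the sumset, and $\sum_{i\in S}(y_i - x) = 0$. If $|S|\ge 2$, among the displacements $\delta_i := \mu_i-\lambda_i$ ($i\in S$), all nonzero and summing to $0$, there must exist two, say $\delta_j$ and $\delta_k$, that are "on the same side" in a suitable sense; more robustly, consider the element $z = x + \delta_j + \delta_k = \lambda_1+\cdots+\mu_j+\cdots+\mu_k+\cdots+\lambda_r$, which is again in the sumset. Now $x$, $x+\delta_j$, and $x+\delta_j+\delta_k = z$ together with the relation structure — hmm, this is a "generalized AP" (Hilbert-cube-like), not literally an AP. The genuinely correct statement and the one the authors surely intend is via the single-coordinate case: reduce to $r=1$, where a degenerated "sumset" is just a set $L$ with $|L+\{0\}|<|L|$, impossible, so the reduction must keep $r\ge 2$ or push the repetition into one coordinate by the telescoping swap above, at which point \emph{one} coordinate $L_j$ contributes two equal partial sums in a way that forces $L_j$ itself...

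Let me settle on the argument I am most confident is what's meant and which is correct. Telescope: replacing coordinates of $S$ one at a time from $\lambda$ to $\mu$, we get a sequence $x=x_0, x_1,\dots,x_{|S|}=x$ of sumset elements with $x_{t}-x_{t-1}=\delta_{i_t}\ne 0$; since the sequence starts and ends at $x$ but has nonzero steps, \emph{not all partial sums} $x_t$ are equal, yet the multiset of steps sums to zero — in particular there exist indices with $x_t = x_{t'}$ for $t\ne t'$ forcing some sub-sum of a sub-multiset of the $\delta$'s to vanish; iterating, we reach a \emph{minimal} vanishing sub-multiset $\{\delta_i: i\in T\}$, $T\subseteq S$, $|T|\ge 2$, with no proper nonempty sub-sum zero. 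Order $T=\{i_1,\dots,i_m\}$; the partial sums $p_0=0, p_1=\delta_{i_1}, p_2=\delta_{i_1}+\delta_{i_2},\dots,p_m=0$ are then \emph{all distinct except $p_0=p_m$}, and $x+p_0, x+p_1,\dots,x+p_m$ all lie in the sumset. This is an $m$-element subset, but to get an AP I would finally invoke: a minimal zero-sum sequence of length $m$ in $\mathbb Z$ (or torsion-free case) can't have $m\ge 3$ with all entries nonzero unless... actually in $\mathbb Z$ a minimal zero-sum sequence has all entries of the same sign except one — no. I think the main obstacle, and where I would focus, is precisely pinning down this combinatorial lemma about zero-sum relations forcing an AP; I suspect the authors handle it by noting $L_j + (\text{fixed sum}) \subseteq$ sumset and if within $L_j$ we have $a+c = b+b$ that's a $3$-AP $\{a,b,c\}$ translated, and reduce the general degeneracy to such a $3$-term relation in one coordinate — so the crux is the reduction ``degenerate sumset $\Rightarrow$ some $L_j$ contains a $3$-term AP or the fixed-coordinates trick applies,'' and I would prove it by the telescoping argument above combined with choosing the relation to involve the fewest changed coordinates.
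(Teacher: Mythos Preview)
Your proposal never arrives at a complete argument, and the difficulty you struggle with dissolves once you notice the one idea you are missing: use \emph{both} representations of the repeated value, not just one.

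Concretely, suppose $\lambda_1+\cdots+\lambda_r=\mu_1+\cdots+\mu_r$ with $\lambda_k\ne\mu_k$ for some $k$, and set $d=\lambda_k-\mu_k\ne 0$. Write $b$ for this common value. From the $\lambda$-representation, replacing $\lambda_k$ by $\mu_k$ gives the sumset element
\[
a=\lambda_1+\cdots+\lambda_{k-1}+\mu_k+\lambda_{k+1}+\cdots+\lambda_r=b-d.
\]
From the $\mu$-representation of the \emph{same} $b$, replacing $\mu_k$ by $\lambda_k$ gives the sumset element
\[
c=\mu_1+\cdots+\mu_{k-1}+\lambda_k+\mu_{k+1}+\cdots+\mu_r=b+d.
\]
Thus $a,b,c$ is a genuine $3$-term arithmetic progression inside the sumset. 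This is exactly the paper's proof.

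You got as far as $b$ and $b-d$ (your ``$2$-term AP''), but then tried to manufacture $b-2d$ or $b+d$ by manipulating the \emph{other} coordinates of the $\lambda$-representation, which sent you into telescoping chains and minimal zero-sum sequences. None of that is needed: the third term comes for free from the second representation of $b$, by swapping the \emph{same} coordinate $k$ in the opposite direction.
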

 \begin{proof}Consider the sumset $X=L_1+\cdots +L_r$
Suppose that $x_1+\cdots +x_r=y_1+\cdots +y_r$ with $x_i,y_i\in L_i \; (1 \le i \le r)$, and with $x_k\not=y_k$, say $x_k>y_k$, for some $k$. Then the three following elements of $X$ form an arithmetic progression of difference $x_k-y_k$:
$$x_1+\cdots+x_{k-1}+y_k+x_{k+1}+\cdots+x_r,\quad x_1+\cdots +x_r,\quad y_1+\cdots +y_{k-1}+x_k+y_{k+1}+\cdots+y_r.$$
\end{proof}

 \begin{lem}\label{countsumset}Given $r \ge 2$ and $2\le \ell_1\le \cdots \le \ell_r$, there are at most $n^{\ell_1+\cdots+\ell_r-r+1}$ sumsets $X\subset \{1,\dots ,n\}$ of the form $X=L_1+\cdots +L_r$ with $|L_i|=\ell_i,\ i=1,\dots,r$.
 \end{lem}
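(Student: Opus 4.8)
The idea is to count the number of ways to choose the ingredients $L_1, \dots, L_r$ that produce a sumset inside $\{1, \dots, n\}$. First I would observe that if $X = L_1 + \cdots + L_r \subset \{1, \dots, n\}$, then after translating each $L_i$ so that its minimum element is $0$ (which does not change $X$ except by a global translation), we may assume each $L_i$ contains $0$ and $\min X = 0$, $\max X \le n-1$. Consequently every element of every $L_i$ lies in $\{0, 1, \dots, n-1\}$, since $\max L_i \le \max X \le n$. So a crude count would give at most $n^{\ell_1 + \cdots + \ell_r}$ choices, but we want to save a factor of $n^{r-1}$.

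**Key steps.** The saving comes from two sources. Each $L_i$ contains $0$ by our normalization, so it is determined by its remaining $\ell_i - 1$ elements, each lying in $\{1, \dots, n-1\}$; this already gives at most $n^{\ell_1 + \cdots + \ell_r - r}$ choices for the tuple $(L_1, \dots, L_r)$ up to the overall translation. Finally, the sumset $X$ itself, once the normalized tuple is fixed, can be translated by any amount in $\{0, 1, \dots, n-1\}$ and still (potentially) lie in $\{1, \dots, n\}$, contributing a further factor of at most $n$. Multiplying, the number of sumsets $X \subset \{1, \dots, n\}$ of the prescribed form is at most $n \cdot n^{\ell_1 + \cdots + \ell_r - r} = n^{\ell_1 + \cdots + \ell_r - r + 1}$, as claimed. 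One should double-check the edge bookkeeping: distinct normalized tuples could in principle produce the same $X$, but that only helps (it makes the true count smaller), so the upper bound stands; and allowing $0$ as an element is harmless since we are counting an upper bound and can simply work in $\Z$ and intersect with the target interval at the end.

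**Main obstacle.** There is no serious obstacle here — the statement is a simple counting lemma. The only thing to be careful about is making the normalization rigorous: one must phrase it as "every sumset $X \subset \{1,\dots,n\}$ of this form can be written as $t + (L_1' + \cdots + L_r')$ where $0 \in L_i'$ for each $i$ and $t \in \{1, \dots, n\}$," and then bound the number of choices of $t$ and of each $L_i'$ separately. I would write this cleanly in one short paragraph, noting that each $L_i'$ is a subset of $\{0, 1, \dots, n-1\}$ containing $0$, hence has at most $\binom{n-1}{\ell_i - 1} \le n^{\ell_i - 1}$ possibilities, and $t$ has at most $n$ possibilities, giving the product bound $n^{\ell_1 + \cdots + \ell_r - r + 1}$.
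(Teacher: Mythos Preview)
Your proposal is correct and follows essentially the same approach as the paper: normalize each $L_i$ to $L_i'$ with $\min L_i'=0$, write $X=t+L_1'+\cdots+L_r'$, and bound the number of choices for $t$ by $n$ and for each $L_i'$ by $\binom{n}{\ell_i-1}\le n^{\ell_i-1}$. The paper's proof is a one-line version of exactly this argument.
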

 \begin{proof}
 Any sumset $X$ can written in only a way in the form $x+L_1'+\cdots+L_r'$ with $L_i'$ the translate of $L_i$ such that $\min L_i'=0$. The number of choices for $x,L_1',\dots,L_r'$ is at most $n\binom n{\ell_1-1}\cdots \binom n{\ell_r-1}<n^{1+(\ell_1-1)+\cdots +(\ell_r-1)}$.
 \end{proof}

\

We recall a Theorem, which is a consequence of the Jensen's inequality, and that will be used in the proof of Theorem \ref{thm:ubound_Kl..l-free_HGraph}.
\begin{thm}[Overlapping Theorem \cite{C-T}] \label{thm:overlapping-thm}
Let $(\Omega, \mathcal{A}, \mathbb{P})$ be a probability space and let $\{ E_j \}_{j=1}^k$ denote a family of events. Write
\[
\sigma_r := \sum_{1 \le j_1 < \cdots < j_r \le k} \Pr{ E_{j_1} \cap \cdots \cap E_{j_r}}, \quad (r \ge 1).
\]
Then we have
\[
\sigma_r \ge \binom{\sigma_1}{r}=\frac{\sigma_1(\sigma_1-1)\cdots (\sigma_1-(r-1))}{r!}.
\]
\end{thm}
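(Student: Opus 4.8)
The plan is to replace the whole family $\{E_j\}_{j=1}^k$ by the single integer‑valued random variable
$$N:=\sum_{j=1}^{k}\mathbf 1_{E_j},$$
which counts how many of the events occur, to rewrite both sides of the asserted inequality as functions of $\E[N]$, and then to apply Jensen's inequality to a suitable convex function. Concretely, I would first record the pointwise identity
$$\binom{N}{r}=\sum_{1\le j_1<\cdots<j_r\le k}\mathbf 1_{E_{j_1}\cap\cdots\cap E_{j_r}},$$
which holds with the usual convention $\binom{m}{r}=0$ for integers $m<r$: at a sample point where exactly $m$ of the events hold, the right‑hand side counts the $\binom{m}{r}$ many $r$‑element subsets of those $m$ events. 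Taking expectations gives $\sigma_r=\E\!\left[\binom{N}{r}\right]$ and, for $r=1$, $\sigma_1=\E[N]$, so the theorem is equivalent to the inequality $\E\!\left[\binom{N}{r}\right]\ge\binom{\E[N]}{r}$ for the nonnegative integer‑valued variable $N$.

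The one delicate point is that $x\mapsto\binom{x}{r}=\tfrac{1}{r!}\,x(x-1)\cdots(x-r+1)$ is convex on $[r-1,\infty)$ but not on $[0,r-1]$ once $r\ge 3$. I would get around this by working with the truncation $\varphi\colon[0,\infty)\to\Real$ defined by $\varphi(x)=0$ for $0\le x\le r-1$ and $\varphi(x)=\binom{x}{r}$ for $x\ge r-1$. Substituting $u=x-(r-1)$ shows $\binom{x}{r}=\tfrac{1}{r!}\,u(u+1)\cdots(u+r-1)$, a polynomial in $u$ with nonnegative coefficients, hence convex and nondecreasing on $u\ge 0$, with right derivative equal to $1/r$ at $u=0$; since the left branch of $\varphi$ is the constant $0$ (left derivative $0$ at the breakpoint), the two branches glue into a convex function on $[0,\infty)$. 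Moreover $\varphi$ agrees with $\binom{\cdot}{r}$ at every nonnegative integer (both vanish on $\{0,1,\dots,r-1\}$), so Jensen's inequality yields
$$\sigma_r=\E\!\left[\binom{N}{r}\right]=\E\bigl[\varphi(N)\bigr]\ge\varphi\bigl(\E[N]\bigr)=\varphi(\sigma_1).$$
When $\sigma_1\ge r-1$ we have $\varphi(\sigma_1)=\binom{\sigma_1}{r}$, which is exactly the claimed bound; this is the only range relevant to the use of the theorem in the proof of Theorem \ref{thm:ubound_Kl..l-free_HGraph}, where $\sigma_1\to\infty$ (outside that range one obtains the bound $\sigma_r\ge 0$, which is all one can expect there).

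The only genuine computation in this argument is the convexity check for $\varphi$ at the breakpoint $x=r-1$, i.e. verifying that the one‑sided derivatives there are $0$ from the left and $1/r>0$ from the right; this is the step I expect to need the most care, since it is what certifies that passing from the integer variable $N$ to the continuous convex surrogate $\varphi$ is legitimate. Everything else—the counting identity for $\binom{N}{r}$, linearity of expectation, and the final substitution—is routine bookkeeping.
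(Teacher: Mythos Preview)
The paper does not actually prove this theorem: it is quoted from \cite{C-T}, and the only hint given is the sentence preceding the statement, that it ``is a consequence of the Jensen's inequality.'' Your argument---introducing $N=\sum_j\mathbf 1_{E_j}$, using the counting identity $\binom{N}{r}=\sum_{j_1<\cdots<j_r}\mathbf 1_{E_{j_1}\cap\cdots\cap E_{j_r}}$, and then applying Jensen to the convex truncation $\varphi$---is exactly the argument being alluded to, and your convexity check at the breakpoint $x=r-1$ is correct.

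You are also right to flag the range issue. The inequality $\sigma_r\ge\binom{\sigma_1}{r}$ as literally stated can fail when $0<\sigma_1<r-1$: for example, with $r=3$ and three mutually exclusive events of probability $1/6$ one has $\sigma_1=1/2$, $\sigma_3=0$, but $\binom{1/2}{3}=1/16>0$. Your truncation $\varphi$ delivers the stated bound precisely on the range $\sigma_1\ge r-1$, and this is all that is needed in the paper's applications (Lemma~\ref{lem:over} and Theorem~\ref{thm:ubound_Kl..l-free_HGraph}), where $\sigma_1\to\infty$.
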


An inmediate corollary of Theorem \ref{thm:overlapping-thm} is the following lemma which will be used several times through this work.
\begin{lem}\label{lem:over}
Let $r \ge 2$ be an integer. If $A+B\subset X$ then
\begin{equation}\label{eq:over}\frac 1{|X|}\sum_{\{x_1,\dots,x_{r}\}\in \binom{B}{r}}|(A+ x_1)\cap \cdots \cap (A+x_{r})|\ge \binom{\frac{|A||B|}{|X|}}{r}.
\end{equation}
\end{lem}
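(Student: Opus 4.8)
The plan is to derive Lemma \ref{lem:over} directly from the Overlapping Theorem by choosing the right probability space and family of events. Work in the finite probability space $\Omega = X$ with the uniform measure, so $\Pr{S} = |S|/|X|$ for $S\subset X$. Enumerate $B = \{b_1,\dots,b_{|B|}\}$ and, for each $j$, define the event $E_j = (A+b_j)\cap X = A+b_j$ (the last equality because $A+B\subset X$ forces $A+b_j\subset X$ for every $b_j\in B$). Then $\Pr{E_j} = |A+b_j|/|X| = |A|/|X|$, since translation is a bijection, so $\sigma_1 = \sum_{j=1}^{|B|}\Pr{E_j} = |A||B|/|X|$.

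Next I would identify the higher $\sigma_r$ with the left-hand side of \eqref{eq:over}. For a subset $\{j_1 < \cdots < j_r\}$ of indices we have
\[
\Pr{E_{j_1}\cap\cdots\cap E_{j_r}} = \frac{|(A+b_{j_1})\cap\cdots\cap(A+b_{j_r})|}{|X|},
\]
so summing over all $r$-subsets of $\{1,\dots,|B|\}$ — equivalently over all $r$-element subsets $\{x_1,\dots,x_r\}\in\binom{B}{r}$ — gives exactly
\[
\sigma_r = \frac{1}{|X|}\sum_{\{x_1,\dots,x_r\}\in\binom{B}{r}} |(A+x_1)\cap\cdots\cap(A+x_r)|.
\]
Then the Overlapping Theorem yields $\sigma_r \ge \binom{\sigma_1}{r} = \binom{|A||B|/|X|}{r}$, which is precisely \eqref{eq:over}.

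One small technical point to address is the possibility that the elements of $B$ are not distinct modulo the ambient structure, or that two distinct $b_j$ give the same translate $A+b_j$; this does not cause a problem because the Overlapping Theorem allows an arbitrary family of events (repetitions included), and the indexing in \eqref{eq:over} is by subsets of $B$ itself, matching the index set $\{1,\dots,k\}$ with $k=|B|$. I expect no real obstacle here: the entire content is bookkeeping to match notation, and the analytic inequality is supplied wholesale by Theorem \ref{thm:overlapping-thm}. The only thing to be careful about is making sure the identification of $\sigma_1$ with $|A||B|/|X|$ and of $\sigma_r$ with the averaged sum of intersection sizes is stated cleanly, since those two identifications are the whole proof.
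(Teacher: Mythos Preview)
Your proof is correct and is exactly the argument the paper has in mind: the paper states that Lemma~\ref{lem:over} is ``an immediate corollary of Theorem~\ref{thm:overlapping-thm}'' without spelling out the details, and your choice of the uniform probability space on $X$ with events $E_j=A+b_j$ is precisely the intended specialization. The computations of $\sigma_1$ and $\sigma_r$ and the invocation of $A+B\subset X$ to ensure $A+b_j\subset X$ are all correct.
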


\

Behrend \cite{Behrend} proved the following result that will be used in the random constructions we make in the sequel.
\begin{thm}[Behrend] \label{Behrend} For $n$ large enough, any set  of $n$ consecutive integers contains a subset $B_n$ free of arithmetic progressions with size $|B_n|=n^{1-\omega(n)}$, for some decreasing function $\omega(n)\to 0$ when $n\to \infty.$
\end{thm}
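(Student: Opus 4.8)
The plan is to construct the progression-free set by a pigeonhole argument in a high-dimensional box, exactly as in Behrend's original approach. First I would reduce to the case of the interval $\{0,1,\dots,n-1\}$, since a set of $n$ consecutive integers is a translate of this one and being free of arithmetic progressions is translation-invariant. Fix an integer base $d$ and a dimension $k$ (to be optimized at the end), and represent each integer $x\in\{0,\dots,d^k-1\}$ by its vector of digits $(x_1,\dots,x_k)$ in base $d$, where $0\le x_i<d$. The key point is that if we only use digits in $\{0,\dots,\lfloor d/2\rfloor-1\}$, then addition of two such digit-vectors produces no carries, so the map $x\mapsto (x_1,\dots,x_k)$ is ``additively faithful'' on that restricted set: $x+y=2z$ in the integers forces $x_i+y_i=2z_i$ coordinatewise.

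Next I would bucket these restricted integers according to the value of $\sum_{i=1}^k x_i^2$, which ranges over $O(k d^2)$ possible values. By pigeonhole, some sphere $\sum x_i^2 = R$ contains at least $(\lceil d/2\rceil)^k / (k d^2)$ of them; call this set $B$. The sphere is strictly convex, so $B$ contains no three-term arithmetic progression: if $x,y,z\in B$ with $x+y=2z$ then, viewing them as points of $\Real^k$ on a common sphere, the midpoint $z=(x+y)/2$ can lie on the sphere only if $x=y$. (This uses the carry-free property to pass from an integer progression to a vector-space midpoint relation.) Hence $B$ is a progression-free subset of $\{0,\dots,d^k-1\}$ of size at least $c\,(d/2)^k/(k d^2)$.

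Finally I would optimize the parameters: given $n$, set $k\approx\sqrt{\log n}$ and choose $d$ so that $d^k\le n<(d+1)^k$, i.e. $d\approx n^{1/k}$, and rescale $B$ inside $\{0,\dots,n-1\}$ (taking the largest block $d^k\le n$ and noting a constant-factor loss is harmless). A routine computation shows $|B|\ge n^{1-\omega(n)}$ where $\omega(n)=O(1/\sqrt{\log n})\to 0$; in particular one may take $\omega(n)$ decreasing by shrinking it to a monotone majorant if needed. The main obstacle — really the only subtle step — is verifying the carry-free implication that turns an integer three-term progression inside $B$ into the Euclidean midpoint identity on the sphere; everything else is pigeonhole and parameter bookkeeping. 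Since this is a classical theorem I would simply cite \cite{Behrend} rather than reproduce the optimization in full.
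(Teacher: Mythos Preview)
Your sketch is correct and is precisely Behrend's original sphere-in-a-box argument; the paper itself does not prove this theorem but simply quotes it as a known result of Behrend \cite{Behrend}, adding only the remark that one may take $\omega(n)\asymp 1/\sqrt{\log n}$. Your final sentence, that one should just cite \cite{Behrend}, is exactly what the paper does.
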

Indeed it is possible to take $\omega(n)\asymp 1/\sqrt{\log n}.$

\section{Finite $\Ll$-free sets} \label{sec:finite-Lfree-sets}
\subsection{Proof of Theorem \ref{thm:main-ubound}}
As the first step in the proof we attain a weaker version of the upper bound of Theorem \ref{thm:main-ubound}.
\begin{lem}\label{lem:weak}
For $r\ge 2$ and $2 \le \ell_1 \le \cdots \le \ell_r$ we have
$$F\big (n,\mathcal L^{(r)}_{\ell_1,\dots,\ell_r}\big )\ll n^{1-1/(\ell_1\cdots \ell_{r-1})}.$$
\end{lem}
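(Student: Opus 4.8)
The plan is to prove the bound by induction on $r$, using Lemma~\ref{lem:TBD-1} to descend from $r$ to $r-1$ and the overlapping estimate of Lemma~\ref{lem:over} to turn control on intersections of translates back into a bound on $|A|$. Since $\mathcal{L}$-freeness is translation invariant, $F(m,\mathcal{L}^{(r-1)}_{\ell_2,\dots,\ell_r})$ also bounds the size of any $\mathcal{L}^{(r-1)}_{\ell_2,\dots,\ell_r}$-free set lying in an interval of $m$ consecutive integers. The base case $r=1$ is the trivial observation recalled after Definition~\ref{def}, namely that an $\mathcal{L}^{(1)}_{\ell_1}$-free set has at most $\ell_1-1$ elements, which matches the asserted bound under the empty-product convention $n^{1-1/(\text{empty})}=n^{0}$. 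So I would then assume $r\ge 2$ together with the inductive hypothesis $F(m,\mathcal{L}^{(r-1)}_{\ell_2,\dots,\ell_r})\ll m^{1-1/(\ell_2\cdots\ell_{r-1})}$.

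For the step, let $A\subseteq\{1,\dots,n\}$ be $\mathcal{L}^{(r)}_{\ell_1,\dots,\ell_r}$-free; we may assume $|A|$ exceeds any fixed constant, since otherwise there is nothing to prove. Put $B=\{1,\dots,n\}$ and $X=\{2,\dots,2n\}$, so that $A+B\subseteq X$ and $|X|=2n-1$. For any $\ell_1$ distinct $x_1,\dots,x_{\ell_1}\in B$, Lemma~\ref{lem:TBD-1} shows that $(A+x_1)\cap\cdots\cap(A+x_{\ell_1})$ is $\mathcal{L}^{(r-1)}_{\ell_2,\dots,\ell_r}$-free, and it is contained in the interval $X$; hence by the inductive hypothesis
$$|(A+x_1)\cap\cdots\cap(A+x_{\ell_1})|\le F(2n-1,\mathcal{L}^{(r-1)}_{\ell_2,\dots,\ell_r})\ll n^{1-1/(\ell_2\cdots\ell_{r-1})}.$$
Summing over the $\binom{n}{\ell_1}$ subsets $\{x_1,\dots,x_{\ell_1}\}\subseteq B$ and dividing by $|X|$ gives
$$\frac{1}{2n-1}\sum_{\{x_1,\dots,x_{\ell_1}\}\in\binom{B}{\ell_1}}|(A+x_1)\cap\cdots\cap(A+x_{\ell_1})|\ll n^{\ell_1-1}\,n^{1-1/(\ell_2\cdots\ell_{r-1})}=n^{\ell_1-1/(\ell_2\cdots\ell_{r-1})}.$$

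On the other hand, applying Lemma~\ref{lem:over} with its parameter $r$ taken to be $\ell_1$ (legitimate since $\ell_1\ge 2$) to the inclusion $A+B\subseteq X$ bounds the very same left-hand side from below by $\binom{|A|\,n/(2n-1)}{\ell_1}$, and since $|A|\,n/(2n-1)\ge|A|/2$ with $|A|$ large, this is $\gg|A|^{\ell_1}$. Comparing with the previous display yields $|A|^{\ell_1}\ll n^{\ell_1-1/(\ell_2\cdots\ell_{r-1})}$, that is, $|A|\ll n^{1-1/(\ell_1\ell_2\cdots\ell_{r-1})}$, which is exactly the claimed bound and closes the induction.

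I do not expect a genuine obstacle here: the argument is essentially mechanical once Lemmas~\ref{lem:TBD-1} and \ref{lem:over} are in hand. The only points needing a little care are keeping every translate $A+x_i$ and every intersection inside one interval of length $O(n)$ so that the inductive hypothesis (stated for intervals) applies, and disposing of the case of bounded $|A|$ so that $\binom{|A|\,n/(2n-1)}{\ell_1}\gg|A|^{\ell_1}$ is valid. The real work lies not here but afterwards, in upgrading this weak estimate to the sharp leading constant $(\ell_r-1)^{1/(\ell_1\cdots\ell_{r-1})}$ and the precise error term of Theorem~\ref{thm:main-ubound}.
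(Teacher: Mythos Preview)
Your proof is correct and follows essentially the same approach as the paper's: induction on $r$ with the trivial $r=1$ case as base, Lemma~\ref{lem:TBD-1} to pass from $\mathcal L^{(r)}$-freeness of $A$ to $\mathcal L^{(r-1)}$-freeness of the $\ell_1$-fold intersections of translates, and Lemma~\ref{lem:over} (with its parameter taken equal to $\ell_1$) applied to $A+B\subset X$ with $B=[n]$ and $X$ an interval of length about $2n$ to convert the upper bound on intersections into the desired bound on $|A|$. The only cosmetic differences are that the paper takes $X=[1,2n]$ rather than $\{2,\dots,2n\}$ and writes out the recurrence $F_r<(2n)^{1-1/\ell_1}F_{r-1}^{1/\ell_1}+2\ell_1$ explicitly before inducting; also, the intersection actually sits in an interval of length at most $n$ (the intersection of the shifted copies of $[1,n]$), so one may invoke $F_{r-1}=F(n,\mathcal L^{(r-1)}_{\ell_2,\dots,\ell_r})$ directly rather than $F(2n-1,\cdot)$, though this changes nothing at the level of $\ll$.
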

\begin{proof}
For short we write $F_r=F\big (n,\mathcal L^{(r)}_{\ell_1,\dots,\ell_r}\big )$ and $\ F_{r-1}=F\big (n,\mathcal L^{(r-1)}_{\ell_2,\dots,\ell_r}\big )$. Suppose that $A\subset [n]$ is a $\mathcal L^{(r)}_{\ell_1,\dots,\ell_r}$-free set of largest cardinality, so we have $|A|=F_r$.
Lemma \ref{lem:TBD-1} implies that
\begin{equation}\label{int}|(A+x_1)\cap \cdots \cap (A+x_{\ell_1})|\le F_{r-1},
\end{equation}
holds for any set of distinct positive integers $\{x_1,\dots, x_{\ell_1}\}$.

Now we take $B=[1,n]$ and $X=[1,2n]$ in Lemma \ref{lem:over} and use \eqref{int} to get
$$\frac 1{2n}\binom n{\ell_1} F_{r-1}\ge \binom{F_r/2}{\ell_1}\implies \frac{n^{\ell_1-1}}2F_{r-1}> (F_r/2-(\ell_1-1))^{\ell_1},$$
and then we have 
\begin{equation}\label{des}F_r< (2n)^{1-1/\ell_1}(F_{r-1})^{1/\ell_1}+2\ell_1.\end{equation}
This inequality allow us to prove Lemma \ref{lem:weak} using induction on $r$. 
First note that $F_1=F(n,\mathcal L_{\ell_2})=\ell_2-1$ for $n\ge \ell_2-1$ and inserting \eqref{des}
we have that the Lemma is true for $r=2$.

Assume that it is true for $r-1$. Inequality \ref{des} implies
$$F_r\ll n^{1-1/\ell_1}(F_{r-1})^{1/\ell_1}\ll n^{1-1/\ell_1}(n^{1-1/(\ell_2\cdots \ell_{r-1})})^{1/\ell_1}\ll n^{1-1/(\ell_1\cdots \ell_{r-1})}.$$
\end{proof}

Next we will prove a refined version of the inequality \eqref{des}.
\begin{lem} \label{lem:strong}
Let $r, \ell_1,\ldots,\ell_r$ be integers with $r\ge 2$ and $2\le \ell_1\le \cdots \le \ell_r$. We have the following inequality:
$$F\big(n,\mathcal L^{(r)}_{\ell_1,\dots,\ell_r}\big)< n^{1-\frac 1{\ell_1}}\big (F\big (n,\mathcal L^{(r-1)}_{\ell_2,\dots,\ell_r}\big)\big )^{\frac 1{\ell_1}}+ O\left ( n^{1/2-1/(2\ell_1\cdots \ell_{r-1})}  \right ) .$$
\end{lem}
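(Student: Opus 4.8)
The plan is to refine the proof of inequality \eqref{des} in Lemma \ref{lem:weak} by being more careful at exactly two places: the loss incurred when passing from the interval $[1,n]$ to the doubled interval $[1,2n]$, and the loss when estimating $(n/\ell_1-\text{lower order})$ by $n^{\ell_1}$ in the binomial coefficients. Write $F_r=F\big(n,\mathcal L^{(r)}_{\ell_1,\dots,\ell_r}\big)$ and $F_{r-1}=F\big(n,\mathcal L^{(r-1)}_{\ell_2,\dots,\ell_r}\big)$, and let $A\subset[1,n]$ be a $\mathcal L^{(r)}_{\ell_1,\dots,\ell_r}$-free set with $|A|=F_r$. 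First I would invoke Lemma \ref{lem:TBD-1}, which gives $|(A+x_1)\cap\cdots\cap(A+x_{\ell_1})|\le F_{r-1}$ for every collection of $\ell_1$ distinct integers $x_1,\dots,x_{\ell_1}$. This is the input that drives everything.

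Next I would apply Lemma \ref{lem:over} (the Overlapping Theorem corollary) with $r$ there equal to $\ell_1$, and with the choice $B=[1,n]$, $X=[1,2n]$, so that $A+B\subset X$. The left side of \eqref{eq:over} is then bounded above by $\tfrac{1}{2n}\binom{n}{\ell_1}F_{r-1}$, giving
$$\binom{F_r^2/(2n)}{\ell_1}\le \frac{1}{2n}\binom{n}{\ell_1}F_{r-1}.$$
The key to the \emph{refined} version is to extract $F_r$ from the left binomial without wastefully replacing $F_r^2/(2n)-O(1)$ by a crude power. Using $\binom{t}{\ell_1}\ge \frac{1}{\ell_1!}(t-\ell_1+1)^{\ell_1}$ on the left and $\binom{n}{\ell_1}\le \frac{n^{\ell_1}}{\ell_1!}$ on the right, I get $(F_r^2/(2n)-\ell_1+1)^{\ell_1}\le \tfrac12 n^{\ell_1-1}F_{r-1}$, hence $F_r^2/(2n)\le (2n)^{-1/\ell_1}\big(n\,F_{r-1}\big)^{1/\ell_1}\cdot n^{1-1/\ell_1}\cdot 2^{-1/\ell_1}\cdot\ldots + O(1)$; rearranging and taking the $(1/\ell_1)$-th root carefully, the leading term is $F_r\le n^{1-1/\ell_1}F_{r-1}^{1/\ell_1}$ and the next-order correction, which comes from the $+\ell_1-1$ additive shift inside the $\ell_1$-th power, contributes a term of size $O\big((n\cdot F_r^2/n)^{?}\big)$ — concretely, a binomial expansion of $(u-c)^{\ell_1}=u^{\ell_1}(1-c/u)^{\ell_1}$ with $u=F_r^2/(2n)$ and $c=\ell_1-1$ shows the relative error is $O(1/u)=O(n/F_r^2)$, and since by Lemma \ref{lem:weak} we already know $F_r\asymp n^{1-1/(\ell_1\cdots\ell_{r-1})}$ and similarly $F_{r-1}\asymp n^{1-1/(\ell_2\cdots\ell_{r-1})}$, this relative error translates into an absolute error term of the claimed order $O\!\left(n^{1/2-1/(2\ell_1\cdots\ell_{r-1})}\right)$.

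The cleanest way to organize the bookkeeping is: set $M:=n^{1-1/\ell_1}F_{r-1}^{1/\ell_1}$ (the target main term, up to the factor $2^{1-1/\ell_1}$ one must also track — actually the doubling factors $(2n)^{-1/\ell_1}$ from $X$ and $(2n)$ from $B$ in $A+B\subset X$ conspire, and one checks that the $2$'s combine to give exactly the stated $n^{1-1/\ell_1}$ with no stray constant, because $\tfrac{1}{2n}\cdot n^{\ell_1-1}=\tfrac12 n^{\ell_1-2}$ and taking $\ell_1$-th roots against $F_r^2/(2n)$ on the left the two halves cancel). Then from $(F_r^2/(2n)-\ell_1+1)\le n^{1-1/\ell_1}\big(\tfrac12 F_{r-1}\big)^{1/\ell_1}\cdot(2n)^{-1+1/\ell_1}\cdot(2n) $ — I would just write $F_r^2/(2n)\le (M'/2n)+\ell_1$ with $M'$ the right-hand bound, deduce $F_r\le \sqrt{M'+2n\ell_1}\le \sqrt{M'}+ \sqrt{2n\ell_1}$, and observe $\sqrt{M'}$ is precisely $n^{1-1/\ell_1}F_{r-1}^{1/\ell_1}$ while $\sqrt{2n\ell_1}=O(n^{1/2})$, which is absorbed into $O(n^{1/2-1/(2\ell_1\cdots\ell_{r-1})})$ once one also accounts — and this is the one genuine subtlety — for the fact that the $\binom{n}{\ell_1}$ upper bound should be $\binom{n}{\ell_1}=\frac{n^{\ell_1}}{\ell_1!}(1-O(1/n))$, whose lower-order correction, after taking roots and multiplying back by $n$, also lands at order $n^{1/2-1/(2\ell_1\cdots\ell_{r-1})}$ given the known size of $F_{r-1}$.

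The main obstacle I anticipate is purely the error-term accounting: one must verify that every lower-order term generated — from the additive shift $\ell_1-1$ inside the $\ell_1$-th power on the left, from the $(1-O(1/n))$ factor in $\binom{n}{\ell_1}$ on the right, and from the $\sqrt{a+b}\le\sqrt a+\sqrt b$ step — is $O\!\left(n^{1/2-1/(2\ell_1\cdots\ell_{r-1})}\right)$, and not larger. This requires feeding in the already-proven crude asymptotics $F_r\asymp n^{1-1/(\ell_1\cdots\ell_{r-1})}$ and $F_{r-1}\asymp n^{1-1/(\ell_2\cdots\ell_{r-1})}$ from Lemma \ref{lem:weak} to convert \emph{relative} errors of size $O(n/F_r^2)=O(n^{-1+2/(\ell_1\cdots\ell_{r-1})})$ into absolute errors by multiplying by the main term $M\asymp n^{1-1/(\ell_1\cdots\ell_{r-1})}$, giving $O(n^{2/(\ell_1\cdots\ell_{r-1})})$ from that source and $O(n^{1/2})$ from the square-root-splitting source; the worst of these is what appears in the statement. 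No new idea beyond Lemmas \ref{lem:TBD-1}, \ref{lem:over}, and \ref{lem:weak} is needed — it is a sharpening of the same argument.
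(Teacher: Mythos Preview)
Your proposal has a genuine gap: with the choice $B=[1,n]$, $X=[1,2n]$ you cannot reach the constant $1$ in front of the main term $n^{1-1/\ell_1}F_{r-1}^{1/\ell_1}$, only $2^{1-1/\ell_1}$. Indeed, Lemma~\ref{lem:over} gives $|A||B|/|X|=F_r\cdot n/(2n)=F_r/2$ (not $F_r^2/(2n)$ as you wrote), so the inequality is $\binom{F_r/2}{\ell_1}\le \tfrac1{2n}\binom{n}{\ell_1}F_{r-1}$, and extracting $F_r$ yields exactly $F_r\le (2n)^{1-1/\ell_1}F_{r-1}^{1/\ell_1}+2\ell_1$, which is just \eqref{des} again. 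The assertion that ``the $2$'s combine to give exactly the stated $n^{1-1/\ell_1}$ with no stray constant'' is false; the factor $2^{1-1/\ell_1}>1$ is intrinsic to this choice of $B$. The square-root manipulation you describe (``$F_r\le\sqrt{M'+2n\ell_1}$'') has no place here --- there is no quadratic in $F_r$ --- and your final error accounting is also off: $O(n^{1/2})$ is \emph{larger} than $O(n^{1/2-1/(2\ell_1\cdots\ell_{r-1})})$, so it cannot be absorbed into it.

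The idea the paper actually uses is different: take $B=[0,m]$ with $m$ much smaller than $n$, so that $X=[1,n+m]$ and $|A||B|/|X|=(m+1)F_r/(n+m)$. Running the same manipulation gives
\[
F_r\le (n+m)^{1-1/\ell_1}F_{r-1}^{1/\ell_1}+\frac{(\ell_1-1)(n+m)}{m+1}
\le n^{1-1/\ell_1}F_{r-1}^{1/\ell_1}+m\Big(\frac{F_{r-1}}{n}\Big)^{1/\ell_1}+\frac{(\ell_1-1)n}{m}+O(1),
\]
and now one \emph{optimizes} over $m$: balancing the two error terms forces $m\asymp \sqrt{n}\,(F_{r-1}/n)^{-1/(2\ell_1)}$, and feeding in $F_{r-1}\ll n^{1-1/(\ell_2\cdots\ell_{r-1})}$ from Lemma~\ref{lem:weak} gives each of them size $O(n^{1/2-1/(2\ell_1\cdots\ell_{r-1})})$. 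The refinement over Lemma~\ref{lem:weak} is not bookkeeping on the binomials --- it is the introduction of the free parameter $m$.
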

\begin{proof}
 The first part of the proof is similar to the proof of Lemma \ref{lem:weak} but now we take  $B=[0,m]$ and $X=[1,n+m]$ in Lemma \ref{lem:over} where $m$  will be choosen later. The inequality \eqref{int} and Lemma \ref{lem:over} imply
\begin{eqnarray*}\frac{F_{r-1}}{n+m}\binom{m+1}{\ell_1}&\ge &\binom{\frac{(m+1)F_r}{n+m}}{\ell_1}\\\implies \frac {F_{r-1}}{n+m}(m+1)^{\ell_1}&\ge &\left (\frac{(m+1)F_r}{n+m}-(\ell_1-1)\right )^{\ell_1} \\ \implies \frac {F_{r-1}}{n+m}&\ge &\left (\frac{F_r}{n+m}-\frac{\ell_1-1}{m+1}\right )^{\ell_1}
\\ \implies  \frac{F_r}{n+m}&\le &\left (\frac{F_{r-1}}{n+m}\right )^{1/\ell_1}+\frac{\ell_1-1}{m+1}.\end{eqnarray*}
Using that $(n+m)^{1-1/\ell_1}=n^{1-1/\ell_1}(1+m/n)^{1-1/\ell_1}<n^{1-1/\ell_1}(1+m/n)$ we get
\begin{align*}
F_r&\le  (n+m)^{1-1/\ell_1}{F_{r-1}}^{1/\ell_1}+\frac{(\ell_1-1)(n+m)}{m+1}\\
&\le n^{1-1/\ell_1}{F_{r-1}}^{1/\ell_1}\left (1+\frac m{n}\right )+\frac{(\ell_1-1)(n+m)}{m+1}\\
&\le n^{1-1/\ell_1}{F_{r-1}}^{1/\ell_1}+ m\left (\frac{F_{r-1}}n\right )^{1/\ell_1}+\frac{(\ell_1-1)n}{m+1} + \ell_1 -1.
\end{align*}

To make as sharp as possible this last estimate we need that the second and the third summands have the same order of magnitude.
Hence by taking
$m=\left \lfloor \sqrt{n\ell_1}/(F_{r-1}/n)^{1/(2\ell_1)}\right \rfloor$
and using also Lemma \ref{lem:weak} we have
\begin{align}
F_r&\le n\left (F_{r-1}/n\right )^{1/\ell_1}+ 2\frac{\ell_1-1}{\sqrt \ell_1}\sqrt n\left (F_{r-1}/n\right )^{1/(2\ell_1)}+\ell_1-1 \label{ineq:1-in-lem:strong} \\
&< n^{1-1/\ell_1}F_{r-1}^{1/\ell_1}+O\big (n^{1/2-1/(2\ell_1\cdots \ell_{r-1})}\big ) \notag,
\end{align}
as claimed.
\end{proof}
To finish the proof of Theorem \ref{thm:main-ubound} we proceed by induction on $r$.
For $r=2$, let $F_2=F(n,\mathcal L_{\ell_1,\ell_2})$  and $F_1=F(n,\mathcal L_{\ell_2})$.
Observe that $F_1=\ell_2-1$ for $n\ge \ell_2-1$. Inequality \eqref{ineq:1-in-lem:strong} implies
$$
F_2 < (\ell_2-1 )^{1/\ell_1}n^{1-1/\ell_1}+ O\left ( n^{1/2-1/(2\ell_1)}  \right ).
$$
Assume that Theorem \ref{thm:main-ubound} is true for $r-1$ and take any $\ell_1,\dots, \ell_r$ with $2\le \ell_1\le\cdots \le \ell_r$. Lemma \ref{lem:strong} and the induction hypothesis imply
\begin{eqnarray*}F_r&<& n^{1-1/\ell_1}\left (F_{r-1}\right )^{1/\ell_1}+  O\left ( n^{1/2-1/(2\ell_1\cdots \ell_{r-1})}  \right ) \\
&<&n^{1-1/\ell_1}\left (   (\ell_r-1)^{  \frac 1{\ell_2\cdots \ell_{r-1}}  }n^{1-\frac{1}{\ell_2\cdots \ell_{r-1}}}+O\left (n^{\frac 12+\frac 1{2\ell_{r-1}}-\frac 1{\ell_{2}\cdots \ell_{r-1}}}\right )      \right )^{1/\ell_1}\\&+& O\left (n^{\frac 12-\frac 1{2\ell_1\cdots \ell_{r-1}}}  \right )\\
&<&(\ell_r-1)^{\frac 1{\ell_1\cdots \ell_{r-1}}}
n^{1-\frac 1{\ell_1\cdots \ell_{r-1}}}
\left ( 1+ O\left (    n^{   -\frac 12+\frac 1{2\ell_{r-1}}   }  \right )    \right )^{1/\ell_1}+
O\left (n^{\frac 12-\frac 1{2\ell_1\cdots \ell_{r-1}}}\right )\\
&<&(\ell_r-1)^{\frac 1{\ell_1\cdots \ell_{r-1}}}
n^{1-\frac 1{\ell_1\cdots \ell_{r-1}}}+O\left (n^{\frac 12+\frac 1{2\ell_{r-1}}-\frac 1{\ell_1\cdots \ell_{r-1}}}    \right )+O\left (n^{\frac 12-\frac 1{2\ell_1\cdots \ell_{r-1}}}\right )
 \end{eqnarray*}
For $r=2$ the second and third summands are the same. For $r>2$ note that 
$
 \ell_1\cdots\ell_{r-2} -2 > -1,
$ and dividing this inequality by $2\ell_1\cdots\ell_{r-1}$ we have that the exponent in the third summand is smaller than the exponent in the second summand. Hence we have
$$
F_r < (\ell_r-1)^{\frac 1{\ell_1\cdots \ell_{r-1}}}
n^{1-\frac 1{\ell_1\cdots \ell_{r-1}}}+O\left (n^{\frac 12+\frac 1{2\ell_{r-1}}-\frac 1{\ell_1\cdots \ell_{r-1}}}    \right ),
$$
as claimed.
 \subsection{Proof of Theorem \ref{thm:main-lbound}}
The lower bound of Theorem \ref{thm:main-lbound} comes from a probabilistic construction. Our proof is a generalization of the argument that Gunderson and Rodl \cite{GR} used for the particular case $\Ll^{(r)}_{2,\dots,2}$.

Let $B_n$ denote the set given by Theorem \ref{Behrend} of Berend, with the following properties: $B_n\subset [n]$, $B$ is free of arithmetic progressions and its size is  $|B_n|=n^{1-\omega(n)}.$ with $\omega(n)=o(1).$ 

Next we randomly construct a set $S$ in $[n]$ with the following probability law:
$$\mathbb P(\nu\in S)=\begin{cases}p \text{ if } \nu\in B_n,\\ 0 \text{ otherwise},\end{cases}$$
where all the events $\{\nu\in S\}_{\nu \ge 1}$ are mutually independent. For the value of $p$ we choose
$$
p=p(n)=\frac 12n^{\frac{r-(\ell_1+\cdots+\ell_r)-\omega(n)}{\ell_1\cdots \ell_r-1}}.
$$
We will say that $X$ is an \emph{obstruction} (for $S$) when $X \subset S$ is a sumset of the class $\Ll_{\ell_1,\ldots,\ell_r} ^{(r)}.$ Our aim is to destroy all the obstructions for $S$. To this end we will remove from $S$ the greatest element  of every obstruction. Let $S^{\mathrm{bad}}$  denote the collection of all these greatest elements:  
\begin{equation} \label{S_b}
S^{\mathrm{bad}} := \bigcup_{ \substack{X \subset S \\ X \in \Ll_{\ell_1,\ldots,\ell_r}^{(r)}}} \{ \max(X)\}.
\end{equation}
If the number of obstructions is low enough, then we could remove from $S$ all the elements in the collection $S^{\mathrm{bad}}$ and still retain a sufficiently dense set which would be free of obstructions. With this in mind we claim that the obstructions for $S$ are few in our construction.
\begin{lem} \label{lem:atleast1/4}
For all $n$ sufficiently large, with probability at least $1/4$ the random sets constructed in this way satisfy
$$ |S| \ge \frac{|B_n|p}{2} \quad \text{ and }  \quad |S^{\mathrm{bad}}| \le \frac{|B_n|p}{4}.$$
\end{lem}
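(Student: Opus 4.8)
The plan is to prove the two estimates separately, using a first-moment (expectation plus concentration) argument for $|S|$ and a pure first-moment argument (Markov's inequality) for $|S^{\mathrm{bad}}|$.

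\textbf{Lower bound on $|S|$.} First I would compute $\E(|S|)=|B_n|p$, since each of the $|B_n|$ elements of $B_n$ is included independently with probability $p$. Because $|S|$ is a sum of independent indicator variables, I would apply a standard concentration inequality (Chebyshev suffices, since $\operatorname{Var}(|S|)\le |B_n|p$). With our choice of $p$, the mean $|B_n|p$ tends to infinity as $n\to\infty$ (the exponent of $n$ is positive once $n$ is large), so Chebyshev gives
$$\Pr{ |S| < \tfrac{|B_n|p}{2} } \le \frac{\operatorname{Var}(|S|)}{(|B_n|p/2)^2} \le \frac{4}{|B_n|p} = o(1).$$
In particular this probability is less than $1/4$ for $n$ large enough.

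\textbf{Upper bound on $|S^{\mathrm{bad}}|$.} Here I would bound $\E(|S^{\mathrm{bad}}|)$ from above by the expected number of obstructions, since $|S^{\mathrm{bad}}|$ is at most the number of sumsets $X\subset S$ with $X\in\Ll^{(r)}_{\ell_1,\dots,\ell_r}$. Each such $X$ has $|X|\le \ell_1\cdots\ell_r$ elements; but more care is needed: if $X=L_1+\cdots+L_r$ is degenerated (Definition \ref{degenerated-sumset}) then by Lemma \ref{lem:degen-sumset-contains-AP} it contains a $3$-term arithmetic progression, and since $S\subset B_n$ is free of arithmetic progressions, $X\subset S$ forces $X$ to be non-degenerated, so $|X|=\ell_1\cdots\ell_r$. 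Thus each obstruction inside $S$ contains exactly $\ell_1\cdots\ell_r$ points of $B_n$, each present with probability $p$, so the probability that a fixed candidate sumset lies in $S$ is $p^{\ell_1\cdots\ell_r}$. By Lemma \ref{countsumset} the number of sumsets $X\subset[n]$ of this form is at most $n^{\ell_1+\cdots+\ell_r-r+1}$. Hence
$$\E\big(|S^{\mathrm{bad}}|\big) \le n^{\ell_1+\cdots+\ell_r-r+1}\, p^{\ell_1\cdots\ell_r}.$$
Now I would plug in the chosen value of $p$ and verify by a direct exponent computation that this is $o(|B_n|p)$; indeed the exponent of $n$ in $n^{\ell_1+\cdots+\ell_r-r+1}p^{\ell_1\cdots\ell_r}$ works out (after using $|B_n|=n^{1-\omega(n)}$) to be strictly smaller than that of $|B_n|p/8$ by a fixed positive constant, because the $+1$ in the exponent of Lemma \ref{countsumset} is more than compensated. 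Therefore $\E(|S^{\mathrm{bad}}|) \le \frac{|B_n|p}{16}$ for $n$ large, and Markov's inequality gives $\Pr{ |S^{\mathrm{bad}}| > |B_n|p/4 } \le 1/4$.

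\textbf{Combining.} Each of the two bad events — $\{|S|<|B_n|p/2\}$ and $\{|S^{\mathrm{bad}}|>|B_n|p/4\}$ — has probability at most (strictly less than, for the first, eventually) $1/4$ once $n$ is large enough, so by the union bound the probability that at least one fails is at most $1/2 < 3/4$, hence with probability at least $1/4$ both desired inequalities hold simultaneously. The main obstacle is the bookkeeping in the second part: one must be sure that restricting to $S\subset B_n$ (Behrend set, no $3$-APs) forces non-degeneracy so that the correct power $p^{\ell_1\cdots\ell_r}$ — and not a smaller power coming from a degenerated, hence smaller, sumset — appears, and then check that the polynomial count of Lemma \ref{countsumset} together with this power of $p$ is genuinely of smaller order than $|B_n|p$; this is exactly where the precise choice of $p(n)$, including the $-\omega(n)$ correction in the exponent, is used.
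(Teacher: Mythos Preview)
Your overall strategy matches the paper's exactly: Chebyshev for $|S|$, a first-moment bound plus Markov for $|S^{\mathrm{bad}}|$, and a union bound at the end. The use of the Behrend set to force non-degeneracy of obstructions is also precisely what the paper does.

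There is, however, a concrete error in your second step. You assert that the exponent of $n$ in $n^{\ell_1+\cdots+\ell_r-r+1}p^{\ell_1\cdots\ell_r}$ is \emph{strictly smaller} than the exponent of $n$ in $|B_n|p$, ``by a fixed positive constant''. This is false: the two exponents are \emph{equal}. Writing $\ell_1\cdots\ell_r=(\ell_1\cdots\ell_r-1)+1$ and plugging in $p=\tfrac12 n^{(r-(\ell_1+\cdots+\ell_r)-\omega(n))/(\ell_1\cdots\ell_r-1)}$ and $|B_n|=n^{1-\omega(n)}$, one finds exactly
\[
n^{\ell_1+\cdots+\ell_r-r+1}\,p^{\ell_1\cdots\ell_r}
=\frac{|B_n|\,p}{2^{\ell_1\cdots\ell_r-1}},
\]
which is the computation the paper carries out. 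So $\E(|S^{\mathrm{bad}}|)$ is \emph{not} $o(|B_n|p)$; the entire saving comes from the constant $2^{-(\ell_1\cdots\ell_r-1)}$, which in turn comes from the factor $\tfrac12$ built into the choice of $p$. Consequently Markov only gives
\[
\Pr{|S^{\mathrm{bad}}|>\tfrac{|B_n|p}{4}}\le \frac{|B_n|p/2^{\ell_1\cdots\ell_r-1}}{|B_n|p/4}\le \frac12
\]
(using $\ell_1\cdots\ell_r\ge 4$), not the $\le 1/4$ you claim; for $r=2$, $\ell_1=\ell_2=2$ your bound $\E(|S^{\mathrm{bad}}|)\le |B_n|p/16$ is simply not available. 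Fortunately the union bound then reads $1-\big(\tfrac14+\tfrac12\big)=\tfrac14$, which is exactly the conclusion of the lemma, so the argument survives once the arithmetic is corrected.
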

The lemma implies that there exist a set $S \subset [n]$ such that
$$
|S \setminus S^{\mathrm{bad}}| > \frac{|B_n|p}{4} =n^{1-\frac{\ell_1+\cdots+\ell_r-r}{\ell_1\cdots \ell_r-1}-o(1)}.
$$
Note that the set $A=S \setminus S^{\mathrm{bad}}$ satisfies the conditions of Theorem \ref{thm:main-lbound}. Indeed by removing from $S$ all the elements in $S^{\mathrm{bad}}$ we have that $A$ is $\Ll_{\ell_1,\ldots,\ell_r}^{(r)}$-free because all the sumsets of the class $\Ll_{\ell_1,\ldots,\ell_r}^{(r)}$ that were contained in $S$ have been destroyed.

Thus to complete the proof of Theorem \ref{thm:main-lbound} all we need is to prove
 Lemma \ref{lem:atleast1/4}.
\begin{proof}[Proof of Lemma \ref{lem:atleast1/4}]
On the one hand since $S\subset B$, then $S$ is free of arithmetic progressions and Lemma \ref{lem:degen-sumset-contains-AP} implies that all the obstructions for $S$ are non degenerated (see definition \ref{degenerated-sumset}). Hence all the possible sums within an obstruction $X$ are distinct and so the probability of any obstruction $X$ to occur in the construction is
\begin{equation}\label{pp}
\mathbb P(X\subset S \colon X \in \Ll_{\ell_1,\ldots\ell_r}^{(r)})=p^{\ell_1\cdots \ell_r}.
\end{equation}
Consider the random variable $N(S)=\#\{X\subset S \colon X \in \Ll_{\ell_1,\ldots\ell_r}^{(r)}\}$ that counts the number of obstructions. As two different obstructions may have the same maximum then $N(S)$ is greater or equal than the cardinality of $S^{\mathrm{bad}}$.
Hence using Lemma \ref{countsumset} and \eqref{pp} we can estimate the expected cardinal of $S^{\mathrm{bad}}$ as follows: 
\begin{eqnarray*}
\Esp{|S^{\mathrm{bad}}|} &\le & \mathbb E(N(S))=\#\{X\subset [n]:\ X \in \Ll_{\ell_1,\ldots\ell_r}^{(r)}\} \; \mathbb P(X\subset S)\\ &\le & n^{\ell_1+\cdots +\ell_r-r+1}p^{\ell_1\cdots \ell_r}\\
&=& 2^{-\ell_1 \cdots \ell_r} n^{\ell_1+\cdots +\ell_r-r+1}n^{\frac{r-(\ell_1+\cdots+\ell_r)-\omega(n)}{\ell_1\cdots \ell_r-1}(\ell_1 \cdots \ell_r-1+1)}\\
&=&2^{-\ell_1 \cdots \ell_r}n^{1-\omega(n)} n^{\frac{r-(\ell_1+\cdots+\ell_r)-\omega(n)}{\ell_1\cdots \ell_r-1}} =|B_n|p/2^{\ell_1 \cdots \ell_r-1}.
\end{eqnarray*}
Using  the fact that $2 \le \ell_i \; (2\le i \le r)$, this last estimate of $\E(|S^{\mathrm{bad}}|)$, and Markov inequality we have
\begin{align}\label{Markov}
\Pr{|S^{\mathrm{bad}}|>\frac{|B_n|p}4}	& \le \Pr{|S^{\mathrm{bad}}|>\frac{|B_n|p}{2^{\ell_1 \cdots \ell_r-2}}}\\
										&\le \Pr{|S^{\mathrm{bad}}|>2 \; \E(|S^{\mathrm{bad}}|)} \le 1/2.\nonumber
\end{align}
On the other hand the size of $S$ equals $\sum_{\nu \in B_n} 1_{\nu \in S}$, i.e: the sum of $|B_n|$ independent random indicator variables all having the same expectation $p$ and variance $p(1-p)$. This implies $\mathbb E(|S|)= |B_n|p$ and  $\mathrm{Var}(|S|)=|B_n|p(1-p).$ We can now use Chebychev inequality to write
\begin{align}\label{Che}
\Pr{|S|<\frac{|B_n|p}2 }&=\Pr{|S|<\frac{\E(|S|)}2} <\Pr{(|S-\E(|S|)|>\frac{\E(|S|)}2} \notag\\
&<\frac{4\text{Var}(|S|)}{(\E(|S|))^2}=\frac{4|B_n|p(1-p)}{(|B_n|p)^2}<\frac{4}{|B_n|p}<\frac 14,
\end{align} except for finitely many $n$, since $ |B_n|p \to \infty.$
Combining \eqref{Markov} and \eqref{Che} we obtain
$$ \Pr{|S|\ge |B_n|p/2 \text{ and } |S^{\mathrm{bad}}|\le |B_n|p/4} \ge 1-\left (1/2+1/4\right )\ge 1/4, $$
as claimed.
\end{proof}
\subsection{Proof of Theorem \ref{lbound:Ex(Z_{(p-1)}^3,222)}}
Theorem \ref{lbound:Ex(Z_{(p-1)}^3,222)} is an immediate corollary of the following result.
\begin{prop}\label{H3}Let $p$ be an odd prime and $\theta$ be a generator of $\F_p^*$. The set $$A=\{(x_1,x_2,x_3):\ \theta^{x_1}+\theta^{x_2}+\theta^{x_3}=1,\ x_1,x_2,x_3\ne 0\}\subset \Z_{p-1}^3$$ does not contain subsets of the form $L_1+L_2+L_3$ with $|L_1|=|L_2|=|L_3|=2$ and has $(p-3)^2$ elements.
\end{prop}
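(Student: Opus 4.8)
The plan is to verify the two assertions of Proposition \ref{H3} separately: first the cardinality count $|A|=(p-3)^2$, then the $\Ll^{(3)}_{2,2,2}$-freeness. For the cardinality, I would observe that a point $(x_1,x_2,x_3)\in\Z_{p-1}^3$ with all coordinates nonzero corresponds, via $u_i=\theta^{x_i}$, to a triple $(u_1,u_2,u_3)\in(\F_p^*\setminus\{1\})^3$ (the condition $x_i\neq 0$ means $u_i\neq 1$, and $u_i\neq 0$ automatically) satisfying $u_1+u_2+u_3=1$. Since $\theta$ is a generator of $\F_p^*$, the map $x_i\mapsto\theta^{x_i}$ is a bijection $\Z_{p-1}\to\F_p^*$, so $|A|$ equals the number of solutions of $u_1+u_2+u_3=1$ with each $u_i\in\F_p\setminus\{0,1\}$. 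For any choice of $u_1,u_2$ the value $u_3=1-u_1-u_2$ is determined, so I count pairs $(u_1,u_2)$ with $u_1,u_2\notin\{0,1\}$ and $1-u_1-u_2\notin\{0,1\}$, i.e.\ $u_1+u_2\neq 1$ and $u_1+u_2\neq 0$. There are $(p-2)^2$ pairs with $u_1,u_2\notin\{0,1\}$; among these I subtract those with $u_1+u_2=0$ (i.e.\ $u_2=-u_1$, which gives $p-2$ pairs, all automatically avoiding $0,1$ unless $p$ is small) and those with $u_1+u_2=1$ (i.e.\ $u_2=1-u_1$, again about $p-2$ pairs, but here $u_2\notin\{0,1\}\iff u_1\notin\{0,1\}$); a short inclusion–exclusion, checking that the two excluded families are disjoint for $p$ odd (their intersection would need $2u_1=1$ and $u_1=-u_1$, impossible), yields $(p-2)^2-2(p-2)=(p-2)(p-4)$... so I will need to recheck the exact bookkeeping of which $u_1$ are excluded in each family; the honest computation should land on $(p-3)^2$, and I would carry it out carefully with the $u_i\neq 1$ constraints tracked explicitly.

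For the freeness, suppose for contradiction that $L_1+L_2+L_3\subset A$ with $L_i=\{a_i,b_i\}$, $a_i\neq b_i$, in $\Z_{p-1}$. Writing $\alpha_i=\theta^{a_i}$, $\beta_i=\theta^{b_i}$, these are elements of $\F_p^*$ with $\alpha_i\neq\beta_i$. The eight sums $\lambda_1+\lambda_2+\lambda_3$ ($\lambda_i\in L_i$) all lie in $A$, which after exponentiating translates into: for every choice of $\gamma_i\in\{\alpha_i,\beta_i\}$ we have $\gamma_1\gamma_2\gamma_3\cdot(\text{no—})$; more precisely the exponents add, so $\theta^{\lambda_1+\lambda_2+\lambda_3}=\theta^{\lambda_1}\theta^{\lambda_2}\theta^{\lambda_3}$, and the defining relation of $A$ demands $\theta^{\lambda_1}+\theta^{\lambda_2}+\theta^{\lambda_3}=1$ for all eight choices, i.e.
\[
\gamma_1+\gamma_2+\gamma_3=1\qquad\text{for all }(\gamma_1,\gamma_2,\gamma_3)\in\{\alpha_1,\beta_1\}\times\{\alpha_2,\beta_2\}\times\{\alpha_3,\beta_3\}.
\]
Subtracting the equation for $(\alpha_1,\gamma_2,\gamma_3)$ from that for $(\beta_1,\gamma_2,\gamma_3)$ gives $\alpha_1-\beta_1=0$, contradicting $\alpha_1\neq\beta_1$. (The same subtraction argument is exactly Lemma \ref{lem:TBD-1} in disguise, but here it is immediate.) Hence no such $L_1+L_2+L_3$ can be contained in $A$, so $A$ is $\Ll^{(3)}_{2,2,2}$-free.

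The main obstacle is purely the cardinality count: one must handle the exclusions $u_i\neq 1$ (equivalently $x_i\neq 0$) with care, since these interact with the two linear conditions $u_1+u_2\in\{0,1\}$, and one should double-check the small-prime edge cases (e.g.\ $p=3$, where $(p-3)^2=0$ and indeed there is no valid triple, and $p=2$, where the statement is vacuous since $\F_p^*$ is trivial and $\Z_{p-1}^3$ is trivial). The freeness half is essentially a one-line linear-algebra argument over $\F_p$ once the translation to the multiplicative picture is set up, and requires only that $\theta$ is a generator so that distinct exponents give distinct field elements. Finally, Theorem \ref{lbound:Ex(Z_{(p-1)}^3,222)} follows immediately since $A$ exhibits a $\Ll^{(3)}_{2,2,2}$-free set in $\Z_{p-1}^3$ of size $(p-3)^2$.
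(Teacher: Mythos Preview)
Your freeness argument contains a fundamental error: you have confused the three \emph{coordinates} of a point of $\Z_{p-1}^3$ with the three \emph{summand sets} $L_1,L_2,L_3$. The ambient group here is $\Z_{p-1}^3$, so each $L_i$ is a two-element subset of $\Z_{p-1}^3$, not of $\Z_{p-1}$; and the defining condition ``$\theta^{x_1}+\theta^{x_2}+\theta^{x_3}=1$'' refers to the three coordinates $(x_1,x_2,x_3)$ of a \emph{single} point of $A$, not to three points being added. Concretely, if $\lambda_i\in L_i\subset\Z_{p-1}^3$ and $\lambda_1+\lambda_2+\lambda_3=(s_1,s_2,s_3)$, then membership in $A$ says $\theta^{s_1}+\theta^{s_2}+\theta^{s_3}=1$; your claimed relation $\theta^{\lambda_1}+\theta^{\lambda_2}+\theta^{\lambda_3}=1$ is not even well-typed. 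Writing $d_i=b_i-a_i\in\Z_{p-1}^3$ and a base point $c$, the eight conditions become
\[
\theta^{c_1}\!\!\prod_{i\in I}\theta^{d_{i,1}}+\theta^{c_2}\!\!\prod_{i\in I}\theta^{d_{i,2}}+\theta^{c_3}\!\!\prod_{i\in I}\theta^{d_{i,3}}=1\qquad(I\subset\{1,2,3\}),
\]
a genuinely nonlinear system in which no simple subtraction forces $\alpha_i=\beta_i$. The paper's proof is accordingly much more substantial: it shows that for every nonzero $y\in\Z_{p-1}^3$ the set $A\cap(A+y)$ is a Sidon set (this is the characterisation of $\Ll^{(3)}_{2,2,2}$-freeness), by first proving that some coordinate of $y$ differs from $0$ and from the other two, then parametrising $A\cap(A+y)$ inside an auxiliary curve $S=\{(x_1,x_2,x_3(x_1,x_2)):\lambda_1\theta^{x_1}+\lambda_2\theta^{x_2}=\mu\}$ and checking that $S$ is Sidon by a direct uniqueness argument on the resulting $2\times 2$ linear system in $\theta^{x_1},\theta^{x_2}$.

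Your cardinality approach is the same as the paper's; the discrepancy you noticed is that the line $u_1+u_2=0$ contributes only $p-3$ admissible pairs (one must also exclude $u_1=-1$, since then $u_2=1$), whereas the line $u_1+u_2=1$ contributes $p-2$. These two families are disjoint, giving $(p-2)^2-(p-3)-(p-2)=(p-3)^2$.
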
\begin{proof}The first observation is that, given any abelian group $G$, a set $A\subset G$ is free of subsets of the form $L_1+L_2+L_3,\ |L_1|=|L_2|=|L_3|=2$ if and only if
for any $y\in G,\ y\ne 0$, the sets $A^{y}=A\cap (A+y)$ do not contain subsets of the form $L_1+L_2,\ |L_1|=|L_2|=2$. This last condition is equivalent to say that $A^y$ is a Sidon set. This is what we have to prove.

For a fixed $y=(y_1,y_2,y_3)\ne (0,0,0)$ in $\Z_{p-1}^3$ we consider the set $A^y=A\cap (A+y)$.
It is clear from the definition that
$$A^y=\{(x_1,x_2,x_3): \text{ satisfying the conditions } (*)\}$$
$$(*)\qquad \begin{cases}\theta^{x_1}+\theta^{x_2}+\theta^{x_3}=1\\
 \theta^{x_1+y_1}+\theta^{x_2+y_2}+\theta^{x_3+y_3}=1\\
 x_i, x_i+y_i\ne 0,\ i=1,2,3. \end{cases}$$
We claim that, if $A^y$ is not empty, then one of the coordinates of $y$ is distinct from $0$ and distinct from the other two coordinates.

To prove this claim suppose that $(x_1,x_2,x_3)\in A^y$. Since $y=(y_1,y_2,y_3) \not = (0,0,0)$ we can assume that one of the coordinates is different from zero, say $y_3 \ne 0$. If $y_3\ne y_1$ and $y_3\ne y_2$ then the coordinate $y_3$ satisfies the statement of the claim. We consider now the possibility that $y_3=y_2$ (the case $y_3=y_1$ is similar). We will see that in this case, the coordinate $y_1$ satisfies the claim. If $y_1=0$ then the equations (*) imply that $\theta^{x_1}+\theta^{x_2}=\theta^{y_3}(\theta^{x_1}+\theta^{x_2})$ and then, since $y_3\ne 0$, we  have that $\theta^{x_1}+\theta^{x_2}=0$. But it implies that $x_3=0$ which is not possible by construction.  Furthermore it is clear that $y_1\ne y_2= y_3$. Otherwise we would have that $y_1=y_2=y_3$ and the equations (*) would imply that $y=(0,0,0$) which is a contradiction.

Let us assume that $y_3$ is distinct from $0$ and distinct from the other two coordinates. This implies that the elements $\lambda_1=\theta^{y_3}-\theta^{y_1}$, $\lambda_2=\theta^{y_3}-\theta^{y_2}$ and $\mu=\theta^{y_3}-1$ are distinct from zero. Hence taking the function $x_3(x_1,x_2)=\log_{\theta}(1-\theta^{x_1}-\theta^{x_2})$ we can deduce from the equations in (*) that the set $A^y$ is included in the set
$$ S=\{(x_1,x_2,x_3(x_1,x_2)):\ \lambda_1\theta^{x_1}+\lambda_2\theta^{x_2}=\mu,\ \theta^{x_1}+\theta^{x_2}\ne 1\}.$$

Next we show  that $S$ is a Sidon set, which implies that $A^y$ is a Sidon set.
For a given $(z_1,z_2,z_3)\ne (0,0,0)$, suppose that
\begin{equation}\label{si}(x_1,x_2,x_3(x_1,x_2))-(x_1',x_2',x_3(x_1',x_2'))=(z_1,z_2,z_3)\end{equation} with
\begin{equation}\label{sis}\begin{cases}\lambda_1\theta^{x_1}+\lambda_2\theta^{x_2}=\mu,\\ \lambda_1\theta^{x_1'}+\lambda_2\theta^{x_2'}=\mu.
\end{cases}\end{equation}
We will show that $(x_1,x_2,x_3(x_1,x_2))$ and $(x_1',x_2',x_3(x_1',x_2'))$ are uniquely determined by the conditions \eqref{si} and \eqref{sis}.
If $z_1=z_2=0$ then $(x_1,x_2)=(x_1',x_2')$, which implies that $z_3=0$.
 Therefore we can assume that $(z_1,z_2)\ne (0,0)$. In this case  equations in \eqref{si} and \eqref{sis} imply that $$\lambda_2\theta^{x_2}(1-\theta^{z_1-z_2})=\mu(1-\theta^{z_1}).$$ If $z_1=z_2$ then $\mu(1-\theta^{z_1})=0\implies z_1=z_2=0$. If $z_1\ne z_2$ then $x_2$ is uniquely determined and therefore also $x_1,x_1',x_2',x_3(x_1,x_2)$ and $x_3(x_1',x_2')$.

To complete the proof of the lemma we calculate the size of $A$:
\begin{eqnarray*}|A|&=&|\{(u,v,w)\in \F_p^3: u+v+w=1,\ u,v,w \not \in \{0,1\}\}|\\ &=&\sum_{w\not \in \{0,1\}}\sum_{v\not \in \{0,1,-w,1-w\}}1=\sum_{w\not \in \{0,1\}}\left (p-|\{ 0,1,-w,1-w\}|\right )\\ &=&p(p-2)-\sum_{w\not \in \{0,1,-1\}}|\{ 0,1,-w,1-w\}|-|\{ 0,1,1,2\}|\\
&=&p(p-2)-4(p-3)-3=(p-3)^2 .\end{eqnarray*}
\end{proof}

In the rest of this section we will prove the estimate \eqref{lbound:Ex222} by using Theorem \ref{lbound:Ex(Z_{(p-1)}^3,222)} and the following projection from $\Z_m^3$ to $\Z$.
For any integer $m\ge 2$ we consider the function
$\varphi_m:\Z_m^3\to \Z$ defined by
\begin{equation} \label{map:varphi}
\varphi_m(x_1,x_2,x_3)=(2m)^2x_1+(2m)x_2+x_3,
\end{equation}
where $x_1,x_2,x_3$ are  residues in $[0,m-1]$.
An easy, but important, property of this function is that
\begin{equation}\label{o}
\varphi_m(x)+\varphi_m(y)=\varphi_m(u)+\varphi_m(v)\implies x+y=u+v.
\end{equation}
This property  in a more general fashion is proved during the proof of Proposition 1 in section \ref{sec:proof-prop:E(n_1n_2..n_k)}.
\begin{lem}\label{T} If $A\subset \Z_m^3 $ is $\Ll_{2,2,2}^{(3)}$-free
 then the set $\varphi_m(A)$ is $\Ll_{2,2,2}^{(3)}$-free over the integers.
\end{lem}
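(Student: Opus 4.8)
\textbf{Proof plan for Lemma \ref{T}.} The plan is to argue by contrapositive: suppose $\varphi_m(A)$ contains a sumset $M_1+M_2+M_3$ with $|M_i|=2$ over the integers, and deduce that $A$ itself contains a sumset $L_1+L_2+L_3$ with $|L_i|=2$ in $\Z_m^3$, contradicting that $A$ is $\Ll_{2,2,2}^{(3)}$-free. Write $M_i=\{\varphi_m(a_i),\varphi_m(b_i)\}$ with $a_i,b_i\in A$ and $a_i\ne b_i$; the eight sums $\varphi_m(a_1)+\varphi_m(a_2)+\varphi_m(a_3),\ \varphi_m(a_1)+\varphi_m(a_2)+\varphi_m(b_3),\ \dots$ all lie in $\varphi_m(A)$, so each equals $\varphi_m$ of some element of $A$.

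The key point is additivity: since $\varphi_m$ is linear in each coordinate when there is no carrying, but carrying does occur, I instead use the structural property of $\varphi_m$ analogous to \eqref{o}. First I would establish the three-summand version, namely that for $x,y,z,u,v,w\in\Z_m^3$ (coordinates taken in $[0,m-1]$), $\varphi_m(x)+\varphi_m(y)+\varphi_m(z)=\varphi_m(u)+\varphi_m(v)+\varphi_m(w)$ forces $x+y+z\equiv u+v+w\pmod{m}$ coordinatewise; this follows because each coordinate-sum of three residues lies in $[0,3m-3]$, so the integer $\varphi_m(x)+\varphi_m(y)+\varphi_m(z)$ written in base $2m$ has digits that determine $x_j+y_j+z_j$ as integers (the base $2m$ is large enough that sums of three digits from $[0,m-1]$ plus any carry of at most $2$ from the lower position stay below $2m$, so digits don't collide). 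Hence from the eight equations expressing the sums as $\varphi_m$ of elements of $A$, together with the fact that $\varphi_m$ is injective on $[0,m-1]^3$, I get that the corresponding elements of $A$ satisfy the same eight additive relations mod $m$ in each coordinate.

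Concretely, for each of the eight choices $(\epsilon_1,\epsilon_2,\epsilon_3)\in\{0,1\}^3$ let $c_{\epsilon_1,\epsilon_2,\epsilon_3}\in A$ be the element with $\varphi_m(c_{\epsilon_1,\epsilon_2,\epsilon_3})$ equal to the corresponding sum; the relation just proved shows $c_{\epsilon_1,\epsilon_2,\epsilon_3}\equiv d_1(\epsilon_1)+d_2(\epsilon_2)+d_3(\epsilon_3)\pmod m$ in $\Z_m^3$, where $d_i(0)=a_i$ and $d_i(1)=b_i$, provided we can check these are eight \emph{distinct} elements and that $L_i:=\{d_i(0)-(\text{common shift}),d_i(1)-\dots\}$ genuinely has two elements. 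Taking $L_i=\{a_i,b_i\}$ (reduced mod $m$) we have $|L_i|=2$ since $a_i\ne b_i$ as elements of $\Z_m^3$ — here I must be slightly careful: $a_i\ne b_i$ was only asserted as integers $\varphi_m(a_i)\ne\varphi_m(b_i)$, but injectivity of $\varphi_m$ on $[0,m-1]^3$ upgrades this to $a_i\ne b_i$ in $\Z_m^3$. Then $L_1+L_2+L_3\subset A$ in $\Z_m^3$, contradicting the hypothesis. The main obstacle is the base-$2m$ carrying analysis in the three-summand version of \eqref{o}: one must verify that the factor $2$ (rather than a tighter bound) in $2m$ is exactly what is needed so that sums of three digits plus lower-order carries never overflow into the next digit, and the excerpt's remark that ``this property in a more general fashion is proved during the proof of Proposition 1'' suggests I should simply invoke that general statement rather than redo the digit bookkeeping here.
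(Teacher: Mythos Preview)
Your approach has a genuine gap: the ``three-summand version'' of \eqref{o} that you rely on is \emph{false}. The factor $2$ in the base $2m$ is calibrated so that sums of \emph{two} residues from $[0,m-1]$ never overflow a base-$2m$ digit (since $2(m-1)<2m$), but sums of \emph{three} such residues can reach $3m-3\ge 2m$ for $m\ge 3$, and the resulting carries destroy the digit-by-digit recovery. Concretely, take $m=3$: with $x=y=z=(0,0,2)$ and $c=(0,1,0)$ one has $\varphi_3(x)+\varphi_3(y)+\varphi_3(z)=2+2+2=6=\varphi_3(c)$, yet $x+y+z=(0,0,6)\not\equiv(0,1,0)\pmod 3$ in the second coordinate. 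So from $\varphi_m(d_1)+\varphi_m(d_2)+\varphi_m(d_3)=\varphi_m(c_{\epsilon})$ you cannot deduce $d_1+d_2+d_3\equiv c_{\epsilon}$ in $\Z_m^3$, and the argument collapses. (Your own carrying bookkeeping is arithmetically wrong: three digits from $[0,m-1]$ already sum to as much as $3m-3$, which exceeds $2m$ once $m\ge 3$; the general fact proved for Proposition~\ref{prop:E(n_1n_2..n_k)} is again only the two-summand statement.)

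There is also a setup error: the summand sets $M_i$ in a decomposition $M_1+M_2+M_3\subset\varphi_m(A)$ are arbitrary two-element subsets of $\Z$; nothing forces $M_i\subset\varphi_m(A)$, so you cannot write $M_i=\{\varphi_m(a_i),\varphi_m(b_i)\}$ with $a_i,b_i\in A$. The paper sidesteps both issues by a different route: it characterizes a $3$-dimensional Hilbert cube $\{x_1,\dots,x_8\}$ intrinsically by a system of \emph{two-term} sum equations (such as $x_2+x_3=x_1+x_4$), and then applies the genuine two-summand property \eqref{o} to pull each such equation back from $\varphi_m(A)$ to $A\subset\Z_m^3$. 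That is exactly the missing idea: rewrite the Hilbert-cube relations so that only pairwise sums appear, and then the base $2m$ suffices.
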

\begin{proof}
A Hilbert cube of dimension $3$ can be also described as a multiset $\{x_1,\dots,x_8\}$ with $x_2,x_3,x_5\ne x_1$ satisfying the following conditions (see the picture below):
$$\begin{cases}
x_2-x_1=x_4-x_3=x_6-x_5=x_8-x_7\\
x_3-x_1=x_7-x_5
\end{cases}$$

\medskip

\begin{picture}(0,0)\put(80,0){\circle*{5}}\put(95,0){\circle*{5}}\put(120,0){\circle*{5}}\put(135,0){\circle*{5}}
\put(230,0){\circle*{5}}\put(245,0){\circle*{5}}\put(270,0){\circle*{5}}\put(285,0){\circle*{5}}
\put(77,-15){$x_1$}\put(92,-15){$x_2$}\put(117,-15){$x_3$}\put(132,-15){$x_4$}
\put(227,-15){$x_5$}\put(242,-15){$x_6$}\put(267,-15){$x_7$}\put(285,-15){$x_8$}
\end{picture}

\medskip

\

This system of equations is equivalent to the following system in term of sums:
\begin{equation}\label{sumas}\begin{cases}
x_2+x_3=x_4+x_1,\\
x_2+x_5=x_6+x_1,\\
x_2+x_7=x_8+x_1,\\
x_3+x_5=x_7+x_1.
\end{cases}\end{equation}

Suppose that $\{\varphi_m(a_1),\dots, \varphi_m(a_8)\}$  with $a_1,\dots, a_8\in A$ is a Hilbert cube of dimension $3$ contained in $\varphi_m(A)$. Since the elements $\varphi_m(a_1),\dots,$ $\varphi_m(a_8)$ satisfy the  four equations in \eqref{sumas}, the observation  \eqref{o} implies that also the elements  $a_1,\dots, a_8$ satisfy the analogous equations in $\Z_m^3$ and therefore the multiset $\{a_1,\dots, a_8\}$   is a Hilbert cube of dimension $3$  contained in $A$.
\end{proof}

To attain bound \eqref{lbound:Ex222} we apply Lemma \ref{T} to the set $A$ described in Proposition \ref{H3}. It is easy to see that $\varphi_m$ defined by \eqref{map:varphi} is injective thus $|\varphi_{p-1}(A)|=|A|=(p-3)^2$ and we also have
$\varphi_{p-1}(A)\subset [0,4(p-1)^3)$.
Let $p$ be the largest prime $p$ such that $4(p-1)^3\le n$, that is $p=(n/4)^{1/3}(1+o(1))$. Then we have $$F(n,\Ll_{2,2,2}^{(3)})\ge |\varphi_{p-1}(A)|=(p-3)^2= (n/4)^{2/3}(1+o(1)).$$

\subsection{Proof of Proposition \ref{prop:E(n_1n_2..n_k)}} \label{sec:proof-prop:E(n_1n_2..n_k)}
To prepare for the proof we first generalize the idea that we used during the proof of Lemma \ref{T}. The sums in a sumset might be repeated thus we have a multiset in the general case. However a minimum number of the sums are distinct. The set $L_1$ is translated $\ell_2$ times to form a pattern which is in turn translated $\ell_3$ times and so on. The next lemma characterizes sumsets as multisets satisfying a number of conditions.

\begin{lem} \label{lem:sumset-as-multiset} Let $r\ge 2$ and $ 2\le \ell_1\le \cdots \le \ell_r$ be given, and 
a  sumset $L_1+\cdots+L_r \in \Ll^{(r)}_{\ell_1,\ldots,\ell_r}$, with summands $L_s= \{ \lambda_{s1},\ldots, \lambda_{s\ell_s} \}, \; (1 \le s \le r).$ 
We enumerate the sums using a multi-index as follows:
$$x_{i_1 i_2 \cdots i_r} := \sum_{s=1}^r \lambda_{s i_s}, \quad (1 \le i_s \le \ell_s, \: 1 \le s \le r).$$
Then the following $\sum_{k=1}^r \binom{\ell_k}{2}$ conditions hold:
\begin{align}
x_{1 \cdots 1 i_s 1 \cdots 1} &\not = x_{1 \cdots 1 j_s 1 \cdots 1}, \quad (1\le i_s < j_s \le \ell_s, s=1, \ldots, r). \label{eq:ss-ms-g_distinct}
\end{align}  
Furthermore the following $\ell_1 \cdots \ell_r -(\ell_1+\cdots + \ell_r) + (r-1)$ equalities hold:
\begin{align}
x_{1\cdots 1}+ x_{1 \cdots 1 i_s  i_{s+1}\cdots i_r}&= x_{1 \cdots 1 i_s 1  \cdots 1} + x_{1 \cdots 1 i_{s+1} \cdots i_r},  \quad \big(i_s \ne 1, \; (i_{s+1}, \ldots ,i_r) \not = (1,\dots, 1), 1 \le s <r \big). \label{eq:ss-ms-g_sums}
\end{align} 
In the other direction, suppose that a given multiset $X$ of $\ell_1 \cdots \ell_r$ elements can be multi-indexed as follows  
$$X = \lbrace x_{i_1 \cdots i_r},  \quad  1 \le i_s \le \ell_s, 1 \le s \le r \rbrace,$$  so that conditions  \eqref{eq:ss-ms-g_distinct} and \eqref{eq:ss-ms-g_sums} are satisfied. Then $ X \in \Ll^{(r)}_{\ell_1, \ldots,\ell_r}$. 
\end{lem}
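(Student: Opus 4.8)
The plan is to prove both directions by exhibiting explicit affine relations among the multi-indexed sums. For the forward direction, I would start from the definition $x_{i_1\cdots i_r}=\sum_{s=1}^r \lambda_{s i_s}$ and simply compute. The inequalities \eqref{eq:ss-ms-g_distinct} say that along a single coordinate axis through the ``origin'' index $(1,\dots,1)$ the sums are distinct; since $x_{1\cdots 1 i_s 1\cdots 1}-x_{1\cdots 1 j_s 1\cdots 1}=\lambda_{s i_s}-\lambda_{s j_s}$ and the $\lambda_{s i_s}$ are distinct elements of $L_s$ (because $|L_s|=\ell_s$), this is immediate. For the equalities \eqref{eq:ss-ms-g_sums}, substituting the definition into both sides gives, on the left, $\bigl(\sum_s \lambda_{s1}\bigr)+\bigl(\lambda_{si_s}+\sum_{t>s}\lambda_{t i_t}+\sum_{t<s,\ t\ne s}\lambda_{t1}\bigr)$ — more carefully, $x_{1\cdots 1}=\sum_{t=1}^r\lambda_{t1}$ and $x_{1\cdots 1 i_s i_{s+1}\cdots i_r}=\sum_{t<s}\lambda_{t1}+\lambda_{s i_s}+\sum_{t>s}\lambda_{t i_t}$; and on the right $x_{1\cdots 1 i_s 1\cdots 1}=\sum_{t\ne s}\lambda_{t1}+\lambda_{s i_s}$ and $x_{1\cdots 1 i_{s+1}\cdots i_r}=\sum_{t\le s}\lambda_{t1}+\sum_{t>s}\lambda_{t i_t}$. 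Adding each pair, both sides equal $\sum_{t<s}2\lambda_{t1}+\lambda_{s1}+\lambda_{s i_s}+\sum_{t>s}(\lambda_{t1}+\lambda_{t i_t})$, so the identity holds as a formal consequence of commutativity and associativity in $G$. The only genuine content is the bookkeeping that the stated count $\ell_1\cdots\ell_r-(\ell_1+\cdots+\ell_r)+(r-1)$ is exactly the number of index tuples $(i_s,i_{s+1},\dots,i_r)$ with $i_s\ne 1$ and $(i_{s+1},\dots,i_r)\ne(1,\dots,1)$, ranged over $1\le s<r$: this is a telescoping count that I would verify by induction on $r$ (or by noting that these relations, together with the $\sum_k\binom{\ell_k}{2}$ inequalities being irrelevant to the count, express every $x_{i_1\cdots i_r}$ in terms of the ``coordinate-axis'' values $x_{1\cdots 1 i_s 1\cdots 1}$, of which there are $\ell_1+\cdots+\ell_r-(r-1)$, so the number of independent relations is $\ell_1\cdots\ell_r$ minus that).

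For the converse direction, suppose a multiset $X=\{x_{i_1\cdots i_r}\}$ satisfies \eqref{eq:ss-ms-g_distinct} and \eqref{eq:ss-ms-g_sums}. I would define the candidate summands by $\lambda_{s1}:=$ (a normalization) and $\lambda_{s i_s}:=x_{1\cdots 1 i_s 1\cdots 1}-x_{1\cdots 1}+(\text{correction})$; concretely, set $\lambda_{s i_s}=x_{1\cdots 1 i_s 1\cdots 1}-c_s$ where the constants $c_s$ are chosen so that $\sum_s \lambda_{s1}=x_{1\cdots 1}$ — e.g. $\lambda_{s1}=0$ for $s\ge 2$ and $\lambda_{11}=x_{1\cdots 1}$, then $\lambda_{s i_s}=x_{1\cdots 1 i_s 1\cdots 1}-x_{1\cdots 1}$ for $s\ge 2$ and $\lambda_{1 i_1}=x_{i_1 1\cdots 1}$. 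Then $L_s=\{\lambda_{s1},\dots,\lambda_{s\ell_s}\}$ has exactly $\ell_s$ elements by \eqref{eq:ss-ms-g_distinct}. The remaining task is to show $x_{i_1\cdots i_r}=\sum_s\lambda_{s i_s}$ for \emph{all} multi-indices, which I would do by induction on the number of coordinates of $(i_1,\dots,i_r)$ that differ from $1$, using the relations \eqref{eq:ss-ms-g_sums} to ``peel off'' one non-trivial coordinate at a time: given $(i_1,\dots,i_r)$ with some $i_s\ne 1$ (take the smallest such $s$), rewrite $x_{1\cdots 1 i_s i_{s+1}\cdots i_r}$ via \eqref{eq:ss-ms-g_sums} in terms of $x_{1\cdots 1 i_s 1\cdots 1}$ and $x_{1\cdots 1 i_{s+1}\cdots i_r}$, the latter having fewer non-trivial coordinates, and apply the induction hypothesis. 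This shows $X=L_1+\cdots+L_r$ with $|L_s|=\ell_s$, hence $X\in\Ll^{(r)}_{\ell_1,\dots,\ell_r}$.

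I expect the main obstacle to be purely notational rather than mathematical: keeping the multi-index manipulations in \eqref{eq:ss-ms-g_sums} unambiguous, and correctly identifying which set of index tuples indexes the equalities so that the count $\ell_1\cdots\ell_r-(\ell_1+\cdots+\ell_r)+(r-1)$ comes out right and the relations are genuinely independent (so that one cannot derive fewer or need more). A secondary point of care is that in the converse direction one must check the relations \eqref{eq:ss-ms-g_sums} are being used in a way that terminates — i.e. that the ``peeling'' induction is well-founded — which is why inducting on the number of non-$1$ coordinates, and always eliminating the \emph{first} such coordinate, is the clean way to organize it. No deep tool is needed; Lemma \ref{lem:degen-sumset-contains-AP} and the earlier lemmata are not required here, only elementary manipulation in the abelian group $G$.
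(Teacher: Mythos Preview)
Your proposal is correct and matches the paper's proof essentially line by line: the paper verifies \eqref{eq:ss-ms-g_distinct} via the same difference $\lambda_{si_s}-\lambda_{sj_s}$, checks \eqref{eq:ss-ms-g_sums} by the same substitution, computes the count by the explicit telescoping sum $\sum_{s=1}^{r-1}(\ell_{s+1}\cdots\ell_r-1)(\ell_s-1)$, and for the converse sets $L_1=\{x_{i_11\cdots1}\}$, $L_s=-x_{1\cdots1}+\{x_{1\cdots1 i_s 1\cdots1}\}$ for $s\ge2$ (exactly your choice $\lambda_{11}=x_{1\cdots1}$, $\lambda_{s1}=0$), then peels off coordinates left to right using \eqref{eq:ss-ms-g_sums}. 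Your observation that the induction is well-founded because each step strictly reduces the number of non-$1$ coordinates is the same mechanism the paper invokes when it writes out the chain $x_{i_1\cdots i_r}=x_{i_11\cdots1}+(-x_{1\cdots1}+x_{1i_21\cdots1})+\cdots+(-x_{1\cdots1}+x_{1\cdots1 i_r})$.
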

\begin{proof}
For $s=1, \ldots, r$  the cardinality of any translate of $L_s$ is $\ell_s$. Hence all the elements in the translate    
\begin{align*}
\sum_{ \substack { 1 \le t \le r \\ t \not = s} } \lambda_{t1} +  L_s  = \lbrace x_{1 \cdots 1 i_s 1 \cdots 1} \colon i_s=1, \ldots, \ell_s  \rbrace
\end{align*}
must be distinct and so \eqref{eq:ss-ms-g_distinct} holds.

Let us fix $s$, with $s \le r-1$, and also fix $i_{s+1},\ldots ,i_r$, with $ (i_{s+1},\ldots ,i_r) \not = (1,\dots, 1)$. Varying just the $s^{\text{th}}$ place of the multi-index we have that the following $\ell_s-1$ equalities on differences hold:
$$x_{1 \cdots 1 i_s i_{s+1}  \cdots i_r}-x_{1 \cdots 1 1 i_{s+1}  \cdots i_r}=\lambda_{si_s}-\lambda_{s1}=x_{1 \cdots 1 i_s 1  \cdots 1}-x_{1 \cdots 1}, \quad ( i_s \not = 1),$$
We now vary $i_{s+1},\ldots ,i_r$,  and we have that  the following $(\ell_{s+1}\cdots \ell_r -1)(\ell_s-1)$ equalities on sums hold:
$$x_{1\cdots 1}+ x_{1 \cdots 1 i_s i _{s+1}\cdots i_r}= x_{1 \cdots 1 i_s 1  \cdots 1} + x_{1 \cdots 1 i_{s+1} \cdots i_r},\quad \big(i_s \ne 1, \: (i_{s+1}, \ldots ,i_r) \not = (1,\dots, 1)\big).$$
As this is true for  $1 \le s < r$ we have \eqref{eq:ss-ms-g_sums}.
Summing in $s$, the total number of equalities is
\begin{align*}
\sum_{s=1}^{r-1} &(\ell_{s+1} \cdots \ell_r -1)(\ell_s-1) = \sum_{s=1}^{r-1} \ell_s \cdots \ell_r -\ell_{s+1} \cdots \ell_r -\ell_s+1  \\
&= \ell_1 \cdots \ell_r  + \sum_{s=2}^{r-1}   - \ell_{s} \cdots \ell_r + \ell_{s} \cdots \ell_r  + \sum_{s=1}^{r-1} (-\ell_s + 1)\\
&=\ell_1 \cdots \ell_r -(\ell_1+\cdots + \ell_r) + (r-1),
\end{align*}
as claimed.

To prove the second part of the lemma, suppose $X = \lbrace x_{i_1 \cdots i_r},  \;  1 \le i_s \le \ell_s, 1 \le s \le r \rbrace$  satisfies conditions  \eqref{eq:ss-ms-g_distinct} and \eqref{eq:ss-ms-g_sums}. We define $L_1 :=  \lbrace x_{11 \cdots 1},x_{21 \cdots 1}, \ldots, x_{\ell_1 1 \cdots 1}\rbrace$ and for $s=2,\ldots,r$ we define
$$
L_s :=  - x_{11 \cdots 1} + \lbrace x_{1 \cdots 1 i_s 1  \cdots 1} \colon i_s=1,\ldots,\ell_s \rbrace.
$$
Condition \eqref{eq:ss-ms-g_distinct} implies that $|L_s| = \ell_s \; (1 \le s \le r),$ and so $L_1+\cdots+L_r \in \Ll^{(r)}_{\ell_1, \ldots\ell_r}$. It will suffice to show that $x_{i_1 \cdots i_r} \in L_1+\cdots+L_r$ for any  element of $X$. 
Note that $x_{1 \cdots 1} \in L_1$ and $0 \in \cap_{s=2}^r L_s$ and thus trivially $x_{1 \cdots 1} \in L_1+\cdots+L_r.$ 
In other case $(i_1, \ldots, i_r) \not = (1, \ldots, 1)$ and then using $r-1$ times \eqref{eq:ss-ms-g_sums} we can write
\begin{align*}
x_{i_1 \cdots i_r} &= x_{i_1 1  \cdots 1} -x_{1\cdots 1}+ x_{1 i_2 \cdots i_r}\\
&= x_{i_1 1  \cdots 1} -x_{1\cdots 1}+ x_{1 i_2 1 \cdots 1} -x_{1\cdots 1} +  x_{1 1 i_3 \cdots i_r}\\
&\vdots\\
&= x_{i_1 1  \cdots 1} +( -x_{1\cdots 1}+ x_{1 i_2 1 \cdots 1}) +\cdots +( - x_{1\cdots 1} + x_{1 \cdots 1 i_r})\\
& \in L_1+L_2+\cdots+L_r. 
\end{align*} \qedhere
\end{proof}

Now we are ready to prove Proposition \ref{prop:E(n_1n_2..n_k)}.
We remind that \emph{given $r_1,\cdots,r_j,\cdots$ (the base), any non negative integer can be written in a unique way in the form
\[
y_1+y_2r_1+y_3r_1r_2+\cdots+y_jr_1r_2\cdots r_{j-1}+\cdots,
\]
with digits $y_j$ satisfying $0\le y_j < r_j \; (j\ge1)$}.

The map $ \varphi \colon \Z_{n_1}\times \cdots \times\Z_{n_k} \to \Z$ given by
$$
\varphi(x_1,\ldots,x_k)=x_1 + x_2 (2 n_1)  +\cdots+ x_k(2 n_1)(2 n_2)\cdots (2 n_{k-1}),
$$
is injective, to see it just note $x_1,\ldots,x_k$ are the $k$ digits of the integer $\varphi(x_1,\ldots,x_k)$ in any base starting with $\lbrace 2n_1, 2n_2, \ldots,2n_{k-1},2n_k \rbrace$.
One important property of $\varphi$ is
\begin{equation} \label{eq:1}
\varphi(x) + \varphi(y)=\varphi(u) + \varphi(v) \Rightarrow x+y=u+v.
\end{equation}
To prove it suppose $\varphi(x) + \varphi(y)=\varphi(u) + \varphi(v)$, that is to say
\begin{equation} \label{eq:2}
\sum_{1}^{k}  (x_j+y_j)(2 n_1)(2 n_2)\cdots (2 n_{j-1})=\sum_{1}^{k}  (u_j+v_j)(2 n_1)(2 n_2)\cdots (2 n_{j-1}).
\end{equation}
For every $j=1,\ldots, k$ we have $0 \le x_j, y_j, u_j, v_j < n_j$ which  implies $$0 \le x_j+y_j, u_j+v_j < 2n_j.$$ The expresion of an integer in the base is unique, thus by \eqref{eq:2} we have $x_i+y_j = u_j+v_j \: (1\le j \le k),$ which implies that $x+y=u+v.$

We claim that $\varphi$ preserves the property of being $\Ll_{\ell_1,\ldots, \ell_r}^{(r)}$-free.
Indeed, let $A \subset \Z_{n_1}\times \cdots \times\Z_{n_k}$ be any set such that $\varphi(A)$ contains a sumset $Y$ of the class $\Ll_{\ell_1,\ldots, \ell_r}^{(r)}$. By Lemma \ref{lem:sumset-as-multiset}, $Y$ can be multi-indexed $\lbrace y_{i_1 \cdots i_r}, \colon 1 \le i_s \le \ell_s, 1 \le s \le r \rbrace$  and satisfies conditions \eqref{eq:ss-ms-g_distinct} and \eqref{eq:ss-ms-g_sums}. The function $\varphi$ is injective and so $\varphi^{-1}(Y)=Z$ can multi-indexed in the natural way : $z_{i_1 \cdots i_r} = \varphi^{-1}(y_{i_1 \cdots i_r}).$ Because $\varphi$ is injective we have that $Z$ satisfies the conditions \eqref{eq:ss-ms-g_distinct}. The property  \eqref{eq:1} implies that $Z$ satisfies the equalities \eqref{eq:ss-ms-g_sums}. Hence the second part of Lemma \ref{lem:sumset-as-multiset} implies that $Z=\varphi^{-1}(Y)$ is  a sumset of the class $\Ll_{\ell_1,\ldots, \ell_r}^{(r)}$ in the group $\Z_{n_1}\times \cdots \times\Z_{n_k}.$

Note that the image of $\varphi$ is in the interval $[2^{k-1} n_1 n_2 \cdots n_k],$ because we have $x_i \le n_i-1, (1\le i \le k)$ and then
\begin{align*}
\varphi(x_1,\ldots, x_k) &\le (n_1-1)+(n_2-1)(2n_1)+(n_3-1)(2n_1)(2n_2)+\cdots\\
&= -1 +n_1 -2n_1 + 2n_1n_2 - 4n_1n_2 + 4n_1 n_2 n_3 +\cdots\\
&=-1 -n_1- 2n_1n_2 - \cdots - 2^{k-2}n_1n_2\cdots n_{k-1} + 2^{k-1}n_1 n_2 \cdots n_k\\
&< 2^{k-1}n_1 n_2 \cdots n_k.
\end{align*}

Therefore for any $\Ll_{\ell_1,\ldots, \ell_r}^{(r)}$-free set $A \subset \Z_{n_1}\times \cdots \times\Z_{n_k}$ we have
\[ |A|=|\varphi(A)| \le F(2^{k-1} n_1 n_2 \cdots n_k, \Ll^{(r)}_{\ell_1,\dots,\ell_r}),
\]
which implies $ F(\Z_{n_1}\times \cdots \times\Z_{n_k}, \Ll_{\ell_1,\ldots, \ell_r}^{(r)})\le F(2^{k-1} n_1 n_2 \cdots n_k, \Ll^{(r)}_{\ell_1,\dots,\ell_r})$.

\section{Connections with Tur\'an problem on graphs and hypergraphs} \label{sec:Kfree-hypergraphs}
\subsection{Proof of Proposition \ref{ex-ge-Ex}}
Let $A\subset G$ be any set free of subsets of the form $L_1+\cdots +L_r$, with $|L_i|=\ell_i,\ i=1,\dots,r.$
Consider the hyper-graph $\mathcal G=(V,\mathcal E)$ where $V=G$ and $$\mathcal E=\left \{\{x_1,\dots,x_r\}\in \binom{G}r:\ x_1+\cdots+x_r\in A\right \}.$$ We claim that $\mathcal G$ does not contain a copy of the $r$-uniform hyper-graph $K^{(r)}_{\ell_1,\dots,\ell_r}$ (see definition \ref{defn:r-uniform_hypergraph}). Otherwise there exist $L_1,\dots, L_r$ with $|L_i|=\ell_i,\ i=1,\dots,r,$ such that all the hyper-edges $\{x_1,\dots,x_r\}$ with $x_i\in L_i$ belong to $\mathcal E$. But this is equivalent to say that $x_1+\cdots +x_r\in A$ for all $(x_1,\dots, x_r)\in L_1\times \cdots \times L_r$. In other words, that $L_1+\cdots+L_r\subset A$, which is not possible because $A$ is $\Ll_{\ell_1,\ldots,\ell_r}^{(r)}$-free. Hence we have
$$ \ex\left (n;K^{(r)}_{\ell_1,\dots,\ell_r}\right )\ge \#\left \{ \{x_1,\dots, x_r\}\in \binom{G}r:\ x_1+\cdots +x_r\in A\right \},$$
an inequality which can be alternatively written as follows
$$ \ex(n;K^{(r)}_{\ell_1,\dots,\ell_r})\ge \sum_{y\in A}R_r(y),$$ where $R_r(y)=\#\{\{x_1,\dots,x_r\}\in \binom{G}r:\ x_1+\cdots +x_r=y\}$.

Note that, for any given $x\in G$, as $A$ is $\Ll_{\ell_1,\ldots,\ell_r}^{(r)}$-free then $A+x$ has the same property. This implies the last inequality  also holds if we sum in $y \in A+x$.  Hence we can write
\begin{eqnarray*}
\ex(n;K^{(r)}_{\ell_1,\dots,\ell_r})&\ge &\frac 1{|G|}\sum_{x\in G}\sum_{y\in A+x}R_r(y)=\frac 1{|G|} \sum_{y\in G}R_r(y)\#\{x:\ x\in y-A\}
\\ &=&\frac{|A|}{|G|} \sum_{y\in G}R_r(y)=\frac{|A|}{|G|} \binom{|G|}r=\frac{|A|}n\binom nr.
\end{eqnarray*}

\subsection{Proof of Theorem \ref{thm:ubound_Kl..l-free_HGraph}}

The proof uses induction in $r$. The case $r=2$  was proved by K\"ovari, S\'os and Tur\'an \cite{KST}.

For $r\ge 3$ to ease the notation we write $e_{r}=ex(n,K^{(r)}_{\ell_1,\ldots,\ell_r})$ and $e_{r-1}=ex(n,K^{(r-1)}_{\ell_2,\ldots,\ell_{r}})$. Suppose $\mathcal H=(V,\mathcal E)$ is one extreme $r$-hypergraph which is free of $K^{(r)}_{\ell_1,\ldots,\ell_r}$ hypergraphs. We have 
\begin{equation} \label{eq:ubound_Kl..l-free_HGraph-2}
|\mathcal E|=e_{r}. 
\end{equation}
The neighbourhood of any vertex $v$ in $V$ is the collection of all $(r-1)$-subsets of  $V$ that form a $r$-hyperedge when combined with $v$: 
$$N(v)= \left \lbrace U \in  \binom{V}{r-1} \colon   \{v\} \cup U   \in \mathcal E \right \rbrace.$$ 
For any fixed $\ell_1$ vertices $v_1,\ldots,v_{\ell_1}$ let $\mathcal E'$ denote $N(v_1) \cap \cdots \cap N(v_{\ell_1}).$ The set 
$\mathcal E'$ can be considered as a collection of $(r-1)$-hyperedges. Let  $V' \subset V$ denote the vertices connected by $\mathcal E'$ thus forming a $(r-1)$-hypergraph $\mathcal H'=(V',\mathcal E')$ .  
Assume that  $\mathcal H'$ contains one $K^{(r-1)}_{\ell_2,\ldots,\ell_r}$ hypergraph, say $\mathcal{I}=(V(\mathcal{I}), \mathcal E(\mathcal{I}))$. Then the hypergraph 
$$\left(\{ v_1,\ldots, v_{\ell_1}\} \cup V(\mathcal{I})  , \left \{ \{ v_i \} \cup U   \mid U \in \mathcal E(\mathcal{I}), \: i=1,\ldots,\ell_1 \right \} \right)$$ 
would be $r$-uniform $K^{(r)}_{\ell_1,\ldots,\ell_r}$ and it would be included in $\mathcal{H}$, which is impossible. Hence  $(V',\mathcal E')$ must be $K^{(r-1)}_{\ell_2,\ldots,\ell_r}$-free and so we have 
\begin{equation} \label{eq:ubound_Kl..l-free_HGraph-3}
|\mathcal E'| = |N(v_1) \cap \cdots \cap N(v_{\ell_1})| \le e_{r-1} \quad \text{for any } v_1,\ldots,v_{\ell_1}.
\end{equation}

In order to use Theorem \ref{thm:overlapping-thm}, let us define the random variable $X \colon V \to \binom{V}{r-1}$ with uniform probability law $\Pr{X=U} =1/\binom{n}{r-1},$ for every $U \in \binom{V}{r-1}.$ For any $v \in V$  we define the event $E_v =  \{X \in N(v)\}.$  Note that $\sum_{v \in V} |N(v)|$  counts the number of all subsets of $r-1$ elements in every hyperedge of $V$, thus $\sum_{v \in V} |N(v)|= \binom{r}{r-1} \: |\mathcal E|$ and then we can write 
$$\sigma_1= \sum_{v \in V} \Pr{E_v}= \frac{1}{\binom{n}{r-1}} \sum_{v \in V} |N(v)|=\frac{r \: |\mathcal E|}{\binom{n}{r-1}}=\frac{r \: e_{r}}{\binom{n}{r-1}},$$ 
where in the last equality we have used \eqref{eq:ubound_Kl..l-free_HGraph-2}. 
Theorem \ref{thm:overlapping-thm} implies
\begin{equation*} \label{eq:ubound_Kl..l-free_HGraph-4}
\frac{1}{\binom{n}{r-1}} \sum_{\{ v_1,\ldots, v_{\ell_1} \}\in \binom{V}{\ell_1}} |N(v_1) \cap \cdots \cap N(v_{\ell_1})| \ge \binom{r \: e_{r}/\binom{n}{r-1}}{\ell_1}.
\end{equation*}
Using the inequality \eqref{eq:ubound_Kl..l-free_HGraph-3} we obtain
\begin{equation} \label{eq:ineq}
\frac{\binom{n}{\ell_1} \; e_{r-1}}{\binom{n}{r-1}} \ge \binom{r \; e_{r}/\binom{n}{r-1}}{\ell_1}.
\end{equation}
Now we estimate the left term in \eqref{eq:ineq}: 
$$
\frac{e_{r-1}\binom{n}{\ell_1}}{\binom{n}{r-1}} =  \frac{(r-1)!}{\ell_1 !} \frac{(n-(r-1))!}{(n-\ell_1)!} \: e_{r-1}.
$$
In the case $r-1 \le \ell_1$ we have
\[
\frac{e_{r-1}\binom{n}{\ell_1}}{\binom{n}{r-1}} =  \frac{(r-1)!}{\ell_1 !} \: e_{r-1}  (n-r+1)\cdots(n-\ell_1+1) \le \frac{(r-1)!}{\ell_1 !} \: e_{r-1} \: n^{\ell_1 - r+1}. 
\]
Otherwise we have $r -1 > \ell_1$ and then
\begin{align*}
\frac{e_{r-1}\binom{n}{\ell_1}}{\binom{n}{r-1}} &=  \frac{(r-1)!}{\ell_1 !} \: \frac{e_{r-1}}{(n-\ell_1) \cdots (n-r+2)} \le  \frac{(r-1)!}{\ell_1 !} \: \frac{e_{r-1}}{(n-r+2)^{r-1-\ell_1}}\\
&\le \frac{(r-1)!}{\ell_1 !} \: e_{r-1} \: (n-r+2)^{\ell_1 - r+1}.
\end{align*}
Hence we have 
$$
\frac{e_{r-1}\binom{n}{\ell_1}}{\binom{n}{r-1}} \le \frac{(r-1)!}{\ell_1 !} \: e_{r-1} \: n^{\ell_1 - r+1}(1+o(1)), \quad (n \to \infty). 
$$
A lower bound for the right term in \eqref{eq:ineq} is
\[
\binom{r \: e_{r}/\binom{n}{r-1}}{\ell_1} 
\ge \binom{\frac{ r! \: e_{r}}{ n^{r-1}}}{\ell_1} \ge \frac{\left(\frac{ r! \: e_{r}}{ n^{r-1}} -(\ell_1 -1) \right)^{\ell_1}}{\ell_1 !}.
\]
Combining the last two estimates and \eqref{eq:ineq} we can write 
\begin{equation} \label{eq:tbd-5}
\left( \frac{r! \: e_{r}}{n^{r-1}}-(\ell_1 -1)\right)^{\ell_1} \le (r-1)! \: e_{r-1} \: n^{\ell_1-r+1}(1+o(1)), \qquad (n \to \infty).
\end{equation}
By \eqref{bounds} we have that $e_r \gg n^{r-\frac{\ell_1 + \cdots + \ell_r - r}{\ell_1 \cdots \ell_r -1}}$. The fraction in this last exponent reaches its maximum for $r=\ell_1=\ell_2=2$, that is to say  
$
\frac{\ell_1 + \cdots + \ell_r - r}{\ell_1 \cdots \ell_r -1} \le  \frac{2}{3}.
$
Hence  $e_r / n^{r-1} \gg n^{1/3} \to \infty$, and then we have that $\ell_1 -1 = o\left( e_r / n^{r-1}\right) , \; (n \to \infty),$ which implies 
$$
\left( \frac{r! \: e_{r}}{n^{r-1}}-(\ell_1 -1)\right)^{\ell_1} = \left( \frac{r! \: e_{r}}{n^{r-1}}\right)^{\ell_1}(1+o(1)), \quad (n \to \infty).
$$
Using this last equality and \eqref{eq:tbd-5} we can write
$$
\left( \frac{r! \: e_{r}}{n^{r-1}}\right)^{\ell_1}(1+o(1)) \le (r-1)! \: e_{r-1} \: n^{\ell_1-r+1}(1+o(1)), \qquad (n \to \infty), 
$$
and so
\begin{align} 
e_{r} &\le   \frac{n^{r-1}}{r!} \big((r-1)!\big)^{1/\ell_1}\: (e_{r-1})^{1/\ell_1} \: n^{1-\frac{r-1}{\ell_1}} (1+o(1)) \notag \\
				& \le \frac{\big((r-1)!\big)^{1/\ell_1}}{r!}  \: (e_{r-1})^{1/\ell_1}\: n^{r-\frac{r-1}{\ell_1}}(1+o(1)), \quad (n \to \infty). \label{eq:tbd-4}
\end{align}
To prove the induction step assume that \eqref{ubound_Kl..l-free_HGraph} holds for $r-1$, then 
$$
e_{r-1} \le \frac{(\ell_{r}-1)^{1/\ell_2 \cdots \ell_{r-1}}}{(r-1)!} \; n^{r-1 - 1/\ell_2 \cdots \ell_{r-1}}(1+o(1)), \qquad (n \to \infty),
$$
and inserting this into \eqref{eq:tbd-4} we have
\begin{align*}
e_{r} &\le \frac{((r-1)!)^{1/\ell_{1}}}{r!}\: \left(\frac{(\ell_{r}-1)^{1/\ell_2 \cdots \ell_{r-1}}}{(r-1)!} \; n^{r-1 - 1/\ell_2 \cdots \ell_{r-1}}(1+o(1))\right)^{1/\ell_{1}} \: n^{r-\frac{r-1}{\ell_{1}}}\\
        &\le \frac{(\ell_r-1)^{1/\ell_1 \cdots \ell_{r-1}}}{r!} \: n^{r - 1/\ell_1 \cdots \ell_{r-1}} \: (1+o(1)), \qquad (n \to \infty).
\end{align*}
Hence \eqref{ubound_Kl..l-free_HGraph} also holds for $r$ and the proof of Theorem \ref{thm:ubound_Kl..l-free_HGraph} is completed.

\section{Infinite $\Ll$-free sequences of integers} \label{sec:Lfree-sequences}
\subsection{Proof of Theorem \ref{thm:liminf_Lfree_seq}}
We part the interval $(0,N^2]$ into the subintervals $I_j=((j-1)N,jN],\; j=1,\dots ,N$ and use the notation $A_j=A\cap I_j$. Note that $A(tN)=\sum_{j \le t} |A_j|, \; (1 \le t \le N).$ We will first estimate the sum
$$S=\sum_{j=1}^{N}\frac{|A_j|}{j^{1-1/(\ell_1\cdots \ell_{r-1})}}.$$
Let $\sigma(x)$ denote the number $\inf_{y>x}A(y)\dfrac{(y\log y)^{1/(\ell_1\cdots \ell_{r-1})}}y$.

On the one hand for any $t$ such that $1\le t\le N$ we have
$$A(tN)\ge \frac{\sigma(N)(tN)^{1-1/(\ell_1\dots \ell_{r-1})}}{(\log(tN))^{1/(\ell_1\dots \ell_{r-1})}}\ge \frac{\sigma(N)(tN)^{1-1/(\ell_1\dots \ell_{r-1})}}{(2\log N)^{1/(\ell_1\dots \ell_{r-1})}}.$$
We use this inequality and summation by parts to get
\begin{eqnarray}\label{S1}S&\ge &\left (1-1/(\ell_1\cdots \ell_{r-1})\right )\int_1^{N}\frac{\sum_{j\le t}|A_j|}{t^{2-1/(\ell_1\cdots \ell_{r-1})}} \ud t \nonumber\\
&\ge &\frac 12\int_1^{N}\frac{A(tN)}{t^{2-1/(\ell_1\cdots \ell_{r-1})}} \ud t \nonumber\\
&\ge &\frac{\sigma(N)N^{1-1/(\ell_1\cdots \ell_{r-1})} }{4(\log N)^{1/(\ell_1\dots \ell_{r-1})}} \int_1^{N}\frac{\ud t}{t}\nonumber\\ &\ge &\frac{\sigma(N)}4\left (N\log N \right )^{1-1/(\ell_1\cdots \ell_{r-1})}.
\end{eqnarray}
On the other hand H\"{o}lder inequality yields
$$S\le \left (\sum_{j=1}^{N}|A_j|^{\ell_1\cdots \ell_{r-1}}\right )^{1/(\ell_1\cdots \ell_{r-1})}\left (\sum_{j=1}^{N}\frac 1j\right )^{1-1/(\ell_1\cdots \ell_{r-1})}.$$
At this point we will need the following result.
\begin{lem}\label{sum}
Let $A\subset \Z$ be $\mathcal L^{(r)}_{\ell_1,\dots,\ell_r}$-free and $A_j=A\cap ((j-1)N,jN],\ j=1,\dots, N$. Then we have
$$\sum_{j\le N}|A_j|^{\ell_1\cdots \ell_{r-1}}\ll N^{\ell_1\cdots \ell_{r-1}-1}.$$
\end{lem}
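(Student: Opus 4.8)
The plan is to control each block $A_j$ by relating it to the finite extremal function $F\big(N,\Ll^{(r)}_{\ell_1,\dots,\ell_r}\big)$ and to a crucial point: the interaction \emph{between} blocks. First I would observe that each $A_j$, after translating $((j-1)N,jN]$ down to $\{1,\dots,N\}$, is itself $\Ll^{(r)}_{\ell_1,\dots,\ell_r}$-free, so Lemma \ref{lem:weak} gives $|A_j|\ll N^{1-1/(\ell_1\cdots\ell_{r-1})}$ for every $j$. If we only used this we would get $\sum_{j\le N}|A_j|^{\ell_1\cdots\ell_{r-1}}\ll N\cdot N^{\ell_1\cdots\ell_{r-1}-1}=N^{\ell_1\cdots\ell_{r-1}}$, which is a factor $N$ too large; so the block bound alone is not enough and the real saving must come from an averaging argument over the $N$ blocks, exactly paralleling how Lemma \ref{lem:over} (the Overlapping Theorem) is used in the proofs of Theorems \ref{thm:main-ubound} and \ref{thm:ubound_Kl..l-free_HGraph}.

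The key step is therefore to run the overlapping argument \emph{globally}, using as the ``difference set'' the collection of block-shifts $\{0,N,2N,\dots,(N-1)N\}$ rather than a short interval. Concretely, I would set $B'=\{0,N,2N,\dots\}$ of size $N$ and, for a suitable ambient interval $X$ of length $O(N^2)$, apply Lemma \ref{lem:over} with exponent $\ell_1$ to $A$ and $B'$: this produces a lower bound $\binom{|A\cap(0,N^2]|\,N/|X|}{\ell_1}$ for the average over $\ell_1$-subsets $\{x_1,\dots,x_{\ell_1}\}\subset B'$ of $|(A+x_1)\cap\cdots\cap(A+x_{\ell_1})|$. By Lemma \ref{lem:TBD-1} each such intersection is $\Ll^{(r-1)}_{\ell_2,\dots,\ell_r}$-free, hence has size $\ll F\big(2N,\Ll^{(r-1)}_{\ell_2,\dots,\ell_r}\big)\ll N^{1-1/(\ell_2\cdots\ell_{r-1})}$ by Lemma \ref{lem:weak}. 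Feeding this back and using convexity (Jensen applied to $t\mapsto t^{\ell_1}$, or a direct Hölder step) to pass from $\big(\sum_j|A_j|\big)^{\ell_1}$-type quantities to $\sum_j|A_j|^{\ell_1}$-type quantities, one iterates down in $r$: the natural induction hypothesis is the statement of the lemma for $r-1$, namely $\sum_j|A'_j|^{\ell_2\cdots\ell_{r-1}}\ll N^{\ell_2\cdots\ell_{r-1}-1}$ for any $\Ll^{(r-1)}$-free set, applied to the intersections $(A+x_1)\cap\cdots\cap(A+x_{\ell_1})$ restricted to each block.

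A cleaner organization, which I would actually adopt, is a direct double-counting of incidences: for each pair consisting of an $\ell_1$-subset $X_1=\{x_1,\dots,x_{\ell_1}\}$ of block-shifts and a block index $j$, count the size of $(A+x_1)\cap\cdots\cap(A+x_{\ell_1})$ inside block $j$; sum over all $\binom{N}{\ell_1}$ choices of $X_1$ and all $N$ blocks. On one side this is at most $N\binom{N}{\ell_1}\cdot O\big(N^{1-1/(\ell_2\cdots\ell_{r-1})}\big)$ by the argument above (using the induction hypothesis on each block to get a bound of shape $\sum_j(\cdots)\ll N^{\ell_2\cdots\ell_{r-1}-1}$, then Hölder); on the other side, for fixed $j$, Lemma \ref{lem:over} bounds the sum over $X_1$ from below by $\binom{N}{\ell_1}$ times roughly $(|A_j|/N)^{\ell_1}$ in a convexified form, giving $\gg \sum_j |A_j|^{\ell_1}/N^{\ell_1-1}$ after multiplying through. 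Comparing the two sides and inserting the induction hypothesis yields $\sum_j|A_j|^{\ell_1\cdots\ell_{r-1}}\ll N^{\ell_1\cdots\ell_{r-1}-1}$, closing the induction; the base case $r=2$ reduces to $\sum_j|A_j|^{\ell_1}\ll N^{\ell_1-1}$, which follows from the same overlapping computation with the trivial bound $F\big(2N,\Ll_{\ell_2}\big)=\ell_2-1$ on the intersections.

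The main obstacle I anticipate is bookkeeping the convexity/Hölder passage correctly so that the power $\ell_1\cdots\ell_{r-1}$ (not merely $\ell_1$) appears on $|A_j|$: one must be careful that the overlapping inequality applied blockwise yields the $\ell_1$-th power of $|A_j|$, and that the induction hypothesis supplies the remaining factor $\ell_2\cdots\ell_{r-1}$ in the exponent in a way compatible with summing over $j$ rather than bounding each $j$ separately. Getting the interplay between the two summations — over $\ell_1$-subsets of block-shifts and over blocks — to line up so that the $\binom{N}{\ell_1}$ factors cancel and exactly one power of $N$ is saved is the delicate part; everything else is the now-familiar Kővári–Sós–Turán-style estimation already carried out in the proofs of Lemmas \ref{lem:weak} and \ref{lem:strong}.
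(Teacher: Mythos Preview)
Your overall architecture (induction on $r$; overlapping lemma to produce an $\ell_1$-th power; H\"older to pass to the $(\ell_2\cdots\ell_{r-1})$-th power; Lemma~\ref{lem:TBD-1} to drop from $\Ll^{(r)}$ to $\Ll^{(r-1)}$) matches the paper's. The concrete mechanism you propose, however, does not work: the shift set $B'=\{0,N,2N,\dots,(N-1)N\}$ is the wrong choice. With block-shifts, translates $A_j+kN$ for distinct $k$ land in \emph{disjoint} intervals $I_{j+k}$, so $(A_j+k_1N)\cap\cdots\cap(A_j+k_{\ell_1}N)=\emptyset$ and Lemma~\ref{lem:over} applied to $A_j$ is vacuous. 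If instead you intersect translates of the \emph{global} set $A$ and then restrict to $I_j$, the overlapping bound involves $|A|=\sum_j|A_j|$ (or, for fixed $j$, the quantity $\sum_{j'}|A_{j'}|$ over a window of blocks), never the single $|A_j|$; so the lower side produces $(\sum_j|A_j|)^{\ell_1}$-type terms rather than $\sum_j|A_j|^{\ell_1}$, and no amount of H\"older fixes that direction.

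The paper uses \emph{small} shifts: take $B=[N]$ and apply Lemma~\ref{lem:over} to each $A_j$ separately (with ambient interval of length $2N$), obtaining
\[
\sum_{x\in\binom{[N]}{\ell_1}}|A_j*x|\;\gg\; N\,|A_j|^{\ell_1},\qquad A_j*x:=\bigcap_{i}(A_j+x_i).
\]
H\"older then gives $N^{\ell_2\cdots\ell_{r-1}}|A_j|^{\ell_1\cdots\ell_{r-1}}\ll\big(\sum_x|A_j*x|^{\ell_2\cdots\ell_{r-1}}\big)\,N^{\ell_1\cdots\ell_{r-1}-\ell_1}$. Summing in $j$ and swapping the order of summation, one fixes $x$: the full intersection $A*x=\bigcap_i(A+x_i)$ is $\Ll^{(r-1)}_{\ell_2,\dots,\ell_r}$-free by Lemma~\ref{lem:TBD-1}, and the $A_j*x\subset A*x$ lie in consecutive length-$N$ intervals, so the induction hypothesis yields $\sum_j|A_j*x|^{\ell_2\cdots\ell_{r-1}}\ll N^{\ell_2\cdots\ell_{r-1}-1}$. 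Summing over the $\binom{N}{\ell_1}$ choices of $x$ closes the induction. The base case $r=2$ is a direct count: $\sum_j\binom{|A_j|}{\ell_1}$ is at most the number of $\ell_1$-subsets of $A$ of diameter $<N$, and there are only $\binom{N-1}{\ell_1-1}$ translation classes of such subsets, each represented at most $\ell_2-1$ times because $A$ is $\Ll_{\ell_1,\ell_2}$-free. Your instinct that ``exactly one power of $N$ is saved'' is right; it is the choice $B=[N]$ applied blockwise that delivers it.
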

Assuming Lemma \ref{sum} holds (we will prove it below) we can write
\begin{equation}\label{S2}S\ll N^{1-1/(\ell_1\cdots \ell_{r-1})}(\log N)^{1-1/(\ell_1\cdots \ell_{r-1})}.\end{equation}
Inequalities \eqref{S1} and \eqref{S2} imply $\sigma(N)\ll 1$  that is precisely the claim of Theorem \ref{thm:liminf_Lfree_seq}.
\begin{proof}[Proof of Lemma \ref{sum}] We use induction on $r$. We will call $\ell_1$-subsets to the subsets of $\ell_1$ elements.

When $r=2$ then  $\sum_{j\le N}\binom{|A_j|}{\ell_1}$ counts the $\ell_1$-subsets in $A$ included in some of the intervals $I_j=((j-1)N,jN],\; j=1,\dots ,N.$ 
We will estimate this number in two steps. On the one hand there are $\binom{N-1}{\ell_1-1}$ classes of pairwise congruent $\ell_1$-subsets of $A$ with diameter less than $N$. The reason is that each of these classes contains a representative subset that is within $[1,N]$ and which contains $1$, note that the remaining elements of the representative subset can be chosen in $\binom{N-1}{\ell_1-1}$ different ways.  On the other hand it is easy to see that since $A$ is $\Ll_{\ell_1,\ell_2}$-free then necessarily every class of pairwise congruent $\ell_1$-subsets contains at most $\ell_2$ members. Hence we have
$$ \sum_{j\le N} |A_j|^{\ell_1} \ll \sum_{j\le N}\binom{|A_j|}{\ell_1} \le \ell_2 \binom{N-1}{\ell_1-1} \ll N^{\ell_1-1}.  $$

When $r \ge 3$ assume that Lemma \ref{sum} is true for $r-1$.
For any set $S$ and any collection $x=\{x_1,\dots,x_{\ell_1}\}\in \binom N{\ell_1}$ we will use the notation $S*x=\bigcap_{i=1}^{\ell_1}(S+x_i).$

On the one hand H\"{o}lder inequality yields
\begin{equation}\label{e1}\sum_{x\in \binom N{l_{1}}}|A_j*x|\le \left (\sum_{x\in \binom N{l_{1}}}|A_j*x|^{\ell_2\cdots \ell_{r-1}}\right )^{1/(\ell_2\cdots \ell_{r-1})}
\binom N{\ell_1}^{1-1/(\ell_2\cdots \ell_{r-1})}.
\end{equation}
On the other hand Lemma \ref{lem:over} with $X=[2N]$ and $B=[N]$ implies
\begin{equation}\label{e2}\sum_{x\in \binom N{\ell_1}}|A_j*x|\ge 2N\binom{|A_j|/2}{\ell_1}\gg N|A_j|^{\ell_1}.\end{equation}
Combining \eqref{e1} and \eqref{e2} we obtain
$$N^{\ell_2\cdots \ell_{r-1}} |A_j|^{\ell_1\cdots \ell_{r-1}}   \ll \left (\sum_{x\in \binom N{\ell_1}}|A_j * x|^{\ell_2\cdots \ell_{r-1}}\right )
N^{\ell_1\cdots \ell_{r-1}-\ell_1} $$
Summing in $j$ we can write:
$$N^{\ell_2\cdots \ell_{r-1}}\sum_{j \le N} |A_j|^{\ell_1\cdots \ell_{r-1}}   \ll N^{\ell_1\cdots \ell_{r-1}-\ell_1}\sum_{x\in \binom N{\ell_1}}\sum_{j \le N}|A_j*x|^{\ell_2\cdots \ell_{r-1}}
$$
Observe that $A_j$ is $\Ll_{\ell_1,\ldots,\ell_r}^{(r)}$-free (because $A_j \subset A$) and then Lemma \ref{lem:TBD-1} implies that for any $x=\{x_1,\dots,x_{\ell_1}\}\in \binom N{\ell_1}$ the set $A_j*x=\bigcap_{i=1}^{\ell_1}(A_j+x_i)$ is $\Ll^{(r-1)}_{\ell_2,\dots,\ell_r}$-free.
Hence we apply the induction hypothesis to each $A_j*x$ to obtain
$$N^{\ell_2\cdots \ell_{r-1}}\sum_{j \le N} |A_j|^{\ell_1\cdots \ell_{r-1}}   \ll N^{\ell_1\cdots \ell_{r-1}-\ell_1}\sum_{x\in \binom N{\ell_1}}N^{\ell_2\cdots \ell_{r-1}-1}
$$
Thus we have
$$\sum_{j \le N} |A_j|^{\ell_1\cdots \ell_{r-1}}   \ll N^{\ell_1\cdots \ell_{r-1}-1},
$$
as claimed.
\end{proof}
\subsection{Proofs of Theorems \ref{thm:2L-free_seq} and \ref{thm:L2...2-free_seq} }
The strategy of the proof is the same for the two cases of Hilbert cubes and $\Ll_{2,\ell}$. 
We first construct a dense random sequence $S$ free of arithmetic progressions. 
We will say that $X$ is an \emph{obstruction} (for $S$) when $X \subset S$ is a sumset of the class $\Ll_{\ell_1,\ldots,\ell_r} ^{(r)}.$
The sequence $S$ is likely to have infinitely many obstructions. If we could proof that obstructions are few then we would be able to remove all of them by just removing few elements from $S$. After the removal process we would retain a subsequence $A \subset S$ satisfying the conditions of Theorem \ref{thm:2L-free_seq} (resp. Theorem \ref{thm:L2...2-free_seq}).
Thus we have to estimate the number of  obstructions for $S$.  In the cases of Hilbert cubes and $\Ll_{2,\ell}$ we have succeeded to obtain an upper bound which allows to complete the proofs of Theorems \ref{thm:L2...2-free_seq} and \ref{thm:2L-free_seq}. 

Our first remark is that we can take $\epsilon$ as little as needed in the sense that if Theorem \ref{thm:2L-free_seq} is true for a particular $\epsilon_0 > 0$ then it is also true for any $\epsilon > \epsilon_0$. 

We define a collection of  intervals as follows: $$I_m=[4^{m+2},4^{m+2}+4^m), \quad (m \ge 1).$$ 
Let $B_m$  denote the set given by Theorem \ref{Behrend} of Behrend 
 with the following properties: $B_m\subset I_m$, $B_m$ is free of arithmetic progressions and has size $|B_m|\ge 4^{m(1+o(1))}.$ 

Given $\epsilon >0$ we have $|B_m|\ge 4^{m(1-\epsilon/2)}, \, (m \ge m_\epsilon),$ for some positive integer $m_\epsilon.$
We take $$B=\bigcup_{m \ge m_\epsilon} B_m.$$
Let $r, \ell_1,\ldots,\ell_r$ be integers with $r\ge 2$ and $2\le \ell_1\le \cdots \le \ell_r$.
We consider the probability space of all random infinite sequences $S$ of positive integers with law $$\mathbb P(\nu\in S)=\begin{cases} f(\nu) \text{ if } \nu\in B,\\ 0 \text{ otherwise},\end{cases}$$ where all the events $\{\nu\in S\}_{\nu \ge 1}$ are mutually independent and 
$$f(\nu)=\nu^{-\alpha} ,\qquad  \alpha=\frac{\ell_1+\cdots+\ell_r-r}{\ell_1 \cdots \ell_r-1}+\epsilon/2.$$
We will write $S_m$ for the intersection $S\cap B_m$.
\begin{lem} \label{lem:tbd-2}
Any random sequence $S$ defined as above is free of arithmetic progressions and with high probability we have
\begin{equation} \label{lbound:|S_m|}
|S_m| \gg 4^{m\left(1-\frac{\ell_1+\cdots+\ell_r-r}{\ell_1 \cdots \ell_r-1} - \epsilon \right)}, \quad (m \to \infty).
\end{equation}
\end{lem}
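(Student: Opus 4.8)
The plan is to prove the two assertions of the lemma separately, both by elementary first- and second-moment arguments once the geometry of the blocks is pinned down. For the arithmetic-progression freeness I would start from $S\subset B=\bigcup_{m\ge m_\epsilon}B_m$, where each $B_m\subset I_m=[4^{m+2},4^{m+2}+4^m)$ is itself free of arithmetic progressions by Theorem \ref{Behrend}. What remains is to exclude a three-term progression $a<a+d<a+2d$ whose terms lie in two or three distinct blocks. I would argue directly from the endpoints of the $I_m$: the estimates $4^{m+2}+4^m<2\cdot 4^{m+2}<4^{m+3}$ say that consecutive intervals are separated by a gap much larger than the length of either one, so in each of the remaining cases (the two larger terms in a common block, or all three terms in distinct blocks) a contradiction emerges by comparing $d$, or $a+2d$, with the lengths and positions of the intervals. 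Hence $B$, and therefore $S$, contains no arithmetic progression.

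For the lower bound on $|S_m|$ I would write $|S_m|=\sum_{\nu\in B_m}\mathbf 1_{\{\nu\in S\}}$, a sum of independent Bernoulli variables, and compute $\Esp{|S_m|}=\sum_{\nu\in B_m}\nu^{-\alpha}$. Every $\nu\in B_m$ satisfies $4^{m+2}\le\nu<2\cdot 4^{m+2}$, so $\nu^{-\alpha}\asymp 4^{-m\alpha}$ with constants depending only on $\alpha$, and combining this with $|B_m|\ge 4^{m(1-\epsilon/2)}$ (valid for $m\ge m_\epsilon$) gives
\[
\Esp{|S_m|}\gg 4^{m(1-\epsilon/2)}\,4^{-m\alpha}=4^{\,m\left(1-\frac{\ell_1+\cdots+\ell_r-r}{\ell_1\cdots\ell_r-1}-\epsilon\right)},
\]
since $\alpha=\frac{\ell_1+\cdots+\ell_r-r}{\ell_1\cdots\ell_r-1}+\epsilon/2$. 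By the reduction noted at the start of the subsection I may assume $\epsilon$ is small enough that $\delta:=1-\frac{\ell_1+\cdots+\ell_r-r}{\ell_1\cdots\ell_r-1}-\epsilon>0$, which is possible because $\ell_1\cdots\ell_r-1>\ell_1+\cdots+\ell_r-r$ whenever all $\ell_i\ge2$; in particular $\Esp{|S_m|}\gg 4^{m\delta}\to\infty$.

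Finally I would upgrade the bound on the mean to an almost sure bound holding for every large $m$. Since $\mathrm{Var}(|S_m|)=\sum_{\nu\in B_m}\nu^{-\alpha}(1-\nu^{-\alpha})\le\Esp{|S_m|}$, Chebyshev's inequality yields
\[
\Pr{|S_m|<\tfrac12\Esp{|S_m|}}\le\frac{4\,\mathrm{Var}(|S_m|)}{(\Esp{|S_m|})^2}\le\frac{4}{\Esp{|S_m|}}\ll 4^{-m\delta},
\]
which is summable in $m$; by the Borel--Cantelli lemma, almost surely $|S_m|\ge\tfrac12\Esp{|S_m|}$ for all sufficiently large $m$, which is precisely \eqref{lbound:|S_m|}. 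I expect the only slightly delicate point to be the case analysis in the first paragraph, checking that the geometric spacing of the intervals $I_m$ really does rule out progressions straddling two blocks; the rest is the standard moment computation already used in the proof of Lemma \ref{lem:atleast1/4}.
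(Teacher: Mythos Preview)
Your proposal is correct and follows essentially the same line as the paper's proof. Two minor remarks: (i) your list of ``remaining cases'' for the AP-freeness argument omits the configuration where the two \emph{smaller} terms lie in a common block $B_{m}$ and the largest term lies in some $B_{m'}$ with $m'>m$; the paper handles this separately by observing that then $x_3-x_2\ge 4^{m+3}-(4^{m+2}+4^m)=47\cdot 4^m$, which exceeds the diameter $4^m$ of $I_m$ and hence $x_2-x_1$. (ii) For the concentration step the paper invokes Chernoff's inequality for sums of independent Bernoulli variables, obtaining $\Pr{|S_m|<\mu_m/2}<e^{-\mu_m/2}$; your Chebyshev bound $\ll 4^{-m\delta}$ is weaker but still summable in $m$, so Borel--Cantelli applies just as well and nothing is lost.
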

\begin{proof}As $S \subset B,$ it suffices to proof that the set $B$  does not contain arithmetic progressions.
Take any $x_1 < x_2 < x_3$ with $x_1\in B_{m_1},\ x_2\in B_{m_2},\ x_3\in B_{m_3}$ and $m_1\le m_2 \le m_3$. 
If $m_1=m_2=m_3$ then $x_1, x_2, x_3$ are not in arithmetic progression because $B_{m_1}$ is free of them. 

If $m_1<m_2<m_3$ and $2x_2=x_1+x_3$ then we would have
$$34 \cdot 4^{m_2} = 2(4^{m_2+2}+4^{m_2}) \ge 2 x_2 = x_1+x_3 \ge 4^{m_3+2} \ge 4^{m_2+3}=64 \cdot 4^{m_2},$$
which is false.
If $m_1<m_2=m_3$ then $$x_2-x_1 \ge 4^{m_2+2}-(4^{m_2+1}+4^{m_2-1}) = \frac{47}{4} 4^{m_2} > x_3-x_2,$$ 
and then $x_1, x_2, x_3$ are not in arithmetic progression.
If $m_1=m_2<m_3$ then  $$x_3-x_2 \ge 4^{m_2+3}-(4^{m_2+2}+4^{m_2}) = 47 \cdot 4^{m_2}  >  x_2-x_1,$$
and then $x_1, x_2, x_3$ are not in arithmetic progression. 

When $\nu \in B_m$ then $\nu < 4^{m+3}$  and so the expected size of $S_m=S\cap B_m$ is
$$\mu_m=\mathbb E(|S_m|)=\sum_{\nu\in B_m}\nu^{-\alpha}\ge |B_m|4^{-(m+3)\alpha} \gg  4^{m(1-\epsilon/2- \alpha)}, \quad (m \to \infty).$$ 
Since $|S_m|$ is a sum of mutually independent indicator random variables we can apply Chernoff inequality to obtain
$$\mathbb P(|S_m|<\mu_m/2)< e^{-\mu_m/2} \ll e^{- 4^{m(1-\epsilon/2-\alpha)}}.$$
This inequality implies $\sum_{m \ge 3} \mathbb P(|S_m|<\mu_m/2)<\infty$, and then by the Borell-Cantelli Lemma we have that with high probability  
\begin{equation*} 
|S_m|\ge \mu_m/2 \gg 4^{m(1-\epsilon/2-\alpha)} = 4^{m(1-\frac{\ell_1+\cdots+\ell_r-r}{\ell_1 \cdots \ell_r-1}-\epsilon)}, \quad (m \to \infty).
\end{equation*}

\end{proof}
We want to prune the sequence $S$ in order to destroy all obstructions. To this end we will remove from every obstruction its greatest element. Let  $S^{\mathrm{bad}}_m$ denote the collection of all elements in $S_m$ that have the property to be the greatest element in at least one obstruction (to $S$):
$$S^{\mathrm{bad}}_m := \left \lbrace s \in S_m \mid s=\max(X),\ X \subset S \text{ is an obstruction} \right \rbrace.$$ We also define a random variable that counts the number of obstructions that have their maximum in $S_m$:
$$
N(S_m) := \left \lbrace  X \subset S \text{ is an obstruction } \mid \max(X) \in S_m \right \rbrace.
$$
For two particular cases we claim that obstructions with a maximum in $S_m$ are few.
\begin{lem} \label{lem:obstructions-are-few} For the two cases  $\Ll_{2,\ell}$ and $\Ll_{2,\ldots,2}^{(r)}$ we have with high probability
\begin{equation} \label{eq:sumsets-are-few}
|N(S_m)| = o(|S_m|),  \quad (m \to \infty).
\end{equation}
\end{lem}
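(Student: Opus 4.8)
The plan is to bound the expectation of $N(S_m)$ by a first-moment argument, using the structure of obstructions coming from Lemma \ref{lem:sumset-as-multiset} (in fact just the crude count of Lemma \ref{countsumset}, or a more refined count tailored to the two special cases), and then to show that this expectation is $o(\mu_m)$ where $\mu_m=\E(|S_m|)\gg 4^{m(1-\alpha-\epsilon/2)}$, so that Markov's inequality plus Borel--Cantelli give \eqref{eq:sumsets-are-few} with high probability. The key point is that an obstruction $X$ with $\max(X)\in S_m$ is a sumset $L_1+\dots+L_r$ of the prescribed shape, all of whose elements lie in $S\subset B$; since $B$ is free of arithmetic progressions (Lemma \ref{lem:tbd-2}) and $X$ is a sumset, Lemma \ref{lem:degen-sumset-contains-AP} forces $X$ to be non-degenerated, so $|X|=\ell_1\cdots\ell_r$ and $\Pr{X\subset S}=\prod_{x\in X} f(x)$, which is essentially $\asymp (4^{m})^{-\alpha \ell_1\cdots\ell_r}$ once we know all elements of $X$ are of size $\asymp 4^{m'}$ for the relevant blocks.

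First I would set up the first moment: $\E(|N(S_m)|)\le \sum_X \Pr{X\subset S}$, the sum over all obstructions $X\subset B$ with $\max(X)\in B_m$. I would split obstructions according to which blocks $B_{m_1},\dots$ their summands meet. For the two special cases this is manageable: in the Hilbert-cube case $\Ll^{(r)}_{2,\dots,2}$ an obstruction is $\{a+\sum_{i\in T}d_i : T\subseteq\{1,\dots,r\}\}$ with the $2^r$ vertices determined by the "anchor" $a$ and the $r$ "edge vectors" $d_1,\dots,d_r$; in the $\Ll_{2,\ell}$ case an obstruction $L_1+L_2$ with $|L_1|=2,|L_2|=\ell$ is determined by the one difference of $L_1$ together with $L_2$. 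In each case the number of obstructions with prescribed maximum $\le 4^{m+3}$ is at most (number of free parameters) each ranging over an interval of length $\ll 4^{m+3}$: roughly $\ll 4^{m(r+1)}$ in the Hilbert case (anchor plus $r$ differences) and $\ll 4^{m\ell}$ in the $\Ll_{2,\ell}$ case (one difference plus the $\ell-1$ remaining elements of $L_2$, having fixed $\max$). Multiplying by the probability $\ll 4^{-m\alpha\cdot 2^r}$ (resp.\ $4^{-m\alpha\cdot 2\ell}$) and using $\alpha=\tfrac{\ell_1+\dots+\ell_r-r}{\ell_1\cdots\ell_r-1}+\epsilon/2$, i.e.\ $\alpha=\tfrac{r}{2^r-1}+\epsilon/2$ for Hilbert cubes and $\alpha=\tfrac{\ell}{2\ell-1}+\epsilon/2$ for $\Ll_{2,\ell}$, one checks that the product is $\ll 4^{m(1-\alpha-\epsilon/2)\cdot(1-c\epsilon)}$ for some $c>0$, i.e.\ $=o(\mu_m)$. (The exponent arithmetic is exactly the same as in the proof of Lemma \ref{lem:atleast1/4}: the count of obstructions times $f^{|X|}$ is designed so that the "$-1$" in $\alpha$'s denominator, coming from the number of independent parameters being one less than $|X|$, is what saves a power of $4^m$; the extra $\epsilon/2$ in $\alpha$ then provides the room to beat $\mu_m$.)

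Then I would finish by Markov: $\Pr{|N(S_m)|> \mu_m/(2\,g(m))}\le 2 g(m)\,\E(|N(S_m)|)/\mu_m$ for a slowly growing $g(m)$ (say $g(m)=m$), and since $\E(|N(S_m)|)/\mu_m\ll 4^{-c\epsilon m/2}$ this is summable in $m$; Borel--Cantelli then gives $|N(S_m)|\le \mu_m/(2m)$ for all large $m$ almost surely, while Lemma \ref{lem:tbd-2} gives $|S_m|\ge\mu_m/2$ for all large $m$ almost surely, so $|N(S_m)|=o(|S_m|)$ with high probability, as claimed. The main obstacle I expect is the careful bookkeeping in counting obstructions with a prescribed maximum in $B_m$: one must make sure that when a summand of an obstruction straddles several blocks $B_{m'}$ the probability gain from the large elements more than compensates for the extra parameter ranges, and that the case where all summands lie in a single block $B_m$ (the dominant case) really does give the stated exponent; handling the "degenerate-looking" multi-block configurations cleanly, rather than just the single-block case, is where the real work lies.
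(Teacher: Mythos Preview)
Your overall strategy---first-moment bound on $\mathbb{E}(N(S_m))$, then Markov plus Borel--Cantelli, combined with the lower bound on $|S_m|$ from Lemma~\ref{lem:tbd-2}---is exactly what the paper does, and your use of AP-freeness to force non-degeneracy (hence $\Pr{X\subset S}=\prod_{x\in X}f(x)$) is correct. The gap is in the bound on $\mathbb{E}(N(S_m))$ itself. Your plan is to split obstructions by the blocks $B_{m'}$ their elements meet, handle the single-block case by the ``count $\times$ uniform probability'' arithmetic of Lemma~\ref{lem:atleast1/4}, and then argue that multi-block configurations are dominated---but you leave this last step open, and it is not a formality: the estimate $\Pr{X\subset S}\ll 4^{-m\alpha|X|}$ is simply false when some elements of $X$ lie in much lower blocks (where $f(\nu)=\nu^{-\alpha}$ is much larger), so a genuine case analysis over block-profiles would be needed to salvage your route.

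The paper avoids block-splitting altogether by a different device. It bounds $f(\nu)\le\nu^{-\alpha}$ for all $\nu$ and then uses the monotonicity of $f$ to collapse $\prod_{x\in X}f(x)$ into a product over a short list of ordered ``anchor'' variables. In the Hilbert-cube case, writing $X=x_0+\{0,x_1\}+\cdots+\{0,x_r\}$ with $x_1\le\cdots\le x_r$ and $y_i=x_0+x_i$, one has $f(x_0+\sum_{i\in I}x_i)\le f(y_{\max I})$, so $\Pr{X\subset S}\le y_0^{-\alpha}\prod_{i=1}^r y_i^{-2^{i-1}\alpha}$; the expectation is then a nested sum $\sum_{y_0\le\cdots\le y_r\le 4^{m+3}}$ that telescopes via the elementary estimates $\sum_{y\le z}y^\beta\asymp z^{1+\beta}$ (valid since each running exponent $t-2^t\alpha>-1$ for small $\epsilon$), yielding $\mathbb{E}(N(S_m))\ll 4^{m(1+r-2^r\alpha)}$. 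The $\Ll_{2,\ell}$ case is handled the same way after splitting into $O(\ell)$ ``types'' according to where the difference $x$ sits among $y_1,\ldots,y_\ell$. This reorganization---replacing block bookkeeping by a single weighted sum over ordered tuples---is the key idea your proposal is missing.
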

We postpone the proof of Lemma \ref{lem:obstructions-are-few} until the end of this section.

Now we can end the proof of Theorems \ref{thm:2L-free_seq}  and  \ref{thm:L2...2-free_seq} as follows.
Take the randomly constructed sequence $S$. For every obstruction $X \subset S$ we have that $\max(X) \in S_m$ for some $m$. We remove $\max(X)$ from the set $S_m$. We perform this removal process for all the obstructions for $S$. Let $S_m^*$ denote the subset of $S_m$ that is retained after the completion of this removal process. Two different obstructions might have the same maximum therefore $N(S_m) \ge |S^{\mathrm{bad}}_m|$. Thus by Lemma \ref{lem:tbd-2} and Lemma \ref{lem:obstructions-are-few}, with high probability we have that the retained elements are at least
\begin{align*}
|S_m^*|=|S_m \setminus S^{\mathrm{bad}}_m| \ge |S_m|-|N(S_m)|&\gg |S_m| \gg 4^{m(1-\frac{\ell_1+\cdots+\ell_r-r}{\ell_1 \cdots \ell_r-1}-\epsilon)}, \quad (m \to \infty),
\end{align*} 
for the two cases $\Ll_{2,\ell}$ and $\Ll_{2,\ldots,2}^{(r)}$.

Finally let us take $A = \bigcup_{m\ge m_\epsilon} S_m^*$. On the one hand $A$ is $\Ll_{\ell_1,\ldots,\ell_r}^{(r)}$-free because all sumsets of the class $\Ll_{\ell_1,\ldots,\ell_r}^{(r)}$ that were contained in the initial sequence $S$ have been destroyed in the process of obtaining the subsequence $A \subset S$.

On the other hand for each $x$ large enough take the integer $k$ such that $4^k<x\le 4^{k+1}$. It is clear that 
$$A(x)\ge \sum_{m\le k}|S_m^*| \ge |S_k^*|\gg 4^{k(1-\frac{\ell_1+\cdots+\ell_r-r}{\ell_1 \cdots \ell_r-1}-\epsilon)}\gg x^{1-\frac{\ell_1+\cdots+\ell_r-r}{\ell_1 \cdots \ell_r-1}-\epsilon}, \quad (x \to \infty),$$
holds, with high probability,  for the two cases $\Ll_{2,\ell}$  and $\Ll_{2,\ldots,2}^{(r)}$. Therefore at least one sequence must exist satisfying Theorem \ref{thm:2L-free_seq}. The same applies for Theorem \ref{thm:L2...2-free_seq}.

\

In the proof of Lemma \ref{lem:obstructions-are-few} we will use the following well known estimates:
\begin{equation} \label{eq:sum-1} 
\sum_{n\le x} n^{\beta}\asymp x^{1+\beta} \quad \text{ if } \beta >-1.
\end{equation}
\begin{equation} \label{eq:sum-2}
\sum_{x\le n} n^{\beta}\asymp x^{1+\beta} \quad \text{ if } \beta <-1.
\end{equation}
%
%
\subsubsection{Proof of Lemma \ref{lem:obstructions-are-few} for the case $\Ll_{2,\ell}$} Any sumset $X$ in $\Ll_{2,\ell}$ can be described as follows:
$$X = \{0,x\} + \{y_1, \ldots, y_\ell \}, \qquad y_1 \le \cdots \le y_\ell.$$ 
%
%
For any fixed choice of $x,y_1, \ldots, y_\ell$ either (a) there exists a $t$ with  $2 \le t \le  \ell$ such that 
\begin{equation} \label{eq:TBC2}
y_1 \le \cdots \le y_{t-1} \le x \le y_t \le \cdots \le y_\ell,
\end{equation}
or alternatively (b) we have  either  $x \le y_1 \le \cdots \le y_\ell$  (type \lq \lq left\rq \rq) or $ y_1 \le \cdots \le y_\ell \le x$  (type \lq \lq right\rq \rq).
For convenience we will say that $X$ is \lq\lq of type $t$\rq \rq \  when $X = \{0,x\} + \{y_1, \ldots, y_\ell \}$ and \eqref{eq:TBC2} holds.

Suppose that a sumset $X$ of the class $\Ll_{2,\ell}$ is contained in $S$. By Lemma \ref{lem:tbd-2}, 
$S$ does not contain arithmetic progressions, hence $X$ is also free of them. Then Lemma \ref{lem:degen-sumset-contains-AP} implies that $X$ cannot be degenerated, that is to say all the possible sums that contribute to the sumset $X$ are distinct (see Definition \ref{degenerated-sumset}). Hence
$$P(X \subset S) = f(y_1)\cdots f(y_\ell)f(x+y_1)\cdots f(x+y_\ell).$$
We will estimate first the expected number of obstructions $X$ (to $S$) such that $\max(X) \in S_m$ for the cases left and right. 

Let $N_m^L$ denote the number of number of such obstructions that satisfy $x \le y_1 \le \cdots \le y_\ell$. 
The function $f(\nu)=\nu^{-\alpha}$, where $\alpha=\frac{l}{2l-1}+\epsilon/2
$, is non-increasing so $f(x+y_i) \le f(y_i)$, and we can write
$$
P(X \subset S) = f(y_1)\cdots f(y_\ell)f(x+y_1)\cdots f(x+y_\ell) \le f(y_1)^2\cdots f(y_\ell)^2.
$$
Note also that if $x+y_\ell=\max(X) \in S_m \subset [4^{m+2},4^{m+2}+4^m)$ then $y_\ell \le 4^{m+3}.$
Hence we can write
\begin{align*}
\Esp{N_m^L} &= \sum_{\substack{X \text{ of type left end}  \\ \max(X) \in S_m}} \Pr{ X \subset S} \\ 
&\le \sum_{x \le y_1 \le \cdots y_\ell \le 4^{m+3}}  f(y_1)^2\cdots f(y_\ell)^2 = \sum_{x \le y_1 \le \cdots y_\ell \le 4^{m+3}}  y_{1}^{-2\alpha} \cdots y_{\ell}^{-2\alpha} \\
& \le \sum_{x \le 4^{m+3}} \left( \sum_{ x \le y} y^{-2\alpha}\right)^{\ell} \ll \sum_{x \le 4^{m+3}}  x^{(1-2\alpha) \ell} \ll 4^{m(1+(1-2\alpha)\ell)}, 
\end{align*}
where, since $2\alpha=\frac{2l}{2l-1}+\epsilon >1$ and $2\alpha-1=\frac 1{2l-1}+\epsilon<1$ for $\epsilon$ small enough, we have used the estimates \eqref{eq:sum-1} and \eqref{eq:sum-2}. 

Let $N_m^R$ denote the number number of obstructions $X$ (to $S$) such that $\max(X) \in S_m$, with $y_1 \le \cdots \le y_\ell \le x$. Again by the monotony of $f$ we have
$$
P(X \subset S) = f(y_1)\cdots f(y_\ell)f(x+y_1)\cdots f(x+y_\ell) \le f(y_1)\cdots f(y_\ell)f(x)^\ell,
$$
and then
\begin{align*}
\Esp{N_m^R} &= \sum_{\substack{X \text{ of type right end}  \\ \max(X) \in S_m}} \Pr{ X \subset S} \\ 
&\le \sum_{y_1 \le \cdots y_\ell \le x \le 4^{m+3}}  f(y_1) \cdots f(y_\ell) f(x)^\ell = \sum_{y_1 \le \cdots y_\ell \le x \le 4^{m+3}}  y_{1}^{-\alpha} \cdots y_{\ell}^{-\alpha} x^{-\alpha \ell} \\
& \le \sum_{x \le 4^{m+3}} \left( \sum_{ y \le x} y^{-\alpha}\right)^{\ell} x^{-\alpha \ell} \ll \sum_{x \le 4^{m+3}}  x^{(1-2\alpha) \ell} \ll 4^{m(1+(1-2\alpha)\ell)}, 
\end{align*}
where we have taken $\epsilon$ sufficiently small to have $ \alpha < 1$.

Fix $t$ with $2 \le t \le \ell$. 
By \eqref{eq:TBC2} and the monotony of the function $f$ we have:
$$ f(x+y_i) \le f(x), \; (1\le i \le t-1), \qquad  f(x+y_i) \le f(y_i) , \; (t\le i \le \ell).$$ 
Then we can write
\begin{align*}
\Pr{X \subset S}	&\le f(y_1)\cdots f(y_\ell) f(x)^{t-1} f(y_t)\cdots f(y_\ell) \\
								& =f(y_1)\cdots f(y_{t-1}) f(x)^{t-1} f(y_t)^2\cdots f(y_\ell)^2.
\end{align*}
Let $N_m^{(t)}=N_m^{(t)}(S)$ denote the number of sumsets $X$ of type $t$ such that $X\subset S$ and  $\max(X) \in S_m$.  
The expected value of $N_m^{(t)}$ can be estimated as follows 
\begin{align*}
\Esp{N_m^{(t)}} &= \sum_{\substack{X \text{ of type } t \\ \max(X) \in S_m}} \Pr{ X \subset S} \\ 
&\le \sum_{\substack{ y_1 \le \cdots \le y_{t-1} \le x \\ x \le y_t \le \cdots y_\ell \le 4^{m+3}}} f(y_1)\cdots f(y_{t-1}) f(x)^{t-1} f(y_t)^2\cdots f(y_\ell)^2 \\
&\le \sum_{\substack{ y_1, \ldots, y_{t-1} \le x \\ x \le y_t \le \cdots y_\ell \le 4^{m+3}}} y_1^{-\alpha} \cdots y_{t-1}^{-\alpha} x^{-(t-1)\alpha} y_{t}^{-2\alpha} \cdots y_{\ell}^{-2\alpha} \\
& \le \sum_{x \le y_t \le \cdots y_\ell \le 4^{m+3}} \left( \sum_{ y \le x} y^{-\alpha}\right)^{t-1} x^{-(t-1)\alpha} y_{t}^{-2\alpha} \cdots y_{\ell}^{-2\alpha}\\ 
& \le  \sum_{x\le 4^{m+3}}x^{-(t-1)\alpha} \left( \sum_{x \le y } y^{-2\alpha}\right)^{\ell-t+1} \left( \sum_{ y \le x} y^{-\alpha}\right)^{t-1}\\ 
\tiny{
(\text{since }  \tfrac 12 < \alpha < 1 \text{ for } \epsilon \text{ small})
}
& \le  \sum_{x\le 4^{m+3}}x^{-(t-1)\alpha}x^{(1-2\alpha)(\ell-t+1)} x^{(1-\alpha)(t-1)}\\  & \le  \sum_{x\le 4^{m+3}}x^{-(2\alpha-1)\ell}\ll 4^{m(1-(2\alpha -1)\ell)},
\end{align*}
since $(2\alpha-1)\ell=\frac{\ell}{2\ell-1}+\epsilon \ell<1$ for $\epsilon$ small enough.
Observe that $N(S_m)=N_m^L+\sum_{t=2}^{\ell} N_m^{(t)} +N_m^R$, hence we can write
\begin{align*}
\Esp{N(S_m)} = \Esp{N_m^L} + \sum_{t=2}^{\ell} \Esp{N_m^{(t)}}  + \Esp{N_m^R} \ll  4^{m(1+(1-2\alpha)\ell)}. 
\end{align*}
Markov inequality yields
\begin{eqnarray*}
\sum_{m\ge m_\epsilon} \mathbb P\left (N(S_m) >m^2\mathbb E(N(S_m))\right )\le \sum_{m \ge m_\epsilon} \frac 1{m^2}<\infty.
\end{eqnarray*}
Thus by the Borell-Cantelli Lemma with high probability we have 
\begin{equation*}
N(S_m) \ll m^2 \Esp{N(S_m)} \ll 4^{m(1+(1-2\alpha)\ell+o(1))} = 4^{m( 1 -\frac{\ell}{2\ell-1} - \ell \epsilon+o(1))}.
\end{equation*}
Using this and the estimate \eqref{lbound:|S_m|} we have with high probability 
$$ \frac{N(S_m)}{|S_m|} \ll 4^{m((1-\ell)\epsilon+o(1))} \to 0, \quad (m \to \infty),$$
since $\ell \ge 2$, which proves Lemma \ref{lem:obstructions-are-few} for the case $\Ll_{2,\ell}$.

\subsubsection{Proof of Lemma \ref{lem:obstructions-are-few} for the case of Hilbert cubes}
Any Hilbert cube $X$ of dimension $r$ can be written as $$X=x_0+\{0,x_1\}+\cdots +\{0,x_r\},\quad (x_1\le \cdots \le x_r).$$ 
Indeed if $X=L_1+\cdots+L_r$ with $L_j=\{a_j,b_j\}$, take $x_0=\sum_{j=1}^{r} a_j$ and $x_j=b_j-a_j,$ rearranging the indexes if needed to have $x_1\le \cdots \le x_r.$ In other words, $X=\{x_0+\sum_{i \in I} x_i \mid I \subset [r]\}$, where the indexes in the sum cover all subsets of the interval $[r]=\{1,\ldots,r\}.$

Suppose that a Hilbert cube $X$ is contained in $S$. By Lemma \ref{lem:tbd-2}, 
$S$ does not contain arithmetic progressions, hence $X$ is also free of them. Then Lemma \ref{lem:degen-sumset-contains-AP} implies that $X$ cannot be degenerated, that is to say all the possible sums that contribute to the sumset $X$ are distinct (see Definition \ref{degenerated-sumset}).

For any fixed choice of $x_0, x_1, \ldots, x_r $ with $x_1\le \cdots \le x_r $, the probability that the 
corresponding Hilbert cube $X=\{x_0+\sum_{i \in I} x_i \mid I \subset [r]\}$ is contained in the random infinite sequence $S$ is
\begin{align*}
\Pr{X \subset S} &= \Pr{\bigwedge_{I\subset [r]} (x_0+\sum_{i\in I}x_i \in S)}= \prod_{I\subset [r]}\Pr{x_0+\sum_{i\in I}x_i \in S}\\
&=\prod_{I\subset [r]} \left (x_0+\sum_{i\in I}x_i \right)^{-\alpha},
\end{align*}
because all the sums $x_0+\sum_{i\in I}x_i $ are distinct.  The indexes $I$ in the sum (and in the product) cover all subsets of the interval $[r]$, that is to say: $\emptyset$, and -for each $i=1,\ldots,r$- all the $2^{i-1}$ subsets of $[r]$ in which $i$ is the maximum. 
Note that having fixed $i$, for each $s$-uple $k_1, \cdots, k_s$ with $1 \le k_1 \le \cdots \le k_s \le i$ we have $$(x_0+x_{k_1}+\cdots+x_{k_s}+x_i)^{-\alpha} \le (x_0+x_i)^{-\alpha}.$$  Hence we can write
\begin{align*}
\Pr{X \subset S} &\le x_0^{-\alpha} \prod_{i=1}^r\prod_{\substack{I\subset [r],\\ \max I=i}} (x_0+x_i)^{-\alpha}\\
&\le x_0^{-\alpha} \prod_{i=1}^r  (x_0+x_i)^{-2^{i-1}\alpha}.
\end{align*}
It is convenient to write  $y_0=x_0$ and $y_i=x_0+x_i$. With this notation we have
\begin{equation} \label{eq:TBC1}
\Pr{X \subset S} \le  y_0^{-\alpha} \prod_{i=1}^r y_i^{-2^{i-1}\alpha}.
\end{equation} 
Note that if $y_r+x_1+\cdots+x_{r-1} = \max(X) \in S_m \subset [4^{m+2}, 4^{m+2}+4^m)$ then necessarily $y_r \le 4^{m+3}$. Hence by \eqref{eq:TBC1}
\begin{align*}
\mathbb E(N(S_m)) &\le \sum_{\substack{ X=\{x_0+\sum_{i \in I} x_i \mid I \subset [r]\} \\ x_1\le \cdots \le x_r, \; \max(X) \in S_m }} \Pr{X \subset S}\\ 
& \le\sum_{y_0\le y_1\le\cdots \le y_r\le 4^{m+3}} y_0^{-\alpha} \prod_{i=1}^r y_i^{-2^{i-1}\alpha} .
\end{align*}  Taking in account that $\alpha=\frac r{2^r-1}+\epsilon/2$ and that $t-2^{t}\alpha>-1$ for all $t\le r$ and $\epsilon $ small enough we can estimate $\mathbb E(N(S_m))$ as follows:
\begin{align*}
\mathbb E(N(S_m))&\ll \sum_{y_0\le\cdots \le y_r \le 4^{m+3}} y_0^{-\alpha}y_1^{-\alpha} \prod_{i=2}^r y_i^{-2^{i-1}\alpha}\\ &\ll \sum_{y_1\le\cdots \le y_r \le 4^{m+3}} y_1^{1-2\alpha} \cdot y_2^{-2\alpha} \prod_{i=3}^r y_i^{-2^{i-1}\alpha}\\&\ll \cdots \\
&\ll \sum_{y_t\le\cdots \le y_r \le 4^{m+3}} y_t^{t-2^{t}\alpha} \cdot y_{t+1}^{-2^t\alpha} \prod_{i=t+2}^r y_i^{-2^{i-1}\alpha}\\
&\ll \cdots \\
&\ll \sum_{ y_{r-1} \le y_r \le 4^{m+3}} y_{r-1}^{r-1-2^{r-1}\alpha} \cdot y_{r}^{-2^{r-1}\alpha}  \\
&\ll \sum_{y_r\le 4^{m+3}} y_r^{r-2^r\alpha}\\
&\ll (4^m)^{1+r-2^r\alpha}.
\end{align*}
Markov inequality yields
\begin{eqnarray*}
\sum_{m\ge m_\epsilon} \mathbb P \Big( N(S_m) >m^2\mathbb E(N(S_m)) \Big)\le \sum_{m \ge m_\epsilon} \frac 1{m^2}<\infty.
\end{eqnarray*}
Thus by the Borell-Cantelli Lemma with high probability we have 
\begin{align}
N(S_m)	&\ll m^2\Esp{N(S_m)} \ll 2^{m(1+r-2^r\alpha + o(1))} \notag \\
	&\ll 2^{m({1-\frac r{2^r-1}-2^{r-1}\epsilon} +o(1))}, \quad (m \to \infty) \label{ubound:N_m(S)}.
\end{align}
Combining \eqref{ubound:N_m(S)} with \eqref{lbound:|S_m|}  we have with high probability
$$ \frac{N(S_m)}{|S_m|} \ll 2^{m((1-2^{r-1})\epsilon + o(1))} \to 0, \quad (m \to \infty),$$
 which proves Lemma \ref{lem:obstructions-are-few} for the $\Ll_{2,\ldots,2}^{(r)}$ case (Hilbert cubes).

\todos

\end{document}